\newtheorem{theorem}{Theorem}[section]
\newtheorem{lemma}[theorem]{Lemma}
\newtheorem*{theorem*}{Theorem}
\newtheorem*{lemma*}{Lemma}
\newtheorem*{remark*}{Remark}
\newtheorem*{definition*}{Definition}
\newtheorem*{proposition*}{Proposition}
\newtheorem*{corollary*}{Corollary}
\numberwithin{equation}{section}
\newcommand{\real}{\mathbb{R}}
\def\qed{\,\unskip\kern 6pt \penalty 500
\raise -2pt\hbox{\vrule \vbox to8pt{\hrule width 6pt
\vfill\hrule}\vrule}\par}
\definecolor{darkblue}{rgb}{0.05, .05, .65}
\definecolor{darkgreen}{rgb}{0.1, .65, .1}
\definecolor{darkred}{rgb}{0.8,0,0}
\newcommand{\beqn}{\begin{equation}}
\newcommand{\eeqn}{\end{equation}}
\newcommand{\bear}{\begin{eqnarray}}
\newcommand{\eear}{\end{eqnarray}}
\newcommand{\bean}{\begin{eqnarray*}}
\newcommand{\eean}{\end{eqnarray*}}
\begin{document}

\title{\huge \bf Existence of blow-up self-similar solutions for the supercritical quasilinear reaction-diffusion equation}

\author{
\Large Razvan Gabriel Iagar\,\footnote{Departamento de Matem\'{a}tica
Aplicada, Ciencia e Ingenieria de los Materiales y Tecnologia
Electr\'onica, Universidad Rey Juan Carlos, M\'{o}stoles,
28933, Madrid, Spain, \textit{e-mail:} razvan.iagar@urjc.es},\\
[4pt] \Large Ariel S\'{a}nchez,\footnote{Departamento de Matem\'{a}tica
Aplicada, Ciencia e Ingenieria de los Materiales y Tecnologia
Electr\'onica, Universidad Rey Juan Carlos, M\'{o}stoles,
28933, Madrid, Spain, \textit{e-mail:} ariel.sanchez@urjc.es}\\
[4pt] }
\date{}
\maketitle

\begin{abstract}
We establish the existence of self-similar solutions presenting finite time blow-up to the quasilinear reaction-diffusion equation
$$
u_t=\Delta u^m + u^p,
$$
posed in dimension $N\geq3$, $m>1$. More precisely, we show that there is always at least one solution in backward self-similar form if $p>p_s=m(N+2)/(N-2)$. In particular, this establishes \emph{non-optimality of the Lepin critical exponent} introduced in \cite{Le90} in the semilinear case $m=1$ and extended for $m>1$ in \cite{GV97, GV02}, for the existence of self-similar blow-up solutions. We also prove that there are multiple solutions in the same range, provided $N$ is sufficiently large. This is in strong contrast with the semilinear case, where the Lepin critical exponent has been proved to be optimal.
\end{abstract}

\

\noindent {\bf Mathematics Subject Classification 2020:} 35B33, 35B36, 35B44, 35C06, 35K57.

\smallskip

\noindent {\bf Keywords and phrases:} quasilinear reaction-diffusion, finite time blow-up, critical exponents, Lepin exponent, self-similar solutions.

\section{Introduction}

The quasilinear reaction-diffusion equation 
\begin{equation}\label{eq1}
u_t=\Delta u^m+u^p, 
\end{equation}
posed for $(x,t)\in\real^N$ and for $p>m>1$, together with its semilinear analogous
\begin{equation}\label{eq1.semi}
u_t=\Delta u+u^p, \qquad p>1,
\end{equation} 
have been strongly investigated in the last decades. Their main feature is the competition between the effect of the diffusion term, which conserves the initial mass of a data $u_0\in L^1(\real^N)$ while spreading it in the whole space $\real^N$, and the influence of the reaction term, which leads to an increasing $L^1$-norm of the solutions with respect to time and with the phenomenon of possible formation of singularities in a finite time $T\in(0,\infty)$. The latter phenomenon is known as \emph{finite time blow-up}: more precisely, we say that a solution $u$ to either \eqref{eq1} or \eqref{eq1.semi} blows up at time $T\in(0,\infty)$ if $u(t)\in L^{\infty}(\real^N)$ for any $t\in(0,T)$, but $u(T)\not\in L^{\infty}(\real^N)$. The famous paper by Fujita \cite{Fu66} established a critical exponent $p_F$ for the equation \eqref{eq1.semi}, called nowadays \emph{the Fujita exponent}, with the following property: for any $p\in(1,p_F]$, any non-trivial solution presents finite time blow-up, while for $p>p_F$, there are also global solutions (that is, solutions $u$ to \eqref{eq1.semi} such that $u(t)\in L^{\infty}(\real^N)$ for any $t>0$). This exponent has been generalized later in \cite{GKMS80, Qi93, Ga94, Ka95} to \eqref{eq1}, where it has the explicit value
$$
p_F=m+\frac{2}{N},
$$
and similar properties with respect to finite time blow-up of solutions. 

The study of finer properties related to finite time blow-up of solutions led to a new critical exponent, called \emph{the Sobolev critical exponent}, with the expression (also valid for $m=1$)
\begin{equation}\label{pSobolev}
p_s=\left\{\begin{array}{ll}\frac{m(N+2)}{N-2}, & {\rm if} \ N\geq3, \\ +\infty, & {\rm if} \ N\in\{1,2\}.\end{array}\right.
\end{equation}
In particular, in the semilinear case $m=1$, starting from the well-known works by Giga and Kohn \cite{GK85, GK87} and employing the technique of energy estimates in backward self-similar variables (see also \cite[Sections 22-25]{QS}), it has been shown that the blow-up rate of solutions to Eq. \eqref{eq1.semi} for $p\in(1,p_S)$ is always given by
\begin{equation}\label{BUrate}
\|u(x,t)\|_{\infty}\sim K(T-t)^{-1/(p-1)}, \qquad {\rm as} \ t\to T,
\end{equation}
which stems from the ordinary differential equation $u_t=u^p$, while the behavior as $t\to T$ of a solution $u$ to Eq. \eqref{eq1.semi} in a neighborhood of a blow-up point $a$ of $u$ is also characterized as follows:
$$
\lim\limits_{t\to T}(T-t)^{1/(p-1)}u(a+y\sqrt{T-t},t)=\pm(p-1)^{-1/(p-1)}
$$
uniformly in sets $|y|\leq C$, $C>0$ arbitrary (see \cite[Theorem 25.1]{QS}). Finite time blow-up with the rate \eqref{BUrate} has been named \emph{blow-up of type I}, while a finite time blow-up with any different rate is known as \emph{blow-up of type II}. The previous results have been extended to solutions to Eq. \eqref{eq1} at first in \cite{Ga86}, see also \cite[Chapter IV]{S4} and references therein. In particular, we can affirm that finite time blow-up in the subcritical range $m<p<p_s$ is always of type I. 

Studying the supercritical case $p>p_s$ is a much more complex task, as new phenomena related to the blow-up behavior of solutions to both \eqref{eq1} and \eqref{eq1.semi} may appear, in particular, blow-up of type II might occur for any $p>p_s$, as established first by Herrero and Vel\'azquez in \cite{HV94} and later classified in \cite{MM09, MM11} in the semilinear case, or more recently in \cite{MS21} for an even more general equation involving a variable coefficient. On the other hand, it has been shown that \emph{continuation after blow-up} may hold true in this range, that is, a new class of solutions that blow up at the origin at $t=T$ but afterwards they become again finite and non-singular has been identified. Such solutions, established in \cite{GV97}, have been named \emph{peaking solutions}. Related to them, two new critical exponents have been introduced. On the one hand, the \emph{Joseph-Lundgren exponent}
\begin{equation}\label{pJL}
p_{JL}=\left\{\begin{array}{ll}m\left(1+\frac{4}{N-4-2\sqrt{N-1}}\right), & {\rm if} \ N\geq11,\\[1mm] +\infty, & {\rm if} \ N<11,\end{array}\right.
\end{equation}
appeared first in the study of related semilinear elliptic problems in \cite{JL73} (see also \cite[Chapter 9]{QS}). On the other hand, the more tedious \emph{Lepin exponent} given by
\begin{equation}\label{Lepin}
p_L=\left\{\begin{array}{ll}1+\frac{3m+\sqrt{(m-1)^2(N-10)^2+2(m-1)(5m-4)(N-10)+9m^2}}{N-10}, & {\rm if} \ N\geq11, \\[1mm] +\infty, & {\rm if} \ N<11,\end{array}\right.
\end{equation}
appeared for the first time in \cite{Le90} related to the semilinear equation \eqref{eq1.semi} and was extended in \cite{GV97} to \eqref{eq1}. In particular, it is established that, in radial symmetry, blow-up is still of type I for $p\in(p_s,p_{JL})$ for solutions to Eq. \eqref{eq1.semi}, see for example \cite[Theorem 23.10]{QS}. More features related to the two critical exponents $p_{JL}$ and $p_L$ will be understood at the level of backward self-similar solutions in the next paragraphs. 

\medskip 

\noindent \textbf{Backward self-similar solutions}. We introduce next our main object of interest in this work, that is, solutions in backward radially symmetric self-similar form to Eq. \eqref{eq1}. Such solutions have the particular form
\begin{equation}\label{SSS}
u(x,t)=(T-t)^{-1/(p-1)}f(\xi), \qquad \xi=|x|(T-t)^{\beta}, \qquad \beta=\frac{p-m}{2(p-1)}>0,
\end{equation}
and the function $f(\cdot)$ will be called the \emph{self-similar profile} of the solution $u$. Plugging the ansatz \eqref{SSS} into Eq. \eqref{eq1}, we find that the self-similar profiles solve the differential equation
\begin{equation}\label{SSODE}
(f^m)''(\xi)+\frac{N-1}{\xi}(f^m)'(\xi)-\frac{1}{p-1}f(\xi)-\frac{p-m}{2(p-1)}\xi f'(\xi)+f^p(\xi)=0,
\end{equation}
together with the initial conditions induced by the radial symmetry 
\begin{equation}\label{init}
f(0)=A>0, \qquad f'(0)=0.
\end{equation}
Since it has been noticed that blow-up of type I has a self-similar behavior, identifying and classifying backward self-similar solutions became a fundamental question in understanding how general solutions presenting finite time blow-up behave as $t\to T$. Regarding the semilinear case \eqref{eq1.semi}, this classification is nowadays completely known. Thus, it has been established in \cite{GK85} (see also \cite{FT00}, where the non-existence is extended to more general equations involving also a variable coefficient) that for $1<p<p_s$ there are no backward self-similar solutions to Eq. \eqref{eq1.semi}, except for the one with constant profile
\begin{equation}\label{const.sol}
U_{*}(x,t)=k_*(T-t)^{-1/(p-1)}, \quad k_*=\left(\frac{1}{p-1}\right)^{1/(p-1)}.
\end{equation}
This is in strong contrast with the quasilinear equation \eqref{eq1}. Indeed, the existence of at least one backward self-similar solution to Eq. \eqref{eq1} in the subcritical range $m<p<p_s$ has been established in \cite[Theorem 4, p. 197, Chapter IV]{S4}. In the limiting case $p=p_s$, some stationary solutions have been identified in \cite[Section 7]{GV97}, see also \cite[Section 9]{IMS23b}, such solutions being valid for any $m\geq1$. Entering the range $p>p_s$, a stationary solution presenting a vertical asymptote at $x=0$, namely
\begin{equation}\label{stat.sol}
U_s(x)=c_s|x|^{-2/(p-m)}, \qquad c_s=\left[\frac{2m[p(N-2)-mN]}{(p-m)^2}\right]^{1/(p-m)},
\end{equation}
plays a very important role in the analysis. Indeed, for $m=1$, Lepin in \cite{Le88, Le90} classified self-similar solutions to Eq. \eqref{eq1.semi} with respect to the number of intersections of their graph with $U_s$. It was thus proved that, for $p\in(p_s,p_{JL})$, there are infinitely many backward radially symmetric self-similar solutions, while for $p\in[p_{JL},p_L)$ there is at least one, and in fact the critical exponent $p_L$ has been identified as the maximal value of $p$ for which the solution of the linearized equation in a neighborhood of $U_s$ has at least three zeros, leading to at least one oscillation of the actual self-similar profile with respect to the profile $U_s$. This approach has been extended to self-similar solutions to Eq. \eqref{eq1} by Galaktionov and V\'azquez in \cite[Section 12]{GV97}, where the existence of infinitely many \emph{peaking solutions} for $p\in(p_s,p_{JL})$ and of at least one for $p\in[p_{JL},p_L)$ is proved. Moreover, the existence of self-similar solutions with complete blow-up is also established in \cite[Section 14]{GV97} provided $p\in[p_s,p_{JL})$. All the self-similar profiles of solutions in this paragraph decay as $\xi\to\infty$ with a rate 
\begin{equation}\label{decay}
f(\xi)\sim C\xi^{-2/(p-m)}, \qquad C>0 \ {\rm arbitrary}.
\end{equation}

Thus, a question concerning the \emph{maximality of the critical exponent} $p_L$ for the existence of backward self-similar solutions in the form \eqref{SSS} to either Eq. \eqref{eq1} or Eq. \eqref{eq1.semi} appeared in a natural way, since the number $p_L$ itself has been obtained as a maximal reaction exponent allowing for at least one oscillation of self-similar profiles with respect to the stationary profile $U_s$. In a series of papers \cite{Mi04, Mi09, Mi10}, Mizoguchi established this maximality in the semilinear case \eqref{eq1.semi}, proving that indeed, for $p\geq p_L$ there are no self-similar solutions in the form \eqref{SSS} with profiles satisfying \eqref{SSODE} and the symmetry condition \eqref{init}. With respect to the quasilinear case of Eq. \eqref{eq1}, this question has been discussed in \cite{GV02} and remained open. 

Thus, our main goal is to show that, strongly departing from the semilinear case, the exponent $p_L$ \emph{is not maximal} for Eq. \eqref{eq1}, by constructing self-similar solutions for any $p>p_L$, and even multiple ones provided dimension $N$ is sufficiently large. But let us describe in detail our main results in the next paragraph.

\medskip 

\noindent \textbf{Main results.} As we explained above, the main result of this work is to establish existence of self-similar solutions in the backward form \eqref{SSS} for large exponents $p$, overpassing the Lepin critical exponent given in \eqref{Lepin}. We state the precise result below, where, for the sake of completeness, we include any $p>m$.
\begin{theorem}\label{th.1}
Let $N\geq3$.

\medskip

(a) For any $p>m$, there exists at least one solution in self-similar form \eqref{SSS} to Eq. \eqref{eq1}, which moreover has a decreasing profile with local behavior \eqref{decay} for some $C>c_s$.

\medskip 

(b) Given any natural number $K\geq2$, for any $p\in(m,(Km-1)/(K-1))$ there exist at least $K$ different solutions in self-similar form \eqref{SSS} to Eq. \eqref{eq1}, each of them having a different number of local maximum points. In particular, for any dimension 
\begin{equation}\label{LepK}
N>\frac{2(8K^2m-3Km-4K^2-2K+1)}{(2K-1)(m-1)}, \qquad K\geq2,
\end{equation}
there are at least $K$ different solutions in the form \eqref{SSS} for any $p\in(p_L,(Km-1)/(K-1))$. The self-similar profiles of all these solutions decay as in \eqref{decay} as $\xi\to\infty$. 
\end{theorem}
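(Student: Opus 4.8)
The plan is to study the radial profile equation \eqref{SSODE}--\eqref{init} by a shooting argument in the parameter $A=f(0)>0$, organized as a phase-plane analysis: introducing variables built from quantities such as $\xi(f^m)'/f^m$ and $\xi^2f^{p-m}$, equation \eqref{SSODE} becomes an autonomous (three-dimensional) dynamical system in which $\xi=0$ is a critical point $P_0$, the singular steady state $U_s$ of \eqref{stat.sol} is a critical point $P_s$, and the generic far-field decay \eqref{decay} corresponds to a critical point (or limit set) $P_\gamma$; the self-similar solutions sought are precisely the orbits running from $P_0$ to $P_\gamma$. First I would check that the apparent singularity of \eqref{SSODE} at $\xi=0$ is removable once $f'(0)=0$ is imposed, so that for each $A>0$ a contraction argument yields a unique local solution $f(\cdot;A)$ on a maximal interval $[0,\xi^{*}(A))$, depending continuously on $A$ in $C^1_{\mathrm{loc}}$; these are exactly the orbits leaving $P_0$, now parametrized by $A$.

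Next I would establish a trichotomy for the behaviour of $f(\cdot;A)$ on $[0,\xi^{*}(A))$: exactly one of (i) $f(\cdot;A)$ \emph{vanishes}, $f(\xi_0;A)=0$ at some finite $\xi_0$ (whence a compactly supported profile, not of the form \eqref{decay}); (ii) $f(\cdot;A)$ \emph{escapes}, $f(\xi;A)\to+\infty$ as $\xi\to\xi^{*}(A)<\infty$; (iii) $f(\cdot;A)$ is \emph{good}: global, strictly positive, $f\to0$, and then an asymptotic analysis of \eqref{SSODE} --- in the tail the two self-similar terms dominate the diffusion and, since $\beta\cdot\frac{2}{p-m}=\frac{1}{p-1}$, balance at leading order --- forces $f(\xi;A)\sim C_A\,\xi^{-2/(p-m)}$ for some $C_A>0$, i.e.\ \eqref{decay}, the borderline $f\to k_*$ occurring only at $A=k_*$. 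I would then show that the vanishing set $\mathcal V$ and the escape set $\mathcal E$ are open in $(0,\infty)$, so that $\mathcal G:=(0,\infty)\setminus(\mathcal V\cup\mathcal E)$ is closed and every $A\in\mathcal G$ yields a genuine self-similar solution obeying \eqref{decay}. Because $U_s(\xi)\to+\infty$ as $\xi\to0^{+}$, every regular profile lies below $U_s$ near the origin, so comparison shows that a good profile has $C_A>c_s$ precisely when it meets $U_s$ an odd number of times; exhibiting such a profile that is moreover decreasing --- which the analysis of the next paragraph furnishes for every $p>m$ --- establishes part (a).

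The core of part (b), and the origin of the exponent $(Km-1)/(K-1)$, is a linearization near the constant solution \eqref{const.sol}. For $A$ close to $k_*$, writing $f=k_*+v$ in \eqref{SSODE} and using $k_*^{p-1}=1/(p-1)$, the linearized equation is
\begin{equation*}
v''+\frac{N-1}{\xi}\,v'-\frac{\beta}{m\,k_*^{m-1}}\,\xi\,v'+\frac{1}{m\,k_*^{m-1}}\,v=0,
\end{equation*}
which, after the substitution $s=\frac{\beta}{2m\,k_*^{m-1}}\,\xi^{2}$, becomes Kummer's equation $s\,v''+\bigl(\tfrac{N}{2}-s\bigr)v'+\tfrac{1}{2}\,n\,v=0$ with $n=1/\beta=2(p-1)/(p-m)$; its solution regular at $\xi=0$ is the confluent hypergeometric function $M(-n/2,N/2,s)$, which possesses $\lceil n/2\rceil$ positive zeros (a generalized Laguerre polynomial with $n/2$ zeros when $n/2\in\NN$). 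Since the hypothesis $p<(Km-1)/(K-1)$ is exactly equivalent to $n/2=(p-1)/(p-m)>K$, this solution changes sign more than $K$ times; transferring this oscillation to the nonlinear profile, one gets that for $A$ close enough to $k_*$ the function $f(\cdot;A)$ oscillates about the level $k_*$ and picks up at least $K$ local maxima while still lying in $\mathcal G$. At the opposite end, the rescaling $f=A\,g\bigl(A^{(p-m)/2}\xi\bigr)$ reduces \eqref{SSODE} near $\xi=0$, as $A\to\infty$, to the Lane--Emden equation $\Delta g^m+g^p=0$, i.e.\ $\Delta(g^m)+(g^m)^{p/m}=0$, which is Sobolev-supercritical precisely when $p>p_s$; this pins down the behaviour of $f(\cdot;A)$ for large $A$ --- escape or vanishing, or a near-monotone good profile according to how $p$ sits relative to $p_s$ and $p_{JL}$ --- and in particular provides the decreasing profile invoked in part (a).

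Finally I would close with a continuity and connectedness argument. On $\mathcal G$ the number $M(A)$ of local maximum points of $f(\cdot;A)$ is finite (for the profiles in play, which remain above $U_s$ and make only finitely many oscillations before settling into \eqref{decay}; the exceptional orbits converging to $U_s$ itself --- spiralling when $p<p_{JL}$ --- are set aside) and locally constant, a change in $M$ along $\mathcal G$ forcing passage through a degenerate profile that lies outside $\mathcal G$. Combining this with the endpoint information ($M$ arbitrarily large for $A\to k_*$ within $\mathcal G$; $M$ small, or $A\notin\mathcal G$, for $A$ large) and an intermediate-value argument for the integer-valued $M$ along $\mathcal G$ produces, for each $j\in\{1,\dots,K\}$, some $A_j\in\mathcal G$ with $M(A_j)=j$, hence $K$ distinct self-similar solutions with distinct numbers of maxima, all decaying as in \eqref{decay}. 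The dimension bound \eqref{LepK} is then pure algebra: it is precisely --- after simplifying \eqref{Lepin} --- the inequality $p_L<(Km-1)/(K-1)$, so $(p_L,(Km-1)/(K-1))\subset(m,(Km-1)/(K-1))$ is non-empty and the last assertion follows. The main obstacle lies in the middle of this scheme: proving the trichotomy rigorously --- openness of $\mathcal V$ and $\mathcal E$, and the fact that a trapped orbit genuinely converges to $P_\gamma$ with the sharp rate \eqref{decay}, the slow tail (where the self-similar terms only barely dominate the diffusion) being particularly delicate --- and, above all, transferring the sign-change count of the Kummer solution faithfully into the count of local maxima of the nonlinear profile and showing that every value $1,\dots,K$ is actually attained \emph{on} $\mathcal G$; it is this coupling that produces the precise exponent $(Km-1)/(K-1)$, and it is where the bulk of the work resides.
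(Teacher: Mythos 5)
Your skeleton overlaps with the paper in two places: the linearization of \eqref{SSODE} around the constant profile $k_*$ leading to a Kummer/confluent-hypergeometric zero count, whose threshold $(p-1)/(p-m)>K$ is indeed equivalent to $p<(Km-1)/(K-1)$ (this is exactly the oscillation source, taken from \cite[pp. 191--193]{S4}, that the paper also uses), and the purely algebraic derivation of \eqref{LepK} from $p_L(N)<(Km-1)/(K-1)$. But there is a genuine gap at the decisive step, the one that makes the theorem surprising. Your argument needs the shooting to have a non-trivial ``bad'' side (vanishing/escape, the analogue of the paper's sets $\mathcal{A}$, $\mathcal{A}_K$) for \emph{every} $p>p_s$, and in particular for $p\geq p_L$, and you also need a decreasing good profile for part (a); you attribute both to a large-$A$ Lane--Emden rescaling and to ``how $p$ sits relative to $p_s$ and $p_{JL}$''. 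That cannot work as stated: the same large-$A$/intersection-with-$U_s$ reasoning applies verbatim to $m=1$, where Mizoguchi proved that \emph{no} such profiles exist for $p\geq p_L$, so any correct proof must use $m>1$ in an essential, structural way. The paper does this through the three-dimensional phase space \eqref{PSsyst}: the limiting orbit $l_\infty$ of the regular family lies in the invariant plane $\{X=0\}$ and connects $P_0$ to $P_2$ (Lemma \ref{lem.X0}), and the non-emptiness of $\mathcal{A}$ is obtained by contradiction via a chain of tubular neighborhoods of the orbits $Q_2\to P_0\to P_2\to Q_1$, a backward shooting from the center manifold of $Q_1$, and Lemma \ref{lem.crossP2}, which converts Lepin's exactly-two-zeros lemma for $p\geq p_L$ (linearization around $U_s$) into the statement that orbits entering $Q_1$ with $Z\to k$ close to $Z_0$ must cross $\{Y=-2/(p-m)\}$. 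None of this mechanism, nor any substitute for it, appears in your proposal; ``escape or vanishing \dots according to how $p$ sits relative to $p_s$ and $p_{JL}$'' is precisely the classical picture that stops at $p_L$.

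Two further steps are asserted rather than proved. First, the multiplicity argument: you claim the number $M(A)$ of local maxima is locally constant on $\mathcal{G}$ and then apply an intermediate-value argument ``along $\mathcal{G}$''; but $\mathcal{G}$ is only closed, not connected, $M$ need not be locally constant there, and nothing forces every intermediate value of $M$ to be attained. The paper instead runs an induction with nested open sets $\mathcal{A}_K,\mathcal{C}_K$ and their complements $\mathcal{B}_K$, pinning down $\sup\mathcal{C}_{K-1}$-type parameters and using the global Lemmas \ref{lem.tang}, \ref{lem.maxmin}, \ref{lem.globalQ3}, \ref{lem.globalQ1} to force the selected orbit to connect to $Q_1$ with exactly the prescribed number of oscillations. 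Second, your trichotomy (iii) --- every global positive profile tending to $0$ obeys \eqref{decay} --- and the claim $C_A>c_s$ ``iff the profile meets $U_s$ an odd number of times'' are nontrivial; the paper excludes the other conceivable asymptotics by the analysis at infinity (Lemmas \ref{lem.Qg}, \ref{lem.Q23}, \ref{lem.Q4}) and obtains $C>c_s$ from the center-manifold expansion \eqref{cmf} at $Q_1$ ($Z\to k>Z_0$), not from an intersection count. As it stands, the proposal would prove at best the already-known range $p\leq p_L$ and does not reach the paper's main assertion.
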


We mention here that, in the rest of the paper, we will \emph{only focus on exponents $p>p_s$}, since for $m<p\leq p_s$ the existence part of the result of Theorem \ref{th.1} is already known by \cite{GV97, S4}, while the multiplicity will be established in a more general framework in the future work \cite{IS24}.

Observe that the Lepin exponent $p_L$ (as well as the other critical exponents introduced above, $p_s$ and $p_{JL}$) can be regarded as a function of the dimension $N$. Thus, the first point that made us suspect that there can be self-similar solutions for $p>p_L$ was to notice that $p_L\to m$ as $N\to\infty$, and we have shown in our previous work \cite{IS22} that, for $p=m$, both existence and multiplicity of solutions hold true. Moreover, the mere deduction of the Lepin critical exponent is based on counting the number of oscillations of a profile with respect to the stationary solution $U_s$ (which counts as a profile itself). But it has been noticed in \cite[Chapter IV, pp. 191-193]{S4} that, for $m>1$, there is a new source of oscillations in the differential equation \eqref{SSODE}, with respect to the (constant) profile of the solution $U_{*}$, and we will exploit this fact in the proof of Theorem \ref{th.1}.

With respect to the technique in the proofs, instead of performing a shooting method directly in the equation \eqref{SSODE}, our approach is based on transforming this differential equation into an autonomous dynamical system and then perform a shooting on a two-dimensional unstable manifold (seen as a one-parameter family) in the phase space of this dynamical system. This transformation is performed in Section \ref{sec.syst}, where all the possible local behaviors of self-similar profiles $f(\xi)$ are classified and a number of preparatory lemmas concerning the extremal orbits of the manifolds involved in the analysis are states and proved. The proof of Theorem \ref{th.1} is then given in Section \ref{sec.main}, together with some numerical experiments supporting and helping to visualize the proof, see Figures \ref{fig1} and \ref{fig2}.

\section{A dynamical system. Analysis of critical points}\label{sec.syst}

Assume from now on that $N\geq3$ and $p>p_s$. Starting from generic profiles $f$ of self-similar solutions to Eq. \eqref{eq1} in the form \eqref{SSS}, we define the following new variables:
\begin{equation}\label{PSchange}
X(\eta)=\frac{1}{m(p-1)}\xi^2f^{1-m}(\xi), \qquad Y(\eta)=\frac{\xi f'(\xi)}{f(\xi)}, \qquad Z(\eta)=\frac{1}{m}\xi^{2}f^{p-m}(\xi),
\end{equation}
together with the independent variable $\eta=\ln\,\xi$. We deduce by direct calculation that \eqref{SSODE} is mapped by \eqref{PSchange} into the system
\begin{equation}\label{PSsyst}
\left\{\begin{array}{ll}\dot{X}=X(2-(m-1)Y), \\ \dot{Y}=X-(N-2)Y-Z-mY^2+\frac{p-m}{2}XY, \\ \dot{Z}=Z(2+(p-m)Y).\end{array}\right.
\end{equation}
Since we are looking only for non-negative self-similar solutions, we work from now on in the quadrant $X\geq0$, $Z\geq0$, observing that the planes $\{X=0\}$ and $\{Z=0\}$ are invariant. Thus, $Y$ is the only variable that might change sign, and according to its definition in \eqref{PSchange}, such a change of sign corresponds to a critical point of the profile $f(\xi)$. The main idea leading us to the proof of Theorem \ref{th.1} is to establish the existence of some specific trajectories of the system \eqref{PSsyst}, selected according to the local behaviors at both ends $\xi\to0$ and $\xi\to\infty$ when undoing the transformation \eqref{PSchange}. The system \eqref{PSsyst} has four critical points
\begin{equation*}
\begin{split}
P_0=(0,0,0), \ &P_1=\left(0,-\frac{N-2}{m},0\right), \ P_2=\left(0,-\frac{2}{p-m},\frac{2[p(N-2)-mN]}{(p-m)^2}\right),\\
&P_3=\left(\frac{2(mN-N+2)}{(m-1)(p-1)},\frac{2}{m-1},0\right),
\end{split}
\end{equation*}

\subsection{Local analysis}\label{subsec.local} 

We analyze in this section the critical points listed above, according to their stability.
\begin{lemma}[Local analysis near $P_0$]\label{lem.P0}
The critical point $P_0$ is a saddle point, presenting a two-dimensional unstable manifold and a one-dimensional stable manifold. The profiles corresponding to the orbits contained in the unstable manifold have the local behavior as in \eqref{init}, while the stable manifold is fully contained in the invariant $Y$ axis.
\end{lemma}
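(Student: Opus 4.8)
To analyze $P_0=(0,0,0)$ I would compute the Jacobian of the right-hand side of \eqref{PSsyst} at the origin. Since each of $\dot X$ and $\dot Z$ is a product with $X$ (resp. $Z$) as a factor, and $\dot Y$ has linear part $X-(N-2)Y-Z$, the linearization at $P_0$ is block-lower-triangular in a suitable ordering. I expect the Jacobian to be
\[
J(P_0)=\begin{pmatrix} 2 & 0 & 0 \\ 1 & -(N-2) & -1 \\ 0 & 0 & 2 \end{pmatrix},
\]
whose eigenvalues are $2$ (with multiplicity two, from the $X$- and $Z$-directions) and $-(N-2)<0$ (from the $Y$-direction, using $N\geq3$). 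Hence $P_0$ has a two-dimensional unstable manifold tangent to the span of the eigenvectors for the eigenvalue $2$ and a one-dimensional stable manifold tangent to the $Y$-axis; this already gives the saddle structure claimed.

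**Stable manifold lies in the $Y$-axis.** The key observation is that the $Y$-axis $\{X=0,\ Z=0\}$ is invariant for \eqref{PSsyst} — it is the intersection of the two invariant planes $\{X=0\}$ and $\{Z=0\}$ noted in the text — and on it the dynamics reduces to $\dot Y=-(N-2)Y-mY^2$, so $Y=0$ is a stable fixed point of this one-dimensional flow (linearization $-(N-2)<0$). Therefore the $Y$-axis near $P_0$ is a one-dimensional invariant manifold on which $P_0$ attracts, i.e. it \emph{is} the (local) stable manifold, by uniqueness of the stable manifold. This disposes of the second half of the claim.

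**Local behavior of profiles on the unstable manifold.** Orbits in the unstable manifold emanate from $P_0$ as $\eta\to-\infty$, that is, as $\xi=e^\eta\to0$. Along such an orbit $X\to0$ and $Z\to0$ while $Y\to0$; translating through \eqref{PSchange}, $Y=\xi f'(\xi)/f(\xi)\to0$ forces $f(\xi)$ to tend to a finite positive limit $A>0$ and, more precisely, writing the orbit in the eigen-coordinates one finds $X\sim c_1\xi^2$, $Z\sim c_2\xi^2$, hence $f(\xi)=A+o(1)$ with $f'(\xi)=Y f/\xi\to 0$; a slightly finer expansion (matching $\dot Y$ to order $\xi^2$) gives $f(\xi)=A+O(\xi^2)$ and $f'(0)=0$, which is exactly the symmetric initial condition \eqref{init}. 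Conversely any profile satisfying \eqref{init} produces, via \eqref{PSchange}, an orbit entering $P_0$ backward in $\eta$, so the correspondence is exact.

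**Main obstacle.** The linear analysis and the invariance argument are routine; the only delicate point is the rigorous passage from the phase-space behavior $(X,Y,Z)\to(0,0,0)$ back to the asymptotics $f(\xi)\to A>0$, $f'(0)=0$ near $\xi=0$, since the change of variables \eqref{PSchange} is singular at $\xi=0$ (it involves $\xi^2 f^{1-m}$ and $\ln\xi$). One must check that the limit $A=\lim_{\xi\to0}f(\xi)$ is genuinely finite and strictly positive — ruling out $A=0$ or $A=+\infty$ — which uses that on the unstable manifold $X\sim c\,\xi^2$ with $c$ finite and nonzero, equivalently $f^{1-m}(\xi)$ stays bounded away from $0$ and $\infty$; this is where the eigenvalue $2$ (rather than something degenerate) and the explicit form of the unstable eigenvectors do the work. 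I would isolate this as a short sub-lemma or a direct computation rather than leave it implicit.
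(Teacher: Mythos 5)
Your proposal is correct and follows essentially the same route as the paper: the same linearization with eigenvalues $2,2,-(N-2)$, the same invariance-plus-uniqueness argument placing the stable manifold in the $Y$-axis, and the same asymptotic inversion of \eqref{PSchange} on the unstable manifold (where $X\sim c_1\xi^2$, $Z\sim c_2\xi^2$ with $c_1,c_2>0$ pins down $f(0)=A>0$, $f'(0)=0$). The paper merely goes one step further, organizing the unstable orbits into the explicit one-parameter family $(l_C)$ via $C=\lim Z/X$ and recording the refined expansion \eqref{beh.P0}, which is then used later in the shooting argument.
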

\begin{proof}
The linearization of the system \eqref{PSsyst} in a neighborhood of the point $P_0$ has the matrix
$$
M(P_0)=\left(
         \begin{array}{ccc}
           2 & 0 & 0 \\
           1 & -(N-2) & -1 \\
           0 & 0 & 2 \\
         \end{array}
       \right),
$$
with two positive eigenvalues $\lambda_1=\lambda_3=2$ and one negative eigenvalue $\lambda_2=-(N-2)$ and corresponding eigenvectors
\begin{equation}\label{eigen.P0}
e_1=(N,1,0), \qquad e_2=(0,1,0), \qquad e_3=(0,1,-N).
\end{equation} 
Since the eigenvector $e_2$ is contained in the $Y$-axis, which is invariant for the system \eqref{PSsyst}, the uniqueness of the stable manifold \cite[Theorem 3.2.1]{GH} implies that it is contained completely in this $Y$-axis. On the contrary, the unstable manifold is tangent to the plane spanned by the eigenvectors $e_1$ and $e_3$. We next wish to undo the change of variable \eqref{PSchange} on the unstable manifold. Remark that the first and third equations of the system \eqref{PSsyst} give
\begin{equation}\label{interm1}
Z(\eta)\sim CX(\eta), \qquad C\geq0, \qquad {\rm as} \ \eta\to-\infty,
\end{equation}
Plugging \eqref{interm1} into the second equation of the system \eqref{PSsyst}, neglecting the quadratic terms and observing that, at least in a neighborhood of $P_0$, one can invert $X(\eta)$ and thus define $Y$ as a function of $X$, we get by integration the family of orbits
\begin{equation}\label{interm2}
(l_C): \quad Y(\eta)\sim\frac{X(\eta)(1-C)}{N}, \qquad {\rm as} \ \eta\to-\infty,
\end{equation}
where $C$ is the non-negative constant coming from \eqref{interm1}. We will use from now on the notation $(l_C)_{C>0}$ to index as a one-parameter family the orbits on the unstable manifold of $P_0$, together with the limiting orbits $l_0$ contained in the invariant plane $\{Z=0\}$ and tangent to $e_1$, respectively $l_{\infty}$ contained in the invariant plane $\{X=0\}$ and tangent to $e_3$. Undoing the change of variable \eqref{PSchange}, we infer on the one hand from \eqref{interm1} that the orbits $l_C$ contain profiles with $f(0)=(C/(p-1))^{1/(p-1)}>0$ for $C>0$. On the other hand, \eqref{interm2} gives the more precise local behavior 
\begin{equation}\label{beh.P0}
f(\xi)\sim\left[\left(\frac{C}{p-1}\right)^{(m-1)/(p-1)}+\frac{(1-C)(m-1)}{2mN(p-1)}\xi^2\right]^{1/(m-1)}, \qquad {\rm as} \ \xi\to0.
\end{equation}
\end{proof}

\noindent \textbf{Remark.} We observe that the orbits $l_C$ with $C\in(0,1)$ go out of $P_0$ into the positive half-space $\{Y>0\}$ and the corresponding profiles in \eqref{beh.P0} are increasing in a right neighborhood of $\xi=0$. On the contrary, the orbits $l_C$ with $C>1$ go out into the negative half-space $\{Y<0\}$ and the corresponding profiles in \eqref{beh.P0} are decreasing in a right neighborhood of $\xi=0$. The orbit $l_1$, that is, $X(\eta)=Z(\eta)$ for any $\eta\in\real$, corresponds to the constant profile of the explicit solution $U_{*}$ defined in \eqref{const.sol}.

\begin{lemma}[Local analysis near $P_1$]\label{lem.P1}
The critical point $P_1$ is a saddle point with a two-dimensional unstable manifold fully contained in the invariant plane $\{Z=0\}$ and a one-dimensional stable manifold fully contained in the invariant plane $\{X=0\}$. There are no profiles contained in these orbits.
\end{lemma}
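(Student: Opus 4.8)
The plan is to follow closely the strategy of Lemma~\ref{lem.P0}: linearize the system \eqref{PSsyst} at the point $P_1=(0,-(N-2)/m,0)$, read off the structure of the invariant manifolds, and exploit the invariance of the coordinate planes $\{X=0\}$ and $\{Z=0\}$, both of which contain $P_1$. A direct computation gives the linearization matrix
\begin{equation*}
M(P_1)=\left(
\begin{array}{ccc}
\dfrac{N(m-1)+2}{m} & 0 & 0 \\[2mm]
1-\dfrac{(p-m)(N-2)}{2m} & N-2 & -1 \\[2mm]
0 & 0 & \dfrac{2m-(p-m)(N-2)}{m}
\end{array}
\right),
\end{equation*}
which is block triangular, hence has eigenvalues
\begin{equation*}
\mu_1=\frac{N(m-1)+2}{m}, \qquad \mu_2=N-2, \qquad \mu_3=\frac{2m-(p-m)(N-2)}{m}.
\end{equation*}
For $N\geq3$ and $m>1$ one has $\mu_1>0$ and $\mu_2>0$ immediately. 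The only place where the standing hypothesis $p>p_s$ is really used is the sign of $\mu_3$: since $p>p_s$ gives $p-m>4m/(N-2)$, that is $(p-m)(N-2)>4m$, we deduce $\mu_3<(2m-4m)/m=-2<0$. Hence $P_1$ is a saddle with a two-dimensional unstable and a one-dimensional stable manifold.

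To locate these manifolds I would argue exactly as in the proof of Lemma~\ref{lem.P0}. The eigenvector associated with $\mu_1$ and the one associated with $\mu_2$ (or, in the degenerate subcase $N=2(m+1)$ where $\mu_1=\mu_2$, the corresponding two-dimensional generalized eigenspace) both lie in the $(X,Y)$-plane $\{Z=0\}$, which is invariant for \eqref{PSsyst}; by uniqueness of the unstable manifold \cite[Theorem~3.2.1]{GH}, the whole unstable manifold of $P_1$ is therefore contained in $\{Z=0\}$. Equivalently, the restriction of \eqref{PSsyst} to the invariant plane $\{Z=0\}$ has at $P_1$ the two positive eigenvalues $\mu_1,\mu_2$, so $P_1$ is an unstable node inside that plane, and a dimension count identifies this planar unstable manifold with the full one. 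Symmetrically, the eigenvector associated with the simple negative eigenvalue $\mu_3$ has vanishing first component, so it lies in the invariant plane $\{X=0\}$, and by the same uniqueness principle the one-dimensional stable manifold of $P_1$ is contained in $\{X=0\}$.

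Finally, I would check that no self-similar profile corresponds to an orbit lying on either manifold. An orbit inside the unstable manifold has $Z(\eta)\equiv0$; since $Z=\frac1m\xi^2f^{p-m}(\xi)$, this forces $f\equiv0$ for $\xi>0$, i.e.\ only the trivial solution. An orbit inside the stable manifold has $X(\eta)\equiv0$; since $X=\frac1{m(p-1)}\xi^2f^{1-m}(\xi)$ and $1-m<0$, this forces $f\equiv+\infty$, again not an admissible profile of a solution in the form \eqref{SSS}. I expect the only genuinely nontrivial step to be the sign of $\mu_3$, governed by $p>p_s$; the eigenvalue coincidence $\mu_1=\mu_2$ at $N=2(m+1)$ is harmless, since both values remain positive and their generalized eigenspace still lies in $\{Z=0\}$.
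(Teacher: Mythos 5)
Your proof is correct and takes essentially the same approach as the paper: compute the linearization at $P_1$, get two positive eigenvalues and a negative third one from $p>p_s$, locate the corresponding eigenvectors in the invariant planes $\{Z=0\}$ and $\{X=0\}$, and conclude by uniqueness of the stable and unstable manifolds. Your additional remarks (the harmless eigenvalue coincidence at $N=2(m+1)$ and the explicit reason why orbits with $Z\equiv0$ or $X\equiv0$ carry no admissible profiles) are correct refinements that the paper leaves implicit.
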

\begin{proof}
The linearization of the system \eqref{PSsyst} in a neighborhood of $P_1$ has the matrix
$$
M(P_1)=\left(
         \begin{array}{ccc}
           \frac{mN-N+2}{m} & 0 & 0 \\[1mm]
           \frac{mN-p(N-2)}{2m} & N-2 & -1 \\[1mm]
           0 & 0 & \frac{mN-p(N-2)}{m} \\
         \end{array}
       \right),
$$
with eigenvalues
$$
\lambda_1=\frac{mN-N+2}{m}>0, \qquad \lambda_2=N-2>0, \qquad \lambda_3=\frac{mN-p(N-2)}{m}<0,
$$
the latter being in force since $p>p_s$. It is then immediate to see that the eigenvectors corresponding to the eigenvalues $\lambda_1$ and $\lambda_2$ are both contained in the plane $\{Z=0\}$ (the second lying actually on the $Y$ axis), while the eigenvector corresponding to $\lambda_3$ is contained in the plane $\{X=0\}$. The invariance of these two planes for the system \eqref{PSsyst}, together with the uniqueness of the stable and unstable manifolds of a hyperbolic point \cite[Theorem 3.2.1]{GH} complete the proof.
\end{proof}
When analyzing the critical point $P_2$, we notice that the Joseph-Lundgren exponent $p_{JL}$ comes into play as follows.
\begin{lemma}[Local analysis near $P_2$]\label{lem.P2}
The critical point $P_2$ is a saddle-focus for $p\in(p_s,p_{JL})$ and a saddle point for $p\geq p_{JL}$, having in both cases a two-dimensional stable manifold fully contained in the invariant plane $\{X=0\}$ and a one-dimensional unstable manifold. The unique orbit contained in the unstable manifold corresponds to the stationary solution \eqref{stat.sol}.
\end{lemma}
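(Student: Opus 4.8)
The plan is to compute the linearization of the system \eqref{PSsyst} at the point $P_2=(0,-2/(p-m),Z_2)$ with $Z_2=2[p(N-2)-mN]/(p-m)^2$, exploit the invariance of the plane $\{X=0\}$ to identify the stable manifold, and then study the sign of the discriminant of the remaining two-dimensional block to distinguish the saddle-focus from the saddle regime. First I would write down $M(P_2)$. Because $P_2$ lies on $\{X=0\}$ and this plane is invariant, the $X$-row of $M(P_2)$ decouples: the first equation of \eqref{PSsyst} gives the eigenvalue $\lambda_X = 2-(m-1)Y|_{P_2}=2+2(m-1)/(p-m)=2(p-1)/(p-m)>0$ with eigenvector transverse to $\{X=0\}$. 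This is the unstable direction. The remaining $2\times2$ block, obtained from the $Y$- and $Z$-equations restricted to $\{X=0\}$, governs the dynamics inside the invariant plane; I would show both its eigenvalues have negative real part, so together with $\lambda_X>0$ we get a one-dimensional unstable manifold (necessarily the $X$-transverse direction, hence an actual orbit leaving $P_2$) and a two-dimensional stable manifold contained in $\{X=0\}$.

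Next I would analyze the $2\times2$ block in the $(Y,Z)$ variables at $P_2$. Linearizing $\dot Y = -(N-2)Y - Z - mY^2$ and $\dot Z = Z(2+(p-m)Y)$ at $Y=-2/(p-m)$, $Z=Z_2$, the block is
\begin{equation*}
B = \begin{pmatrix} -(N-2) - 2mY_2 & -1 \\ (p-m)Z_2 & 0 \end{pmatrix}, \qquad Y_2=-\frac{2}{p-m},
\end{equation*}
so $-(N-2)-2mY_2 = -(N-2)+4m/(p-m)$ and the trace is $\mathrm{tr}\,B = -(N-2)+4m/(p-m)$ while the determinant is $\det B = (p-m)Z_2 = 2[p(N-2)-mN]/(p-m)>0$, positive precisely because $p>p_s$. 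I would check that $\mathrm{tr}\,B<0$ for $p>p_s$ (equivalently $(N-2)(p-m)>4m$, which after simplification should follow from $p>p_s=m(N+2)/(N-2)$; this is the one routine inequality to verify carefully). Given $\mathrm{tr}\,B<0$ and $\det B>0$, both eigenvalues of $B$ have negative real part, confirming the stable/unstable manifold dimensions. The discriminant is $\Delta_B = (\mathrm{tr}\,B)^2 - 4\det B$; the point is a saddle-focus (complex eigenvalues, spiralling on $\{X=0\}$) when $\Delta_B<0$ and a genuine node/saddle-type point when $\Delta_B\geq0$. I would then show that the condition $\Delta_B=0$, rewritten as a relation between $p$ and $N$, is exactly $p=p_{JL}$ as defined in \eqref{pJL}, with $\Delta_B<0$ for $p\in(p_s,p_{JL})$ and $\Delta_B\geq0$ for $p\geq p_{JL}$.

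Finally, for the last assertion I would undo the change of variables \eqref{PSchange} along the unique unstable orbit. On $\{X=0\}$ one has $f^{1-m}\xi^2\to0$; the balance $Y\equiv -2/(p-m)$ forces $f(\xi)\sim c\,\xi^{-2/(p-m)}$ with the constant pinned down by $Z\equiv Z_2$, i.e. $\xi^2 f^{p-m}/m = Z_2$, which gives exactly $c = c_s$ as in \eqref{stat.sol}; hence the unstable orbit of $P_2$ corresponds to the stationary solution $U_s$, and uniqueness of the one-dimensional unstable manifold of a hyperbolic point \cite[Theorem 3.2.1]{GH} shows it is the only such orbit. The main obstacle I anticipate is the bookkeeping in the step identifying $\Delta_B=0$ with $p=p_{JL}$: the Joseph–Lundgren exponent involves $\sqrt{N-1}$, so one must carefully expand $(\mathrm{tr}\,B)^2=4\det B$ into a quadratic in $p$ and recognize its larger root as $p_{JL}$; matching the closed-form expression \eqref{pJL} is where an arithmetic slip is most likely, everything else being a direct eigenvalue computation plus invariance arguments already used in Lemmas \ref{lem.P0} and \ref{lem.P1}.
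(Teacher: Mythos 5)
Your proposal is correct and follows essentially the same route as the paper: the linearization at $P_2$ is block-diagonal, the transverse eigenvalue $2(p-1)/(p-m)>0$ gives the one-dimensional unstable manifold, the trace/determinant signs of the $(Y,Z)$-block (equivalent to the paper's $\lambda_2+\lambda_3<0$, $\lambda_2\lambda_3>0$ from $p>p_s$) place the two-dimensional stable manifold in the invariant plane $\{X=0\}$, the sign of the discriminant distinguishes saddle-focus from saddle with the vanishing locus at $p=p_{JL}$, and uniqueness of the unstable manifold identifies its orbit with $U_s$. The only step you defer, checking that the discriminant's relevant root is exactly $p_{JL}$, is also handled in the paper by a direct computation (it verifies $F(m,N,p_s)=-4(N-2)<0$ and $F(m,N,p_{JL})=0$ and uses the quadratic structure in $p$), so there is no substantive difference.
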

\begin{proof}
The linearization of the system \eqref{PSsyst} in a neighborhood of the point $P_2$ has the matrix
$$
M(P_2)=\left(
         \begin{array}{ccc}
           \frac{2(p-1)}{p-m} & 0 & 0 \\[1mm]
           0 & -\frac{p(N-2)-m(N+2)}{p-m} & -1 \\[1mm]
           0 & \frac{2(p(N-2)-mN)}{p-m} & 0 \\
         \end{array}
       \right),
$$
whose eigenvalues satisfy
$$
\lambda_1=\frac{L}{p-m}>0, \ \lambda_2+\lambda_3=-\frac{p(N-2)-m(N+2)}{p-m}, \ \lambda_2\lambda_3=\frac{2(p(N-2)-mN)}{p-m}.
$$
Since $p>p_s$, we notice that $p(N-2)>m(N+2)>mN$, whence $\lambda_2+\lambda_3<0$ and $\lambda_2\lambda_3>0$. Hence, either $\lambda_2$ and $\lambda_3$ are both negative real numbers, or they are conjugate complex numbers with negative real part. This distinction (leading to either a saddle-focus or a saddle point) depends on the sign of the expression
$$
F(m,N,p)=(\lambda_2+\lambda_3)^2-4\lambda_2\lambda_3=\frac{(N-2)(N-10)p^2-2m(N^2-8N+4)p+m^2(N-2)^2}{(p-m)^2}.
$$ 
One can readily check by direct computation that 
$$
F(m,N,p_s)=-4(N-2)<0, \qquad F(m,N,p_{JL})=0
$$
and that numerator of $F(m,N,p)$ is a second degree polynomial with respect to the variable $p$, with positive dominating coefficient for $N\geq11$ and roots $p_{JL}$ and another root smaller than $p_s$. This implies that $F(m,N,p)>0$ provided $p>p_{JL}$. We conclude that $P_2$ is a saddle-focus for $p<p_{JL}$ and a (hyperbolic) saddle for $p\geq p_{JL}$. Noticing that the matrix $M(P_2)$ is a "block-matrix", it is easy to infer that the stable two-dimensional manifold is contained in the invariant plane $\{X=0\}$. The one-dimensional unstable manifold is unique according to \cite[Theorem 3.2.1]{GH} and thus contains a single orbit, with the property that 
$$
Z(\eta)\to\frac{2[p(N-2)-mN]}{(p-m)^2}, \qquad {\rm as} \ \eta\to-\infty.
$$
Noticing that $U_s$ satisfies the above limit, the uniqueness of the orbit leads to the identification claimed in the statement.
\end{proof}
We are left with the critical point $P_3$. 
\begin{lemma}[Local analysis near $P_3$]\label{lem.P3}
The critical point $P_3$ is either a saddle-focus or a saddle point having a two-dimensional stable manifold fully included in the invariant plane $\{Z=0\}$ and a one-dimensional unstable manifold which consists of a single orbit. On the profiles contained in this orbit we have $f(0)=0$, with the local behavior
\begin{equation}\label{beh.P3}
f(\xi)\sim\left[\frac{m-1}{2m(mN-N+2)}\right]^{1/(m-1)}\xi^{2/(m-1)}, \qquad {\rm as} \ \xi\to0.
\end{equation}
\end{lemma}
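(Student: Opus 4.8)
The plan is to linearize the system \eqref{PSsyst} at $P_3$, read off the indices of the invariant manifolds from the structure of the linearization, and then undo the transformation \eqref{PSchange} along the unstable orbit. First I would compute the Jacobian $M(P_3)$. At $P_3$ one has $2-(m-1)Y=0$, so the first row of $M(P_3)$ is $(0,-(m-1)X_3,0)$ with $X_3=\frac{2(mN-N+2)}{(m-1)(p-1)}$, and since $Z=0$ the third row is $(0,0,\frac{2(p-1)}{m-1})$. Expanding the characteristic polynomial along this third row, the eigenvalue $\lambda_3=\frac{2(p-1)}{m-1}>0$ splits off, while the other two eigenvalues are those of the $2\times2$ block
\[
B=\begin{pmatrix} 0 & -\dfrac{2(mN-N+2)}{p-1}\\[2mm] \dfrac{p-1}{m-1} & a\end{pmatrix},\qquad a=\partial_Y\dot Y|_{P_3}=-\frac{2m}{m-1}-\frac{mN-N+2}{p-1},
\]
which is precisely $M(P_3)$ restricted to the invariant plane $\{Z=0\}$; the simplified form of $a$ is obtained by combining $-(N-2)-\tfrac{4m}{m-1}$ with $\tfrac{(p-m)(mN-N+2)}{(m-1)(p-1)}$. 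Since $m>1$, $p>1$ and $mN-N+2=N(m-1)+2>0$, we get $\operatorname{tr}B=a<0$ and $\det B=\frac{2(mN-N+2)}{m-1}>0$, hence both eigenvalues of $B$ have strictly negative real part; they are real and distinct or complex conjugate according to the sign of $a^2-\frac{8(mN-N+2)}{m-1}$, which is the saddle/saddle-focus alternative.

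Consequently $P_3$ has a one-dimensional unstable manifold and a two-dimensional stable manifold. Since $\{Z=0\}$ is invariant for \eqref{PSsyst} and carries a two-dimensional invariant manifold of the linearization tangent to the eigenspace of $B$, uniqueness of the stable manifold of a hyperbolic point \cite[Theorem 3.2.1]{GH} forces the stable manifold to lie in $\{Z=0\}$. The unstable manifold, unique by the same theorem, consists of a single orbit; its tangent eigenvector at $P_3$ must have a nonzero $Z$-component (if $v_3=0$ then, $\lambda_3$ not being an eigenvalue of $B$, we would be forced to $v_1=v_2=0$), so the orbit leaves $\{Z=0\}$, and we keep the half entering the physical region $\{Z>0\}$.

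It then remains to undo \eqref{PSchange} along this orbit as $\eta\to-\infty$, i.e. as $\xi\to0$. From $Y(\eta)\to\frac{2}{m-1}$ and $Y=\mathrm{d}(\ln f)/\mathrm{d}\eta$ (recall $\eta=\ln\xi$) we obtain $\ln f\sim\frac{2}{m-1}\eta$, that is $f(\xi)\sim c\,\xi^{2/(m-1)}$ with $c>0$, in particular $f(0)=0$. The constant is pinned down by $X(\eta)\to X_3$: since $\xi^2f^{1-m}\sim c^{1-m}$, the relation $\frac{c^{1-m}}{m(p-1)}=\frac{2(mN-N+2)}{(m-1)(p-1)}$ gives $c=\left[\frac{m-1}{2m(mN-N+2)}\right]^{1/(m-1)}$, which is \eqref{beh.P3}; one checks consistency with $Z_3=0$ via $Z=\frac1m\xi^2f^{p-m}\sim\frac{c^{p-m}}{m}\xi^{2(p-1)/(m-1)}\to0$.

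The only genuinely delicate point is establishing $\operatorname{tr}B=a<0$: this is what guarantees that the block $B$ is attracting, hence that the manifold sitting in $\{Z=0\}$ is the stable one and $P_3$ has the claimed index. Everything else is bookkeeping with \cite[Theorem 3.2.1]{GH} and the definitions \eqref{PSchange}, the one small algebraic step being the cancellation that reduces $a$ to $-\frac{2m}{m-1}-\frac{mN-N+2}{p-1}$.
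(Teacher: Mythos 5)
Your proposal is correct and follows essentially the same route as the paper: computing $M(P_3)$, noting that the two eigenvalues of the $\{Z=0\}$ block have negative real part (your trace/determinant computation agrees with the paper's $\lambda_1+\lambda_2<0$, $\lambda_1\lambda_2>0$), invoking invariance of $\{Z=0\}$ plus uniqueness of the invariant manifolds, and recovering \eqref{beh.P3} from $X(\eta)\to X(P_3)$ as $\eta\to-\infty$ via \eqref{PSchange}. The extra details you supply (the explicit simplification of $\partial_Y\dot Y|_{P_3}$, the nonzero $Z$-component of the unstable eigenvector, and the consistency check $Z\to0$) are fine but do not change the argument.
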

\begin{proof}
The linearization of the system \eqref{PSsyst} in a neighborhood of $P_3$ has the matrix
$$
M(P_3)=\left(
  \begin{array}{ccc}
    0 & -\frac{2(mN-N+2)}{p-1} & 0 \\[1mm]
    \frac{p-1}{m-1} & -\frac{N(m-1)^2+2(mp-1)}{(m-1)(p-1)} & -1 \\[1mm]
    0 & 0 & \frac{2(p-1)}{m-1} \\
  \end{array}
\right).
$$
The eigenvalues of it satisfy $\lambda_3=2(p-1)/(m-1)>0$ and
$$
\lambda_1\lambda_2=\frac{2(mN-N+2)}{m-1}>0, \qquad \lambda_1+\lambda_2=-\frac{N(m-1)^2+2(mp-1)}{(m-1)(p-1)}<0,
$$
hence the two eigenvalues $\lambda_1$ and $\lambda_2$ are either negative real numbers or complex conjugate numbers with negative real part. In both cases, the corresponding stable manifold, due to its uniqueness and the invariance of the plane $\{Z=0\}$, is fully contained in this plane. The unstable manifold of $P_3$ is one-dimensional and corresponds to the eigenvalue $\lambda_3$. On the unique trajectory of the system \eqref{PSsyst} contained in it, we have
$$
X(\eta)\to X(P_3):=\frac{2(mN-N+2)}{(m-1)(p-1)}, \qquad {\rm as} \ \eta\to-\infty,
$$
which readily gives \eqref{beh.P3} by undoing the transformation \eqref{PSchange}.
\end{proof}

\subsection{Analysis at infinity}\label{subsec.inf}

In order to complete the local analysis of the system \eqref{PSsyst}, we are left with studying its critical points at infinity. This is done by passing to the Poincar\'e hypersphere (according to, for example, \cite[Section 3.10]{Pe}) by setting
$$
X=\frac{\overline{X}}{W}, \qquad Y=\frac{\overline{Y}}{W}, \qquad Z=\frac{\overline{Z}}{W}.
$$
The critical points at infinity, expressed in these new variables, are thus found as the solutions of the system
\begin{equation}\label{Poincare}
\left\{\begin{array}{ll}\frac{1}{2}\overline{X}\overline{Y}[(p-m)\overline{X}-2\overline{Y}]=0,\\
(p-1)\overline{X}\overline{Z}\overline{Y}=0,\\
\frac{1}{2}\overline{Y}\overline{Z}[2p\overline{Y}-(p-m)\overline{X}]=0,\end{array}\right.
\end{equation}
to which the condition of belonging to the equator of the hypersphere, which gives $W=0$ and $\overline{X}^2+\overline{Y}^2+\overline{Z}^2=1$, has to be added, according to \cite[Theorem 4, Section 3.10]{Pe}. Since $\overline{X}\geq0$ and $\overline{Z}\geq0$, we readily find the following critical points at infinity:
\begin{equation}\label{crit.inf}
\begin{split}
&Q_1=(1,0,0,0), \ \ Q_{2,3}=(0,\pm1,0,0), \ \ Q_4=(0,0,1,0), \ \ Q_{\gamma}=\left(\gamma,0,\sqrt{1-\gamma^2},0\right),\\
&Q_5=\left(\frac{2}{\sqrt{4+(p-m)^2}},\frac{p-m}{\sqrt{4+(p-m)^2}},0,0\right),
\end{split}
\end{equation}
with $\gamma\in(0,1)$. In order to analyze the stability of the critical points whose $\overline{X}$ coordinate is nonzero, we project on the $X$ variable according to \cite[Theorem 5(a), Section 3.10]{Pe}, that is, performing the following change of variable
\begin{equation}\label{change2}
x=\frac{1}{X}, \qquad y=\frac{Y}{X}, \qquad z=\frac{Z}{X},
\end{equation}
in order to obtain the system 
\begin{equation}\label{PSinf1}
\left\{\begin{array}{ll}\dot{x}=x[(m-1)y-2x],\\
\dot{y}=-y^2+\frac{p-m}{2}y+x-Nxy-xz,\\
\dot{z}=(p-1)yz,\end{array}\right.
\end{equation}
where the derivatives are taken with respect to a new independent variable defined implicitly by the differential equation
\begin{equation}\label{indep2}
\frac{d\eta_1}{d\xi}=\frac{\alpha}{m}\xi f^{1-m}(\xi)=\frac{X(\xi)}{\xi}.
\end{equation} 
In the system \eqref{PSinf1}, the critical points $Q_1$, $Q_5$ and $Q_{\gamma}$ are identified as
$$
Q_1=(0,0,0), \ \ Q_5=\left(0,\frac{p-m}{2},0\right), \ \ Q_{\gamma}=(0,0,\kappa), \ \ {\rm with} \ \kappa=\frac{\sqrt{1-\gamma^2}}{\gamma}\in(0,\infty).
$$
\begin{lemma}[Local analysis near $Q_1$]\label{lem.Q1}
The critical point $Q_1$ has a two-dimensional center manifold and a one-dimensional unstable manifold which is contained in the $y$-axis. The direction of the flow on the center manifold is stable and the profiles contained in the orbits entering $Q_1$ on the center manifold present the local behavior \eqref{decay} as $\xi\to\infty$.
\end{lemma}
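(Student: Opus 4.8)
The plan is to imitate the local analyses of Lemmas \ref{lem.P0}--\ref{lem.P3}: linearize the system \eqref{PSinf1} at $Q_1=(0,0,0)$, isolate the hyperbolic part, treat the degenerate part by a center manifold reduction, and then read off the profile $f$ by inverting \eqref{change2}, \eqref{PSchange} and the time change \eqref{indep2}. A direct computation gives the matrix of the linearization of \eqref{PSinf1} at $Q_1$,
$$
M(Q_1)=\begin{pmatrix}
0 & 0 & 0 \\
1 & \frac{p-m}{2} & 0 \\
0 & 0 & 0
\end{pmatrix},
$$
with eigenvalues $0$, $0$ and $\lambda=\frac{p-m}{2}>0$ (here $p>m$ is used). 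The eigenvector for $\lambda$ is $(0,1,0)$, and since the $y$-axis is invariant for \eqref{PSinf1} (both $\dot x$ and $\dot z$ vanish identically on $\{x=z=0\}$), uniqueness of the unstable manifold \cite[Theorem 3.2.1]{GH} forces the one-dimensional unstable manifold of $Q_1$ to be the $y$-axis. Since $\ker M(Q_1)$ is two-dimensional, \cite[Theorem 3.2.1]{GH} also provides a two-dimensional local center manifold through $Q_1$; it contains the line of equilibria $\{x=y=0\}$ (consistent with the critical points $Q_\gamma$ lying on the $z$-axis).

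Next I would analyze the reduced flow on the center manifold. The change of variable $\tilde y=y+\tfrac{2}{p-m}x$ block-diagonalizes the linear part into $\dot x=0$, $\dot{\tilde y}=\tfrac{p-m}{2}\tilde y$, $\dot z=0$, so the center manifold is a graph $\tilde y=h(x,z)$ with $h=O(|(x,z)|^2)$; moreover, restricting \eqref{PSinf1} to the invariant plane $\{x=0\}$ forces $h(0,z)\equiv 0$, i.e.\ $h(x,z)=x\,g(x,z)$ with $g$ small near $Q_1$. Substituting $y=h(x,z)-\tfrac{2}{p-m}x$ into the $x$- and $z$-equations of \eqref{PSinf1} gives the reduced system
$$
\dot x = x^2\Big[(m-1)g(x,z)-\tfrac{2(p-1)}{p-m}\Big], \qquad
\dot z = xz\Big[(p-1)g(x,z)-\tfrac{2(p-1)}{p-m}\Big],
$$
whose bracketed factors are strictly negative near $Q_1$. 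Hence, in the quadrant $x\ge0$, $z\ge0$, both $x$ and $z$ are non-increasing and every orbit on the center manifold stays in the quadrant, with $x(\eta_1)\to0$, $z(\eta_1)\to0$, $\tilde y=h(x,z)\to0$, $y\to0$ as $\eta_1\to+\infty$. Recalling from Subsection \ref{subsec.inf} that $\xi\to\infty$ corresponds to $\eta_1\to+\infty$, this is precisely the assertion that the flow on the center manifold is stable.

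It remains to recover the profile behavior. Along such an orbit, $\dot x=-\tfrac{2(p-1)}{p-m}x^2(1+o(1))$ and $\tfrac{d\ln\xi}{d\eta_1}=1/X=x$ (from \eqref{indep2}) integrate to $\eta_1\sim A_0\,\xi^{2(p-1)/(p-m)}$, hence $X=1/x\sim\tfrac{2(p-1)A_0}{p-m}\,\xi^{2(p-1)/(p-m)}$, for some orbit-dependent constant $A_0>0$. Undoing \eqref{PSchange} through $f^{1-m}=m(p-1)X\xi^{-2}$ then gives $f(\xi)\sim C\,\xi^{-2/(p-m)}$ as $\xi\to\infty$, with $C=\big(\tfrac{2m(p-1)^2A_0}{p-m}\big)^{-1/(m-1)}$, which runs over all of $(0,\infty)$ as $A_0$ does; equivalently, $C$ is fixed by the limit of $Z=Z(\eta)$ along the orbit, since $z/x\equiv Z\to C^{p-m}/m$. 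This is exactly \eqref{decay}.

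The delicate point is that the two neutral directions are only \emph{algebraically} attracting — one of them is even a line of equilibria — so stability is invisible at the linear level: the argument hinges on extracting the leading \emph{nonlinear} term of the reduced vector field on the center manifold (in particular on the structural fact $h(0,z)\equiv 0$) and on checking that the higher-order remainder, involving the a priori unknown $g$, cannot reverse its sign in the quadrant $x,z\ge0$. A secondary, routine issue is the non-uniqueness of the center manifold — harmless here, since any orbit converging to $Q_1$ forward in time must lie on one and the $y$-axis carries none — together with the bookkeeping of the constant $A_0$ through the successive changes of variables, which is exactly what leaves the constant $C$ in \eqref{decay} free.
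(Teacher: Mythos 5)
Your proposal is correct and follows essentially the same route as the paper: linearization of \eqref{PSinf1} at $Q_1$, identification of the one-dimensional unstable manifold with the invariant $y$-axis, a center manifold reduction after the same block-diagonalizing change of variables (your $\tilde y$ is just $\tfrac{2}{p-m}$ times the paper's $w$), the same reduced flow $\dot x\approx-\tfrac{2(p-1)}{p-m}x^2$, $\dot z\approx-\tfrac{2(p-1)}{p-m}xz$ showing the stable direction of the flow, and the same recovery of \eqref{decay} by undoing \eqref{indep2}, \eqref{change2} and \eqref{PSchange}. The only cosmetic differences are that you obtain the absence of pure-$z$ terms in the center manifold graph structurally (from the invariance of $\{x=0\}$ and the line of equilibria), where the paper computes the quadratic coefficients explicitly via Carr, and you sidestep the paper's uniqueness claim for the two-dimensional manifold by observing that non-uniqueness is harmless --- both acceptable.
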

\begin{proof}
The linearization of the system \eqref{PSinf1} in a neighborhood of $Q_1$ has the matrix
$$M(Q_1)=\left(
         \begin{array}{ccc}
           0 & 0 & 0 \\[1mm]
           1 & \frac{p-m}{2} & 0 \\[1mm]
           0 & 0 & 0 \\
         \end{array}
       \right).
$$
The one-dimensional unstable manifold is contained in the $y$-axis, owing to its invariance and the eigenvector $e_2=(0,1,0)$ corresponding to the single positive eigenvalue. We analyze next the center manifolds of $Q_1$. This is done by further introducing the change of variable
\begin{equation*}
w=\frac{p-m}{2}y+x,
\end{equation*}
which maps the system \eqref{PSinf1} into the new system
\begin{equation}\label{PSinf1.bis}
\left\{\begin{array}{ll}\dot{x}=-\frac{2(p-1)}{p-m}x^2+\frac{2(m-1)}{p-m}xw,\\[1mm]
\dot{w}=\frac{p-m}{2}w-\frac{2}{p-m}w^2-\frac{p-m}{2}xz+\frac{2(m+1)-N(p-m)}{p-m}xw-\frac{2p-N(p-m)}{p-m}x^2,\\[1mm]
\dot{z}=-\frac{2(p-1)}{p-m}xz+\frac{2(p-1)}{p-m}zw,\end{array}\right.
\end{equation}
Looking for center manifolds in the general form 
$$
w(x,z)=ax^2+bxz+cz^2+O(|(x,z)|^3),
$$
according to \cite[Theorem 2, Section 2.5]{Carr}, we find that any center manifold of $Q_1$ has the equation
\begin{equation}\label{cmf}
\frac{p-m}{2}y+x=w=\frac{2[mN-p(N-2)]}{(p-m)^2}x^2+xz+o(|(x,z)|^2),
\end{equation}
while the flow on the center manifold is given by the reduced system (according to \cite[Theorem 2, Section 2.4]{Carr})
\begin{equation}\label{interm4}
\left\{\begin{array}{ll}\dot{x}&=-\frac{2(p-1)}{p-m}x^2+x^2O(|(x,z)|),\\[1mm]
\dot{z}&=-\frac{2(p-1)}{p-m}xz+xO(|(x,z)|^2),\end{array}\right.
\end{equation}
We thus deduce that the direction of the flow on the center manifolds is stable. The latter, together with the fact that the single nonzero eigenvalue is positive and \cite[Theorem 3.2']{Sij}, imply that the two-dimensional manifold is unique. On the center manifold, the reduced system \eqref{interm4} can be integrated in a first approximation to find
\begin{equation}\label{interm6}
\frac{z(\eta_1)}{x(\eta_1)}\to k>0, \qquad {\rm as} \ \eta_1\to\infty.
\end{equation}
We derive from the first equation in \eqref{interm4} and \eqref{change2} that, in a neighborhood of $Q_1$ and on the center manifold, we have
\begin{equation}\label{interm7}
x(\eta_1)\sim\frac{p-m}{2(p-1)\eta_1}, \qquad X(\eta_1)\sim\frac{2(p-1)}{p-m}\eta_1, \qquad {\rm as} \ \eta_1\to\infty,
\end{equation}
and furthermore, by replacing \eqref{interm7} into \eqref{indep2},
\begin{equation}\label{interm8}
\xi(\eta_1)=\int_0^{\eta_1}\frac{s}{X(s)}\,ds\to\int_{0}^{\infty}\frac{\eta_1}{X(\eta_1)}\,d\eta_1=+\infty,
\end{equation}
thus the behavior on orbits entering $Q_1$ corresponds to the limit $\xi\to\infty$ in terms of profiles. Taking this into account, by undoing \eqref{change2}, \eqref{interm6} leads to $Z(\xi)\to k$ as $\xi\to\infty$, which is equivalent to the local behavior \eqref{decay}, as claimed.
\end{proof}

\noindent \textbf{Remark.} In the original $(X,Y,Z)$ space, taking into account the local behavior \eqref{decay}, the orbits entering the critical point $Q_1$ have the properties $X(\eta)\to\infty$, $Y(\eta)\to-2/(p-m)$, $Z(\eta)\to k$ for any $k\in(0,\infty)$, all the previous limits being taken as $\eta\to\infty$. This representation will be useful in the sequel.

The next lemma deals with the stability of the critical point $Q_5$.
\begin{lemma}[Local analysis near $Q_5$]\label{lem.Q5}
The critical point $Q_5$ is a saddle point in the system \eqref{PSinf1}, with a two-dimensional unstable manifold and a one-dimensional stable manifold contained in the invariant $y$-axis. The orbits on the unstable manifold contain profiles presenting a ``dead-core" local behavior, in the sense that there exists $\xi_0\in(0,\infty)$ such that
\begin{equation}\label{beh.Q5}
f(\xi)=0, \ \xi\in[0,\xi_0], \qquad f(\xi)>0, \ \xi\in(\xi_0,\xi_0+\delta), \qquad (f^m)'(\xi_0)=0,
\end{equation}
for some $\delta>0$.
\end{lemma}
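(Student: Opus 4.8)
The plan is to linearize the system \eqref{PSinf1} at $Q_5=(0,(p-m)/2,0)$, read off the hyperbolic structure, and then undo the transformation \eqref{PSchange} along the orbits of the two-dimensional unstable manifold. Computing the Jacobian $M(Q_5)$, one notices that the third equation of \eqref{PSinf1} does not contain $x$ and the first does not contain $z$, so $M(Q_5)$ is lower triangular, with diagonal entries $\frac{(m-1)(p-m)}{2}$, $-\frac{p-m}{2}$ and $\frac{(p-1)(p-m)}{2}$; these are then the eigenvalues $\lambda_1=\frac{(m-1)(p-m)}{2}>0$, $\lambda_2=-\frac{p-m}{2}<0$, $\lambda_3=\frac{(p-1)(p-m)}{2}>0$, all signs being forced by $m>1$ and $p>m$. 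Hence $Q_5$ is a hyperbolic saddle with a two-dimensional unstable manifold and a one-dimensional stable manifold. The eigenvector associated with $\lambda_2$ is $(0,1,0)$; since the $y$-axis is invariant for \eqref{PSinf1} (there $\dot x=\dot z=0$), the uniqueness of the stable manifold \cite[Theorem 3.2.1]{GH} forces it to lie entirely on the $y$-axis.

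Next I would reconstruct the profiles on the unstable manifold. Along any orbit converging to $Q_5$ (that is, as $\eta_1\to-\infty$) one has in the original variables $X\to\infty$, $Y/X\to(p-m)/2$ and $Z/X\to0$. From \eqref{PSchange} one computes $Y/X=\frac{(p-1)(f^m)'}{\xi f}$, so the limit $Y/X\to(p-m)/2$ gives $(f^m)'(\xi)\sim\frac{p-m}{2(p-1)}\,\xi f(\xi)$, while $X\to\infty$ forces $f\to0$; in particular $(f^m)'\to0$. To see that this happens at a \emph{finite positive} value of $\xi$, I would use the first equation of \eqref{PSsyst}: since $Y\sim\frac{p-m}{2}X$, for $X$ large one has $\dot X=X(2-(m-1)Y)\approx-\frac{(m-1)(p-m)}{2}X^2$, whence $\frac{d}{d\eta}(1/X)\to\frac{(m-1)(p-m)}{2}$, so $1/X$ vanishes at a finite $\eta=\eta_*$ and therefore $\xi=e^{\eta}\to\xi_0:=e^{\eta_*}\in(0,\infty)$; the relation \eqref{indep2} (where $\int X\,d\eta$ diverges only logarithmically as $\eta\to\eta_*$) confirms that this end corresponds to the approach to $Q_5$ in the $\eta_1$ variable. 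Integrating the asymptotic identity $(f^m)'\sim\frac{p-m}{2(p-1)}\xi_0 f$ with $f(\xi_0)=0$ then yields
\[
f(\xi)\sim\left[\frac{(m-1)(p-m)\xi_0}{2m(p-1)}\,(\xi-\xi_0)\right]^{1/(m-1)},\qquad \xi\to\xi_0^{+},
\]
so $f>0$ on a right neighbourhood of $\xi_0$ and $(f^m)'(\xi)=m f^{m-1}f'\to0$ as $\xi\to\xi_0^{+}$. Extending $f$ by $0$ on $[0,\xi_0]$ produces the dead-core profile \eqref{beh.Q5}: since the trivial profile solves \eqref{SSODE} and both $f$ and the flux $(f^m)'$ are continuous and vanish at $\xi_0$, the matched function is a weak solution across $\xi_0$. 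Finally, among the orbits of the unstable manifold the single one lying in the invariant plane $\{z=0\}$ has $Z\equiv0$, hence carries no non-trivial profile, whereas every other orbit has $z>0$, i.e. $Z>0$, and carries a profile of the above kind.

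The main obstacle is the rigour of the reconstruction just sketched. First, one must genuinely justify that $\eta_*>-\infty$, i.e. that $\xi_0>0$ is a true dead core rather than $\xi_0=0$; this follows from the exponential approach to $Q_5$ along the unstable manifold (so that $x=1/X$ decays like $e^{\lambda_1\eta_1}$, making $\eta=\eta_*+\int x\,d\eta_1$ converge), but the argument must be carried out carefully, controlling the quadratic and lower-order corrections to $\dot X$. Second, one must promote the formal leading-order asymptotics above to genuine asymptotics and check that the matched function is a bona fide self-similar solution; this is most cleanly achieved by invoking the local existence and uniqueness theory for the degenerate equation \eqref{SSODE} near an interface point --- the usual free-boundary analysis for porous-medium-type equations --- which at the same time identifies the orbits entering $Q_5$ with the profiles having a dead core at some $\xi_0\in(0,\infty)$. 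The remaining verification, that the expression for $(f^m)'$ near $\xi_0$ indeed tends to $0$ with the power $1/(m-1)$ and the stated constant, is then a routine integration.
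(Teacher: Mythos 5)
Your proposal is correct and follows essentially the same route as the paper: the same lower-triangular Jacobian with eigenvalues $\tfrac{(m-1)(p-m)}{2},-\tfrac{p-m}{2},\tfrac{(p-1)(p-m)}{2}$, the stable manifold forced onto the invariant $y$-axis by uniqueness, and the dead-core behaviour recovered by integrating the asymptotic relation $(f^m)'\sim\tfrac{p-m}{2(p-1)}\xi f$ once the approach to $Q_5$ is shown to occur at a finite $\xi_0\in(0,\infty)$. The only immaterial differences are that you prove finiteness of $\xi_0$ from the $X$-equation of \eqref{PSsyst} in the variable $\eta=\ln\xi$ whereas the paper integrates the $x$-equation in $\eta_1$ and then uses \eqref{indep2}, and your local expansion in $\xi-\xi_0$ agrees to leading order with the paper's $(\xi^2-\xi_0^2)$ form.
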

\begin{proof}
The linearization of the system \eqref{PSinf1} in a neighborhood of the critical point $Q_5$ has the matrix
$$
M(Q_5)=\left(
         \begin{array}{ccc}
           \frac{(m-1)(p-m)}{2} & 0 & 0 \\[1mm]
           1-\frac{N(p-m)}{2} & -\frac{p-m}{2} & 0 \\[1mm]
           0 & 0 & \frac{(p-m)(p-1)}{2} \\
         \end{array}
       \right),
$$
with eigenvalues 
$$
\lambda_1=\frac{(m-1)(p-m)}{2}>0, \quad \lambda_2=-\frac{p-m}{2}<0, \quad \lambda_3=\frac{(p-m)(p-1)}{2}>0.
$$
Since the eigenvector corresponding to $\lambda_2$ is $e_2=(0,1,0)$, the uniqueness of a stable manifold together with the invariance of the $y$ axis for the system \eqref{PSinf1} entail that the stable manifold is contained in the $y$ axis. On the unstable manifold, we have $y(\eta_1)\to(p-m)/2$ as $\eta_1\to-\infty$. We next plug this limit in the first equation of the system \eqref{PSinf1} and integrate to deduce that, in a first approximation, 
$$
X(\eta_1)\sim Ce^{-(m-1)(p-m)\eta_1/2}, \qquad {\rm as} \ \eta_1\to-\infty.
$$
The definition \eqref{indep2} then entails that
$$
\xi(\eta_1)\to\int_0^{-\infty}\frac{\eta_1}{X(\eta_1)}\,d\eta_1\sim-\int_{-\infty}^{0}C\eta_1e^{(m-1)(p-m)\eta_1/2}\,d\eta_1<\infty,
$$
whence $\xi(\eta_1)\to\xi_0\in(0,\infty)$ from the right, as $\eta_1\to-\infty$. With this in mind, the local behavior follows from the limit
$$
y(\eta_1)=\frac{Y(\eta_1)}{X(\eta_1)}\to\frac{p-m}{2}, \qquad {\rm as} \ \eta_1\to-\infty,
$$
which is equivalent, after undoing \eqref{PSchange}, to 
\begin{equation}\label{beh.Q5.prec}
f(\xi)\sim\left[\frac{(p-m)(m-1)}{4m(p-1)}(\xi^2-\xi_0^2)\right]_{+}^{1/(m-1)}, \qquad {\rm as} \ \xi\to\xi_0\in(0,\infty), \ \xi>\xi_0,
\end{equation}
satisfying \eqref{decay}, as claimed.
\end{proof}
Among the critical points that can be seen on the system \eqref{PSinf1}, we are left with the family $Q_{\gamma}=(0,0,\kappa)$, $\kappa\in(0,\infty)$.
\begin{lemma}[Local analysis near $Q_{\gamma}$]\label{lem.Qg}
For $\kappa_0=1$, and thus corresponding $\gamma_0=\sqrt{2}/2$, there exists a unique trajectory of the system \eqref{PSinf1} entering the critical point $Q_{\gamma_0}$ from the positive part of the phase space. This trajectory is contained in the plane $\{Y=0\}$ and corresponds to the unique constant profile giving rise to the solution $U_{*}$ given in \eqref{const.sol}. For any $\gamma\in(0,1)$ with $\gamma\neq\gamma_0$, all the center and unstable manifolds of $Q_{\gamma}$ are contained in the invariant plane $\{x=0\}$ and do not contain self-similar profiles.
\end{lemma}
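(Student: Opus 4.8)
The plan is to treat $Q_{\gamma}=(0,0,\kappa)$ not as an isolated rest point but as one point of the whole \emph{line of equilibria} $\{(0,0,z):z\ge0\}$ of the system \eqref{PSinf1} (a direct check shows every such point is a rest point), which is exactly what produces the degeneracy below. First I would linearize \eqref{PSinf1} at $Q_{\gamma}$: the resulting matrix has characteristic polynomial $\lambda^{2}(\tfrac{p-m}{2}-\lambda)$, hence eigenvalues $\lambda_{1}=\lambda_{2}=0$ and $\lambda_{3}=\tfrac{p-m}{2}>0$, so there is a one-dimensional unstable manifold and a two-dimensional center manifold. The eigenvector of $\lambda_{3}$ is $\left(0,1,\tfrac{2(p-1)\kappa}{p-m}\right)$, which lies in the invariant plane $\{x=0\}$; since inside $\{x=0\}$ the point $Q_{\gamma}$ already carries a one-dimensional unstable manifold, uniqueness of the unstable manifold (as in \cite[Theorem 3.2.1]{GH}) forces the full unstable manifold of $Q_{\gamma}$ to be contained in $\{x=0\}$, and this holds for every $\gamma$, in particular for $\gamma_{0}$.

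The core of the argument is the flow on the center manifold, which I would compute via \cite[Section 2.5]{Carr}. \emph{Case $\gamma=\gamma_{0}$, i.e. $\kappa=1$.} Here the linearization is diagonalizable with center subspace $\{y=0\}$, so the center manifold is a graph $y=h(x,z-1)$ with, after a short computation, $h(x,z-1)=\tfrac{2}{p-m}\,x(z-1)+O(|(x,z-1)|^{3})$; the reduced system then reads $\dot x=-2x^{2}+\cdots$ and $\dot z=\tfrac{2(p-1)}{p-m}\,x(z-1)+\cdots$. In the region $x>0$ the value $z=1$ is therefore repelling for the $z$-component, hence the only orbit with $x>0$ converging to $Q_{\gamma_{0}}$ must have $z\equiv1$, and then $y\equiv0$; this is the invariant curve $\{y=0,\ z=1\}$, which, undoing \eqref{PSchange} ($Y\equiv0$ gives $f'\equiv0$ and $Z\equiv X$ gives $(p-1)f^{p-1}\equiv1$), is precisely the constant profile $f\equiv k_{*}$ underlying $U_{*}$ in \eqref{const.sol}; integrating $\dot x=-2x^{2}$ shows $x\to0^{+}$, i.e. (via \eqref{indep2}) $\xi\to\infty$, so this orbit does reach $Q_{\gamma_{0}}$. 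This yields the stated existence and uniqueness.

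\emph{Case $\gamma\ne\gamma_{0}$, i.e. $\kappa\ne1$.} Now the double eigenvalue $0$ carries a nontrivial Jordan block and the center subspace is \emph{not} contained in $\{x=0\}$. After the linear change $\widetilde y=y+\tfrac{2(1-\kappa)}{p-m}x$ straightening the unstable direction, I would compute the reduced flow on the center manifold to leading order as $\dot x=-2D\,x^{2}+\cdots$, with $D=\tfrac{(p-1)-(m-1)\kappa}{p-m}$, and $\dot\zeta=-\tfrac{2(p-1)\kappa(1-\kappa)}{p-m}\,x+\cdots$, with $\zeta=z-\kappa$. If $D<0$ then $\dot x>0$ on $\{x>0\}$ and any such orbit leaves at once; if $D>0$ then $x\sim 1/(2D\eta_{1})$ so $\int x\,d\eta_{1}=+\infty$, and since the coefficient of $x$ in $\dot\zeta$ is a nonzero constant (here one uses $\kappa\neq0$ and $\kappa\neq1$), the component $\zeta$ drifts logarithmically to $\pm\infty$ and the orbit again exits every fixed neighborhood of $Q_{\gamma}$; the knife-edge $D=0$ (i.e. $\kappa=\tfrac{p-1}{m-1}$) is dealt with by carrying the expansion one more order, the $\dot\zeta$-equation still enforcing the same drift. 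Hence no orbit meeting $\{x>0\}$ converges to $Q_{\gamma}$ when $\gamma\ne\gamma_{0}$, so the unstable manifold and the only relevant part of the center manifold of $Q_{\gamma}$ lie in $\{x=0\}$, where $X\equiv+\infty$, whence none of these orbits is the image under \eqref{PSchange} of a self-similar profile defined on a genuine $\xi$-interval. (Alternatively and independently: an orbit entering $Q_{\gamma}$ from $\{x>0\}$ would, undoing \eqref{PSchange}, give a profile with $\xi f'/f\to0$ and $f\to A_{\kappa}:=(\kappa/(p-1))^{1/(p-1)}$ as $\xi\to\infty$; inserting $f\to A_{\kappa}$ and $\xi f'\to0$ into \eqref{SSODE} shows that $\tfrac{1}{\xi^{N-1}}(\xi^{N-1}(f^{m})')'$ tends to $\tfrac{A_{\kappa}}{p-1}-A_{\kappa}^{p}$, which must vanish lest $f^{m}$ grow quadratically, forcing $A_{\kappa}=k_{*}$ and $\kappa=1$, a contradiction.)

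The step I expect to be the main obstacle is precisely this last case: the line of equilibria through $Q_{\gamma}$ makes $0$ a double eigenvalue, the center manifold is two-dimensional and genuinely sticks out of $\{x=0\}$, so one must analyze the reduced flow with enough care — including the signs of the leading coefficients and the borderline $D=0$ — to conclude that the only way an orbit can approach the line of equilibria from the physical half-space $x>0$ and limit to a prescribed point $Q_{\gamma}$ is when $\gamma=\gamma_{0}$, in which case the limiting orbit is pinned to the explicit curve carrying $U_{*}$.
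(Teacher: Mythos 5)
Your proposal follows essentially the same route as the paper: translate $Q_{\gamma}=(0,0,\kappa)$ to the origin in \eqref{PSinf1}, observe the spectrum $\{0,0,(p-m)/2\}$ with the unstable eigenvector $(0,1,2(p-1)\kappa/(p-m))$ inside the invariant plane $\{x=0\}$, compute the two-dimensional center manifold and its reduced flow, and conclude that an orbit can enter from $\{x>0\}$ only when $\kappa=1$, in which case it is unique and is the constant-profile orbit of $U_*$ in \eqref{const.sol}. The paper does exactly this through the adapted variables $w,v$ (system \eqref{interm10}, reduced flow \eqref{flowgamma}) and then the time change $d\theta=x\,d\eta_1$, obtaining for $\kappa=1$ a saddle and invoking \cite{Sij}; your ``$z=1$ is repelling'' argument at $\kappa=1$ is a legitimate variant, provided you note (as your own computation implicitly requires) that the explicit invariant line $\{y=0,\,z=1\}$ lies on the center manifold, so the graph function has no pure $x$-terms and $\{\zeta=0\}$ is invariant for the reduced flow.

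The one step that does not go through as stated is your treatment of $\kappa\neq1$ via the sign of $D$, specifically the borderline $D=0$, i.e.\ $\kappa=(p-1)/(m-1)$, which is an admissible value of $\kappa$. The drift $\dot\zeta\approx c\,x$ with $c\neq0$ yields a contradiction only if $\int x\,d\eta_1=\infty$ along a hypothetical incoming orbit; when $D=0$ one has merely $\dot x=O\bigl(x\,|(x,\zeta)|^2\bigr)$, and ``carrying the expansion one more order'' does not by itself exclude an orbit approaching the equilibrium with $x$ decaying fast relative to $\zeta$, so that the drift remains summable while $\zeta\to0$. The paper's device is uniform in $\kappa\neq1$ and avoids any case distinction on $D$: after $d\theta=x\,d\eta_1$ the reduced flow satisfies $d\zeta/d\theta\to c\neq0$ at the origin (a regular point of the $\theta$-flow), while $\{x=0\}$ stays invariant and $d(\log x)/d\theta$ remains bounded, so no orbit with $x>0$ can converge to $Q_\gamma$. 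Your argument can be repaired in the same spirit by observing that the line of equilibria $\{x=y=0\}$ lies on every center manifold, hence the graph function vanishes at $x=0$; this gives $|d(\log x)/d\zeta|$ bounded, and $x\to0$ over a finite $\zeta$-interval becomes impossible. Note also that the precise coefficient of $x^2$ in the reduced $\dot x$-equation, which is load-bearing in your case analysis, plays no role in the paper's argument. Finally, a minor slip in your parenthetical alternative: entering $Q_\gamma$ gives $y=Y/X\to0$, which means $m(p-1)f^{m-2}(\xi)f'(\xi)/\xi\to0$, not $\xi f'/f\to0$; the paper obtains the needed decay of $\xi f'$ and $\xi^2 f''$ only along a subsequence, via \cite[Lemma 2.9]{IL13}, before evaluating \eqref{SSODE} in the limit.
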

\begin{proof}
For the reader's convenience, we give first a short, formal, but rather convincing argument in terms of profiles. Assume that there is a trajectory entering some point $(0,0,\kappa)$ in the system \eqref{PSinf1}, that is, $z(\eta_1)\to\kappa$ as $\eta_1\to\infty$, which gives
$$
\kappa=\lim\limits_{\xi\to\infty}\frac{Z(\xi)}{X(\xi)}=(p-1)\lim\limits_{\xi\to\infty}f^{p-1}(\xi),
$$
or equivalently
$$
\lim\limits_{\xi\to\infty}f(\xi)=\left(\frac{\kappa}{p-1}\right)^{1/(p-1)}=:L>0.
$$
By applying twice \cite[Lemma 2.9]{IL13} to the function $f-L$, respectively to the function $\xi f'$, we infer that there is a subsequence $(\xi_n)_{n\geq1}$ such that $\xi_n\to\infty$ and
$$
\lim\limits_{n\to\infty}\xi_nf'(\xi_n)=\lim\limits_{n\to\infty}\xi_n^2f''(\xi_n)=0.
$$
By evaluating the equation \eqref{SSODE} at $\xi=\xi_n$ and passing to the limit as $n\to\infty$, we are left only with the terms not containing derivatives, and thus we can identify $L=(1/(p-1))^{1/(p-1)}$ and deduce that necessarily $\kappa=1$. For the uniqueness of the orbit entering this point, we have to enter the local analysis of it in the system \eqref{PSinf1}. To this end, we translate the generic point $(0,0,\kappa)$ with any $\kappa>0$ to the origin by letting $z=\overline{z}+\kappa$ in \eqref{PSinf1} to find
\begin{equation}\label{interm8bis}
\left\{\begin{array}{ll}\dot{x}=x[(m-1)y-2x],\\
\dot{y}=-y^2+\frac{p-m}{2}y+(1-\kappa)x-Nxy-x\overline{z},\\
\dot{\overline{z}}=(p-1)\overline{z}y+(p-1)\kappa y,\end{array}\right.
\end{equation}
noticing that the linearization near the origin of the system \eqref{interm8bis} has a two-dimensional center manifold and an unstable manifold contained in the invariant plane $\{x=0\}$ (a fact that follows from the invariance of $\{x=0\}$, the uniqueness of the unstable manifold as given by \cite[Theorem 3.2.1]{GH} and the fact that the eigenspace of the eigenvalue $(p-m)/2$ is contained in the plane $\{x=0\}$). In order to study the center manifold, we proceed as in the proof of \cite[Lemma 2.4]{ILS23} and perform the double change of variables
\begin{equation}\label{interm9}
w=\frac{p-m}{2}y+(1-\kappa)x, \qquad v=\overline{z}-ly, \qquad l=\frac{2(p-1)\kappa}{p-m}.
\end{equation}
Replacing in \eqref{interm8bis} the variables $(x,y,\overline{z})$ by the new variables $(x,w,v)$, we obtain the system
\begin{equation}\label{interm10}
\left\{\begin{array}{ll}\dot{x}&=\left[\frac{2(m-1)(1-\kappa)}{p-m}-2\right]x^2+\frac{2(m-1)}{p-m}wx,\\
\dot{w}&=\frac{p-m}{2}w-\frac{2}{p-m}w^2-\frac{p-m}{2}vx-D_1x^2-D_2xw,\\
\dot{v}&=l(1-\kappa)x+D_3xv-D_4x^2-\frac{4lp}{(p-m)^2}w^2\\&-D_5xw+\frac{\alpha(p-1)}{\beta}vw,\end{array}\right.
\end{equation}
with coefficients
\begin{equation}\label{interm10b}
\begin{split}
&D_1=\frac{(1-\kappa)[(N-2)\beta+m\alpha(1-\kappa)-\alpha\kappa(p-1)]}{\beta}, \\ &D_2=\frac{N\beta+\alpha(m+1)-\alpha\kappa(m+p)}{\beta},\\
&D_3=\kappa+\frac{(p-1)\alpha(1-\kappa)}{\beta}, \\ &D_4=\frac{(1-\kappa)\alpha^2\kappa(p-1)[N\beta+\alpha(\kappa+p-2p\kappa)]}{\beta^3},\\
&D_5=\frac{\alpha^2\kappa(p-1)[N\beta+\alpha(\kappa+2p-3p\kappa)]}{\beta^3}.
\end{split}
\end{equation}
We next proceed as in the proof of \cite[Lemma 2.4]{ILS23} (to which we refer the reader for the calculation details) with the analysis of the center manifolds of the points $Q_{\gamma}$ identified as origin in the system \eqref{interm10}. In particular, following the local center manifold theory (see for example \cite[Section 2]{Carr}), we conclude that the center manifold has the form
$$
w=C_1x^2+C_2xv+C_3v^2+o(|(x,v)|^2),
$$
where the coefficients $C_1$, $C_2$ and $C_3$ can be explicitly calculated, and the flow on it is given by the reduced system
\begin{equation}\label{flowgamma}
\left\{\begin{array}{ll}\dot{x}&=\left[\frac{2(m-1)(1-\kappa)}{p-m}-2\right]x^2+x^2O(|(x,v)|),\\
\dot{v}&=\left[l(1-\kappa)\right]x+\left[-l+\frac{l(1-\kappa)}{\kappa}\right]xv\\&-Dx^2+xO(|(x,v)|^2),\end{array}\right.
\end{equation}
where
$$
D=\frac{l(1-\kappa)\alpha[(N-l)\beta+\alpha p(1-\kappa)]}{\beta^2}, \qquad l=\frac{2(p-1)\kappa}{p-m}.
$$
By performing the (implicit) change of independent variable $d\theta=xd\eta_1$ in \eqref{flowgamma} (which is equivalent to a simplification by $x$ in the right hand side of the system, which can be performed outside the plane $\{x=0\}$) and integrating the remaining system in a first approximation, it readily follows that when $\kappa\neq1$ there are no orbits connecting to $Q_{\gamma}$ from outside the invariant plane $\{x=0\}$. On the contrary, if $\gamma=\gamma_0$, that is, $\kappa=1$, the system \eqref{flowgamma} (taking derivatives with respect to the new variable $\theta$ defined implicitly by $d\theta=xd\eta_1$) becomes
\begin{equation}\label{flowgamma2}
\left\{\begin{array}{ll}\dot{x}=-2x+xO(|(x,x)|),\\
\dot{v}=\frac{2(p-1)}{p-m}v+O(|(x,v)|^2),\end{array}\right.
\end{equation}
and we notice that $(x,v)=(0,0)$ is a saddle point for the system \eqref{flowgamma2}. This fact, together with the theory in \cite[Section 3]{Sij}, prove the uniqueness of the orbit entering $Q_{\gamma_0}$ on its stable manifold, coming from the region $\{x>0\}$. The same argument as in \eqref{interm7} and \eqref{interm8} (not depending on the $z$ variable) prove that along this orbit, $\eta_1\to+\infty$ implies $\xi\to\infty$, and this uniqueness implies the identification in profiles of this unique orbit with the constant profile in \eqref{const.sol}. For more calculation details, the reader is referred to the proof of \cite[Lemma 2.4]{ILS23} (see also \cite[Lemma 2.3]{IMS23}).
\end{proof}
We pass now to the analysis of the system \eqref{PSsyst} near the critical points $Q_2$ and $Q_3$. According to \cite[Theorem 5(b), Section 3.10]{Pe}, we perform a different change of variable projecting on the dominating $Y$ variable 
\begin{equation}\label{change3}
x=\frac{X}{Y}, \qquad z=\frac{Z}{Y}, \qquad w=\frac{1}{Y},
\end{equation}
together with a new independent variable $\eta_2$ defined implicitly by
\begin{equation}\label{indep3}
\frac{d\eta_2}{d\xi}=\frac{Y(\xi)}{\xi},
\end{equation}
to get the following system
\begin{equation}\label{PSinf2}
\left\{\begin{array}{ll}\pm\dot{x}=-x-Nxw+\frac{p-m}{2}x^2+x^2w-xzw,\\[1mm]
\pm\dot{z}=-pz-Nzw+\frac{p-m}{2}xz+xzw-z^2w,\\[1mm]
\pm\dot{w}=-mw-(N-2)w^2+\frac{p-m}{2}xw+xw^2-zw^2,\end{array}\right.
\end{equation}
where the signs plus and minus correspond to the direction of the flow in a neighborhood of the points, that is, the minus sign applies to $Q_2$ and the plus sign applies to $Q_3$. In this system (taken with the corresponding signs), the critical points $Q_2$, respectively $Q_3$, are mapped into the origin $(x,z,w)=(0,0,0)$.
\begin{lemma}\label{lem.Q23}
The critical points $Q_2$ and $Q_3$ are, respectively, an unstable node and a stable node. The orbits going out of $Q_2$ correspond to profiles $f(\xi)$ such that there exists $\xi_0\in(0,\infty)$ and $\delta>0$ for which
\begin{equation}\label{beh.Q2}
f(\xi_0)=0, \qquad (f^m)'(\xi_0)=C>0, \qquad f>0 \ {\rm on} \ (\xi_0,\xi_0+\delta),
\end{equation}
while the orbits entering the stable node $Q_3$ correspond to profiles $f(\xi)$ such that there exists $\xi_0\in(0,\infty)$ and $\delta\in(0,\xi_0)$ for which
\begin{equation}\label{beh.Q3}
f(\xi_0)=0, \qquad (f^m)'(\xi_0)=-C<0, \qquad f>0 \ {\rm on} \ (\xi_0-\delta,\xi_0).
\end{equation}
\end{lemma}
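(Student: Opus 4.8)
The plan is to linearize the system \eqref{PSinf2} at its critical point $(x,z,w)=(0,0,0)$ — which represents $Q_2$ with the minus sign and $Q_3$ with the plus sign, the choice corresponding in each case to traversing the orbits in the direction of increasing $\xi$ — read off the local phase portrait, and then translate the orbits into profile behaviors by undoing the changes of variable \eqref{change3} and \eqref{indep3}, exactly as was done for $Q_5$ in Lemma~\ref{lem.Q5}. Apart from the diagonal linear terms, every monomial on the right-hand sides of \eqref{PSinf2} is at least quadratic, so the linearization matrix at the origin is diagonal, equal to $\mathrm{diag}(-1,-p,-m)$ affected by the $\pm$ sign; hence the eigenvalues are $1,p,m$ for $Q_2$ and $-1,-p,-m$ for $Q_3$, in both cases with the coordinate axes as eigenvectors. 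As all three eigenvalues share the same sign, $Q_2$ is an unstable node and $Q_3$ a stable node, in agreement with $Q_2$ and $Q_3$ being antipodal points on the Poincar\'e equator, cf. \cite[Theorem 5(b), Section 3.10]{Pe}.

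Next I would single out the relevant orbits. Since $m>1$, the eigenvalue of smallest modulus is the one associated with the $x$-axis, so every orbit leaving $Q_2$ (respectively entering $Q_3$), other than the exceptional ones on the $z$- or $w$-axis or in the invariant plane $\{x=0\}$, is tangent at the critical point to the $x$-axis. The excluded ones are irrelevant here, since $\{w=0\}$ is the equator $|Y|=\infty$ and $\{x=0\}$ is $\{X=0\}$, i.e. $f\equiv\infty$ because $m>1$; thus the orbits corresponding to genuine profiles with $X,Z>0$ are precisely the generic ones, tangent to the $x$-axis. Along such an orbit one has $w\sim c\,x^{m}$ near the critical point, because the linear rates in the $x$- and $w$-directions are $\pm1$ and $\pm m$ while every term of $\dot w$ carries a factor $w$, whence $\dot w/w\to\pm m$ and $\dot x/x\to\pm1$.

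Finally I would undo \eqref{change3}--\eqref{indep3} along these orbits. From \eqref{indep3} and $w=1/Y$ one has $\mathrm{d}(\ln\xi)/\mathrm{d}\eta_2=w$; since $w$ decays exponentially as the orbit tends to the critical point, this integrates to a finite limit, so $\xi\to\xi_0$ for some $\xi_0\in(0,\infty)$, approached from the right for $Q_2$ and from the left for $Q_3$ owing to the flow direction encoded by the $\pm$ sign in \eqref{PSinf2}. Then $|Y|\to\infty$ together with $\xi\to\xi_0>0$ forces $f(\xi_0)=0$, and combining $w\sim c\,x^{m}$ with $X=x/w$, $Y=1/w$, $Z=z/w$ and the definitions \eqref{change3} pins down the local behavior $f(\xi)\sim c_*|\xi-\xi_0|^{1/m}$, i.e. $(f^m)'(\xi_0)=\pm C\neq0$: the sign is $+$ for $Q_2$ (where $Y\to+\infty$, so $f$ increases from $0$ on the right of $\xi_0$, giving \eqref{beh.Q2}) and $-$ for $Q_3$ (where $Y\to-\infty$, so $f$ decreases to $0$ on the left of $\xi_0$, giving \eqref{beh.Q3}).

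I expect the main obstacle to be this last reconstruction step, and within it two points in particular: verifying that $\xi_0\in(0,\infty)$ (the convergence of the integral defining $\xi(\eta_2)$, resting on the exponential decay of $w$), and isolating the exponent $1/m$, equivalently that $(f^m)'(\xi_0)$ is finite and nonzero — in contrast with the ``dead-core'' behavior at $Q_5$, where $(f^m)'(\xi_0)=0$. The linearization, the node classification, and the tangency of the relevant orbits to the $x$-axis should all be routine.
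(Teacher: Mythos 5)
Your proposal follows essentially the same route as the paper: linearize \eqref{PSinf2} at the origin (diagonal matrix with entries $\mp1,\mp p,\mp m$, giving the unstable/stable node classification), extract the first-approximation relation $w\sim c\,x^m$ from the $x$- and $w$-equations, use the exponential decay of $w=1/Y$ in \eqref{indep3} to conclude $\xi\to\xi_0\in(0,\infty)$, and undo \eqref{change3} to obtain $f\sim c_*|\xi-\xi_0|^{1/m}$, i.e.\ $(f^m)'(\xi_0)=\pm C\neq 0$ with the signs dictated by $Y\to\pm\infty$. This matches the paper's proof (which phrases the last step as $f^{m-1}f'\sim C_2\xi^{(m+1)/(m-1)}$ near $\xi_0$), so no changes are needed.
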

\begin{proof}
It follows by direct calculation of the matrix of the linearization that the critical point $(0,0,0)$ is an unstable node in the system \eqref{PSinf2} taken with minus signs (which corresponds to $Q_2$ according to the theory in \cite[Theorem 5(b), Section 3.10]{Pe}) and a stable node in the system \eqref{PSinf2} taken with plus signs (which corresponds to $Q_3$). In order to establish the local behavior, we show that $\eta_2\to\pm\infty$ implies $\xi\to\xi_0\in(0,\infty)$. We perform this analysis for the stable node $Q_3$ (taking the plus signs in the left hand side of the system \eqref{PSinf2}), the analysis for $Q_2$ being completely similar. Let us observe that the last equation of the system \eqref{PSinf2} gives in a neighborhood of the origin,
$$
\dot{w}(\eta_2)\sim-mw(\eta_2), \qquad {\rm whence} \qquad Y(\eta_2)=\frac{1}{w(\eta_2)}\sim Ce^{m\eta_2},
$$
as $\eta_2\to\infty$. We thus deduce from \eqref{indep3} that
$$
\xi(\eta_2)=\int_0^{\eta_2}\frac{s}{Y(s)}\,ds\to\int_0^{\infty}\frac{\eta_2}{Y(\eta_2)}\,d\eta_2<\infty,
$$
as $\eta_2\to\infty$, since the improper integral of $C\eta_2e^{-m\eta_2}$ converges as $\eta_2\to\infty$. We infer that, on the orbits entering the stable node $Q_3$, the profiles are supported on $[0,\xi_0]$ for some $\xi_0\in(0,\infty)$ and the local behavior near the point $Q_3$ translates into the one as $\xi\to\xi_0$, $\xi<\xi_0$. The first and third equation of the system \eqref{PSinf2} give in a first approximation
$$
w(\eta_2)\sim Cx^m(\eta_2), \qquad {\rm as} \ \eta_2\to\infty,
$$
which in terms of the initial variables $(X,Y,Z)$ leads to
\begin{equation}\label{interm12}
\frac{1}{Y(\xi)}\sim\frac{C_1X(\xi)^m}{|Y(\xi)|^m}, \qquad {\rm as} \ \xi\to\xi_0, \ \xi<\xi_0,
\end{equation}
with $C_1<0$ a negative constant. In terms of profiles, we obtain from \eqref{interm12} and \eqref{PSchange} that
$$
f(\xi)^{m-1}f'(\xi)\sim C_2\xi^{(m+1)/(m-1)}, \qquad {\rm as} \ \xi\to\xi_0, \ \xi<\xi_0,
$$
for another generic constant $C_2<0$ (that can be explicitly expressed in terms of $C_1$). The latter leads to the local behavior \eqref{beh.Q3} as $\xi\to\xi_0\in(0,\infty)$. The proof of the local behavior \eqref{beh.Q2} is completely similar and will be omitted here.
\end{proof}
We are left with the local analysis of the critical point $Q_4$. The same technique as above does not work directly, since the projection on the $Z$ variable inspired by \cite[Theorem 5(c), Section 3.10]{Pe}) leads to a system with three zero eigenvalues at the origin, whose analysis requires deeper techniques. However, we shall show that there is no trajectory coming from the finite part of the phase space and entering $Q_4$ directly on \eqref{SSODE}. Indeed, on a trajectory entering $Q_4$, we have
$$
Z(\eta)\to\infty, \quad \frac{Y(\eta)}{Z(\eta)}\to0, \quad \frac{X(\eta)}{Z(\eta)}\to0, \quad {\rm as} \ \eta\to\infty,
$$
which in terms of profiles writes
\begin{equation}\label{interm13}
\xi^2f^{p-m}(\xi)\to\infty, \quad \frac{f'(\xi)}{\xi f^{p-m+1}(\xi)}\to0, \quad f^{1-p}(\xi)\to0, 
\end{equation}
either as $\xi\to\infty$ or as $\xi\to\xi_0\in(0,\infty)$, $\xi<\xi_0$. 
\begin{lemma}\label{lem.Q4}
There is no profile $f$ solution to \eqref{SSODE} for which \eqref{interm13} holds true.
\end{lemma}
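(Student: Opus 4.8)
The plan is to argue by contradiction: suppose $f$ solves \eqref{SSODE} and \eqref{interm13} holds. Since $p>1$, the third condition $f^{1-p}(\xi)\to0$ already forces $f(\xi)\to+\infty$, and \eqref{interm13} says this happens either (i) as $\xi\to\xi_0$ for some finite $\xi_0\in(0,\infty)$, $\xi<\xi_0$, or (ii) as $\xi\to\infty$. I would treat the two regimes separately, and I expect regime (ii) to be the main obstacle.

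In regime (i), near $\xi_0$ the coefficients $(N-1)/\xi$ and $\xi$ in \eqref{SSODE} are bounded, while the second condition in \eqref{interm13} gives $f'(\xi)=o(f^{p-m+1}(\xi))$. Hence $(f^m)'(\xi)=mf^{m-1}f'=o(f^p)$, $\xi f'(\xi)=o(f^{p-m+1}(\xi))=o(f^p)$ — this is where $m>1$ enters, ensuring $p-m+1<p$ — and of course $f=o(f^p)$. So every term of \eqref{SSODE} other than $(f^m)''$ and $f^p$ is $o(f^p)$, which gives $(f^m)''(\xi)=-f^p(\xi)(1+o(1))$; in particular $(f^m)''<0$ on some $(\xi_0-\delta,\xi_0)$. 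But then $f^m$ is concave there, hence bounded above on $(\xi_0-\delta,\xi_0)$ (it lies below the tangent at any interior point), which contradicts $f^m(\xi)\to\infty$ as $\xi\to\xi_0^-$.

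In regime (ii) I would proceed in three stages. Stage one: at a local minimum $\xi_*$ of $f$ one has $(f^m)'(\xi_*)=0$ and $(f^m)''(\xi_*)=mf^{m-1}(\xi_*)f''(\xi_*)\ge0$, while \eqref{SSODE} forces $(f^m)''(\xi_*)=f(\xi_*)\bigl(\tfrac{1}{p-1}-f^{p-1}(\xi_*)\bigr)$, which is negative once $f(\xi_*)>k_*$ — a contradiction. Since $f\to\infty$, $f$ therefore has no local minimum for $\xi$ large, and combined with $f\to\infty$ this forces $f$ to be nondecreasing, $f'\ge0$, on some $(\xi_2,\infty)$. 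Stage two: multiplying \eqref{SSODE} by $\xi^{N-1}$ and integrating once, the quantity
$$
\psi(\xi):=\xi^{N-1}(f^m)'(\xi)-\frac{p-m}{2(p-1)}\,\xi^N f(\xi)
$$
satisfies $\psi'(\xi)=-\tfrac{(p-m)N-2}{2(p-1)}\,\xi^{N-1}f(\xi)-\xi^{N-1}f^p(\xi)$, the coefficient being positive since $p>p_s$ gives $p-m>\tfrac{4m}{N-2}$, hence $(p-m)N>4m>2$. Thus $\psi$ is strictly decreasing, and since $f\ge1$ eventually we even get $\psi'(\xi)\le-\xi^{N-1}$, so $\psi(\xi)\to-\infty$. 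Using $f'\ge0$ (so $\xi^{N-1}(f^m)'\ge0$), from $\psi(\xi)\ge-\tfrac{p-m}{2(p-1)}\xi^Nf(\xi)$ and the formula for $\psi'$ I would extract the integral bound $\int_{\xi_2}^{\xi}s^{N-1}f^p(s)\,ds\le C_0\,\xi^Nf(\xi)$ for $\xi$ large; and from $\psi(\xi)<0$ eventually, i.e. $(f^{m-1})'(\xi)<\tfrac{(m-1)(p-m)}{2m(p-1)}\xi$, the polynomial upper bound $f(\xi)\le C_1\,\xi^{2/(m-1)}$ for $\xi$ large.

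The third stage is the genuinely delicate one — a single monotone quantity like $\psi$ does not by itself close the argument — so I would bootstrap the integral bound. Since $f$ is nondecreasing, $\int_{\xi/2}^{\xi}s^{N-1}f^p(s)\,ds\ge\tfrac{1-2^{-N}}{N}f(\xi/2)^p\,\xi^N$, so the integral bound gives $f(\xi)\ge C_2^{-1}f(\xi/2)^p$ for $\xi$ large; iterating along $\xi=2^kR$ yields $f(2^kR)\ge C_2^{1/(p-1)}\bigl(C_2^{-1/(p-1)}f(R)\bigr)^{p^k}$, which, once $R$ is chosen so large that $C_2^{-1/(p-1)}f(R)>2$, grows doubly-exponentially in $k$. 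This is impossible in view of the polynomial bound $f(2^kR)\le C_1(2^kR)^{2/(m-1)}$ for $k$ large. (Equivalently, feeding $f\lesssim\xi^{\alpha}$ back into the integral bound improves it to $f\lesssim\xi^{\alpha/p}$, and iterating drives the exponent to $0$, so $f$ is bounded, again contradicting $f\to\infty$.) This disposes of regime (ii) and completes the proof; the only nonroutine point is this bootstrap.
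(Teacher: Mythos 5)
Your proposal is correct, but in the second (and harder) regime it follows a genuinely different route from the paper. For the finite blow-up point $\xi_0$, the paper first shows $f$ is eventually increasing and then groups the terms of \eqref{SSODE} so that the two bracketed coefficients blow up, concluding $(f^m)''\to-\infty$; you instead use the second limit in \eqref{interm13} directly (together with $m>1$, so that $f^{p-m+1}=o(f^p)$) to show every term except $(f^m)''$ and $f^p$ is $o(f^p)$, and both arguments end with the same concavity-versus-vertical-asymptote contradiction, so here the difference is cosmetic and your version arguably uses the hypothesis \eqref{interm13} more efficiently. For $\xi\to\infty$ the divergence is real: after the common step that $f$ is eventually nondecreasing (same local-minimum argument as the paper), the paper argues pointwise from the splitting \eqref{interm15} that $\tfrac12 f^p-\beta\xi f'\to-\infty$ and then integrates $f^{-p}f'>1/(2\beta\xi)$ to get a logarithmic-versus-bounded contradiction; you instead integrate the equation against $\xi^{N-1}$, obtain the monotone quantity $\psi$, and from it both the integral inequality $\int^\xi s^{N-1}f^p\,ds\lesssim\xi^Nf(\xi)$ and the polynomial bound $f\lesssim\xi^{2/(m-1)}$, closing with the doubling bootstrap (doubly-exponential growth of $f(2^kR)$ against the polynomial bound). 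Your route is longer, but it is fully quantitative and sidesteps the one delicate point in the paper's Case 2, namely ruling out that the divergence in \eqref{interm15} is absorbed by $(f^m)''$ along a subsequence rather than by the bracket $\tfrac12 f^p-\beta\xi f'$; it also only uses $f\to\infty$ and $(p-m)N>2$, so it proves slightly more. The only blemish is the parenthetical alternative ending (``iterating drives the exponent to $0$, so $f$ is bounded''), which as stated does not quite yield boundedness since the exponent only tends to $0$ without reaching it; this is harmless because your main bootstrap, which is complete, does not rely on it.
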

\begin{proof}
The third limit in \eqref{interm13} gives that $f(\xi)\to\infty$, so that we either deal with a vertical asymptote or with an unbounded solution as $\xi\to\infty$. Assume for contradiction that there exists a profile $f$ with such property. We split the proof into two cases.

\medskip 

\noindent \textbf{Case 1. The limits in \eqref{interm13} are taken as $\xi\to\xi_0\in(0,\infty)$}. Notice first that $f$ is in this case increasing in a left neighborhood of $\xi_0$. Assume for contradiction that this is not the case, thus there is a sequence of points of local minima for $f$, $(\xi_n)_{n\geq1}$, such that $\xi_n\to\xi_0$ as $n\to\infty$. By evaluating \eqref{SSODE} at $\xi=\xi_n$, and taking into account that $f'(\xi_n)=(f^m)'(\xi_n)=0$, $(f^m)''(\xi_n)\geq0$, we find that 
$$
f^p(\xi_n)-\frac{1}{p-1}f(\xi_n)=f(\xi_n)\left[f^{p-1}(\xi_n)-\frac{1}{p-1}\right]\leq0,
$$
which is a contradiction for $n$ sufficiently large, as $f(\xi_n)\to\infty$ as $n\to\infty$. Hence $f$ is increasing on some neighborhood $(\xi_0-\delta,\xi_0)$ for some $\delta>0$. We then split \eqref{SSODE} for $\xi\in(\xi_0-\delta,\xi_0)$ as follows:
\begin{equation}\label{interm14}
(f^m)''(\xi)+\left[\frac{m(N-1)}{\xi}f^{m-1}(\xi)-\beta\xi\right]f'(\xi)+\left[f^{p-1}(\xi)-\frac{1}{p-1}\right]f(\xi)=0,
\end{equation}
where we recall that $\beta=(p-m)/2(p-1)$. Since both terms in brackets tend to $+\infty$ as $\xi\to\xi_0$ and $f'(\xi)\geq0$ for $\xi\in(\xi_0-\delta,\xi_0)$, it follows from \eqref{interm14} that $(f^m)''(\xi)\to-\infty$. But a simple calculus exercise shows that the latter is not possible when having a vertical asymptote, and we thus reach a contradiction with the existence of such profiles $f$ as in \eqref{interm13}.

\medskip 

\noindent \textbf{Case 2. The limits in \eqref{interm13} are taken as $\xi\to\infty$}. In exactly the same way as in Case 1, we get that $f$ is increasing on some interval $\xi\in(R,\infty)$, $R>0$. We then split \eqref{SSODE} in the following way: 
\begin{equation}\label{interm15}
(f^m)''(\xi)+\frac{N-1}{\xi}(f^m)'(\xi)+\left[\frac{1}{2}f^{p-1}(\xi)-\frac{1}{p-1}\right]f(\xi)+\frac{1}{2}f^{p}(\xi)-\beta\xi f'(\xi)=0.
\end{equation}
Since the second term in \eqref{interm15} is positive and the third term tends to $+\infty$ as $\xi\to\infty$, and by basic calculus considerations one cannot have $(f^m)''(\xi)\to-\infty$ when $f(\xi)\to\infty$ in an increasing way as $\xi\to\infty$, we infer that
$$
\frac{1}{2}f^p(\xi)-\beta\xi f'(\xi)\to-\infty, \quad {\rm as} \ \xi\to\infty.
$$
This gives in particular that there is $\xi_1\in(R,\infty)$ such that 
$$
\frac{1}{2}f^p(\xi)<\beta\xi f'(\xi), \quad {\rm or \ equivalently} \quad \frac{1}{2\beta\xi}<f^{-p}(\xi)f'(\xi),
$$
for any $\xi\in(\xi_1,\infty)$. By integrating over $(\xi_1,\xi)$, we obtain 
$$
\frac{1}{2\beta}\ln\left(\frac{\xi}{\xi_1}\right)<\frac{1}{1-p}\left(f(\xi)^{1-p}-f(\xi_1)^{1-p}\right),
$$
for any $\xi\in(\xi_1,\infty)$. But the latter leads to a contradiction by passing to the limit as $\xi\to\infty$, since the left hand side tends to $+\infty$, while the second one is bounded. We thus infer that there are no profiles satisfying \eqref{interm13} as $\xi\to\infty$, completing the proof.
\end{proof}

\subsection{Some preparatory results}\label{subsec.prep}

In this section, we establish some results related to the global behavior of orbits in the invariant planes $\{X=0\}$ of the system \eqref{PSsyst}, respectively $\{x=0\}$ of the system \eqref{PSinf1}, that will be very useful in the proof of Theorem \ref{th.1}. 
In the plane $\{X=0\}$, the system \eqref{PSsyst} reduces to the following one:
\begin{equation}\label{PSsystX0}
\left\{\begin{array}{ll}\dot{Y}=-(N-2)Y-Z-mY^2, \\ \dot{Z}=Z(2+(p-m)Y).\end{array}\right.
\end{equation}
\begin{lemma}\label{lem.X0}
The critical point $P_0=(0,0)$ is a saddle point for the system \eqref{PSsystX0}. The unique orbit going out of it on its unstable manifold connects to the critical point $P_2$.
\end{lemma}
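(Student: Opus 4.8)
The plan is to analyze the two-dimensional system \eqref{PSsystX0} in the invariant plane $\{X=0\}$ by a phase-plane argument, showing that the branch of the unstable manifold of $P_0$ entering the quadrant $\{Y<0,\,Z>0\}$ is trapped and forced to converge to $P_2$. First I would compute the linearization of \eqref{PSsystX0} at $P_0=(0,0)$: the matrix is $\left(\begin{smallmatrix}-(N-2)&-1\\0&2\end{smallmatrix}\right)$, with eigenvalues $-(N-2)<0$ and $2>0$, so $P_0$ is indeed a saddle, with a one-dimensional unstable manifold tangent to the eigenvector associated with $\lambda=2$. A short computation gives that eigenvector as $(-1,N)$ (up to scaling), so the branch of the unstable manifold leaving $P_0$ into $\{Z>0\}$ does so with $Y<0$; this is the orbit we must follow. (The other branch goes into $\{Z<0\}$, outside our region of interest, and the stable manifold lies on the $Y$-axis, consistent with Lemma \ref{lem.P0}.)

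Next I would identify the other critical points of \eqref{PSsystX0} in the closed region $R=\{Y\le 0,\ Z\ge 0\}$. Setting $\dot Z=0$ with $Z>0$ forces $Y=-2/(p-m)$, and then $\dot Y=0$ gives $Z=2[p(N-2)-mN]/(p-m)^2>0$ since $p>p_s$; this is exactly $P_2$. On the $Z=0$ axis the equilibria are $P_0$ and $P_1=(-(N-2)/m,0)$. The idea is to build a positively invariant bounded region $D\subset R$ whose only equilibria are $P_0$ (a repeller for the flow restricted to $D$) and $P_2$, so that the Poincar\'e–Bendixson theorem applies. I would bound $D$ using: the invariant line $\{Z=0\}$ as part of the boundary; a segment of the $Y$-axis $\{Y=0\}$, on which $\dot Y=-Z\le 0$ (flow points into $\{Y<0\}$); the line $\{Y=-2/(p-m)\}$, on which $\dot Z=0$ so orbits cross vertically; and an upper cap on $Z$, e.g.\ a line $Z=aY+b$ chosen so that the flow points downward across it for large $Z$ — along such a line $\dot Z - a\dot Y = Z(2+(p-m)Y) - a[-(N-2)Y-Z-mY^2]$, and one checks the sign is favorable for suitable $a,b$. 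To rule out periodic orbits I would also invoke a Dulac/Bendixson criterion: with multiplier $B(Y,Z)=1/Z$ one computes $\partial_Y(B\dot Y)+\partial_Z(B\dot Z) = \partial_Y(-(N-2)/Z - 1 - mY^2/Z) + 0 = -2mY/Z$, which has a strict sign ($>0$) on $\{Y<0,Z>0\}$; hence no closed orbit or homoclinic loop lies in that open quadrant. Combining these facts, the $\omega$-limit set of the unstable-manifold orbit is a single equilibrium, and since it has entered $\{Y<0,Z>0\}$ and cannot return to the $Y$-axis or the $Z$-axis (both invariant), the only possibility is $P_2$.

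The main obstacle I anticipate is the explicit construction of the bounded trapping region — specifically, pinning down the upper barrier on $Z$ and checking the flow direction along it and along $\{Y=-2/(p-m)\}$ simultaneously, so that an orbit leaving $P_0$ genuinely cannot escape to $Z\to\infty$ (which, via the Poincar\'e-sphere analysis, would mean running to $Q_4$ or to a $Q_\gamma$) or drift back toward $P_1$ along $\{Z=0\}$. One should note that $P_1$ lies on the boundary $\{Z=0\}$ at $Y=-(N-2)/m$, which is to the \emph{left} of $Y=-2/(p-m)$ precisely when $p>p_s$ — the same inequality that makes $P_2$ relevant — so the vertical line $\{Y=-2/(p-m)\}$ separates $P_0$ from $P_1$ in $\{Z\ge 0\}$, and one argues the orbit stays in the strip $-2/(p-m)\le Y\le 0$, or else crosses into $Y<-2/(p-m)$ where $\dot Z<0$ and a further barrier catches it before it can reach $P_1$. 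The Dulac computation then does the rest. An alternative, if the explicit barrier is awkward, is a Lyapunov-type monotonicity argument directly showing $Z$ (or a combination of $Y$ and $Z$) is eventually monotone along the orbit; but the Dulac criterion plus a crude box should suffice and is cleaner.
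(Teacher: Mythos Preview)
Your overall strategy---linearize at $P_0$, construct a positively invariant bounded region containing $P_2$, rule out periodic orbits via Dulac, then apply Poincar\'e--Bendixson---is exactly the paper's approach. However, there are two genuine gaps in your execution.

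First, your Dulac computation contains an arithmetic slip: you wrote $B\dot Y=-(N-2)/Z-1-mY^2/Z$, but the correct expression is $-(N-2)Y/Z-1-mY^2/Z$ (you dropped a factor of $Y$ in the first term). The actual divergence with $B=1/Z$ is therefore
\[
\partial_Y(B\dot Y)+\partial_Z(B\dot Z)=-\frac{N-2+2mY}{Z},
\]
which changes sign at $Y=-(N-2)/(2m)$, so it is \emph{not} sign-definite on $\{Y<0,\,Z>0\}$ and Dulac's criterion does not apply with this multiplier. The paper fixes this by using $B(Y,Z)=Z^a$ with $a=(3m-p)/(p-m)$, chosen precisely so that the $Y$-term in the divergence cancels; what remains is $\bigl(\tfrac{4m}{p-m}-N+2\bigr)Z^a$, which is strictly negative exactly when $p>p_s$.

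Second, your trapping region is only sketched: the line $\{Y=-2/(p-m)\}$ is not a one-way barrier (on it $\dot Y=Z_0-Z$, so orbits exit the strip whenever $Z>Z_0$), and you do not verify that a line $Z=aY+b$ actually caps the region. The paper instead uses the explicit parabolic curve $\tfrac{N}{N-2}Y(mY+N-2)+Z=0$, which connects $P_0$ to $P_1$ and across which the flow points inward precisely when $p>p_s$. A subtlety you would also need to address is that this curve is tangent at $P_0$ to the unstable eigendirection $(-1,N)$, so a \emph{second-order} expansion of the unstable manifold (the paper finds $Z=-NY-\tfrac{Np}{N+2}Y^2+o(Y^2)$) is required to verify that the orbit actually starts inside the trapping region.
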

\begin{proof}
The proof follows closely the one of \cite[Lemma 5.2]{IMS23b}, but we give some details for the sake of completeness. The fact that $P_0$ is a saddle is immediate by inspecting the matrix of the linearization. Let us consider now the following curve and its normal direction
\begin{equation}\label{cyl}
\frac{N}{N-2}Y(mY+N-2)+Z=0, \quad \overline{n}=\left(\frac{N}{N-2}(2mY+N-2),1\right), 
\end{equation}
with $Y\in(-(N-2)/m,0)$, and which connects $P_0$ to $P_1$. The direction of the flow of the system \eqref{PSsystX0} across this curve is given by the sign of
$$
H(Y):=\frac{N(p_s-p)}{N-2}Y^2(mY+N-2)<0, 
$$
since $p>p_s$. This implies that the region $\mathcal{R}$ in the plane $\{X=0\}$ limited by the $Y$-axis and the graph of the curve \eqref{cyl}, that is,
$$
\mathcal{R}:=\left\{(Y,Z): Y\in\left[-\frac{N-2}{m},0\right], Z\in\left[0,-\frac{N}{N-2}Y(mY+N-2)\right]\right\},
$$
is positively invariant. We show next that the unique orbit contained in the unstable manifold of $P_0$ enters the region $\mathcal{R}$ for $p>p_s$. We observe that both the curve \eqref{cyl} and the unstable manifold of $P_0$ are tangent in a first approximation to the vector $(-1,N)$, hence we have to go to a second order approximation of the trajectory going out of $P_0$ in order to spot its dependence on $p$. We thus look for a Taylor expansion of the form 
\begin{equation}\label{interm16}
Z=-NY+KY^2+o(Y^2),
\end{equation}
and (following for example \cite[Section 2.7]{Shilnikov}) we require that the flow of the system \eqref{PSsystX0} across the curve \eqref{interm16} is of the form $o(Y^2)$, taking as normal vector $\overline{n}=(N-2KY,1)$. By computing the scalar product of the vector field of the system \eqref{PSsystX0} with $\overline{n}$ and replacing $Z$ from \eqref{interm16}, we are left with the expression of the flow given by
$$
G(Y)=-(KN+Np+2K)Y+K(m+2K+p)Y^3+o(Y^2),
$$
so that, in order to have an approximation up to the second order, we need to cancel out the coefficient of the quadratic term, that is,
$$
K=-\frac{Np}{N+2}.
$$
We thus observe that the trajectory of the unique orbit in the unstable manifold of $P_0$ varies in a decreasing way as a function $Z=Z(Y)$ with respect to $p$ in a neighborhood of $P_0$. Since it coincides with \eqref{cyl} for $p=p_s$, we conclude that it points inside the region $\mathcal{R}$ for $p>p_s$ and thus remains forever in this region. Moreover, we also notice by direct calculation that $P_2\in\mathcal{R}$.

We prove next that the system \eqref{PSsystX0} does not admit limit cycles for $p>p_s$. Since it is a planar system, we employ Dulac's Criteria (see for example \cite[Theorem 2, Section 3.9]{Pe}). To this end, by multiplying the vector field of \eqref{PSsystX0} by the function $B(Y,Z)=Z^a$ for some $a$ to be chosen later, and taking the divergence of the new vector field, we get
\begin{equation}\label{interm17}
\frac{\partial(Z^a\dot{Y})}{\partial Y}+\frac{\partial(Z^a\dot{Z})}{\partial Z}=[2(a+1)-(N-2)]Z^a+[(p-m)(a+1)-2m]YZ^a.
\end{equation}
Choosing $a=(3m-p)/(p-m)$, we cancel out the second term in \eqref{interm17} and the previous divergence is given by
$$
h(Z):=\left(\frac{4m}{p-m}-N+2\right)Z^a<0,
$$
provided $p>p_s$ (notice that $h(Z)=0$ exactly for $p=p_s$). Thus, Dulac's Criteria and the invariance of the axis $\{Z=0\}$ show that there are no limit cycles in the region $\mathcal{R}$. The stability of the critical point $P_2$ together with the Poincar\'e-Bendixon's theory (see for example \cite[Section 3.7]{Pe}) conclude the proof.
\end{proof}

\noindent \textbf{Remark. Stationary solutions.} For $p=p_s$, the curve \eqref{cyl} is an exact solution of the system \eqref{PSsystX0}. Undoing the transformation in terms of profiles, we arrive to the following family of stationary solutions
\begin{equation}\label{sol.sobolev}
U_C(x)=\left[\frac{(N-2)(N+\sigma)C}{(|x|^{2}+C)^2}\right]^{(N-2)/4m}, \qquad C>0.
\end{equation}
The reader is referred to \cite[Section 9]{IMS23b} for a detailed proof (with $m<1$ in that case) of the deduction of the solutions \eqref{sol.sobolev}. However, we readily notice by direct calculation that they remain solutions to Eq. \eqref{eq1} for $p=p_s$ also when $m>1$.

\medskip 

We next go to the system \eqref{PSinf1} and we analyze the invariant plane $\{x=0\}$. Since this system decouples when letting $x=0$, let us first replace the variable $z$ in \eqref{PSinf1} by $w=xz\geq0$, a change of variable that has been already employed in \cite[Section 4]{IS21}. With respect to the finite part of the phase space associated to the system \eqref{PSsyst}, which corresponds to $x>0$, there is no essential variation involved by this change of variable. However, in the plane $\{x=0\}$ we make thus a zoom on the orbits and observe better their behavior. Performing this change and only then letting $x=0$, we are left with the system
\begin{equation}\label{PSsystw0}
\left\{\begin{array}{ll}\dot{y}=-y^2+\frac{p-m}{2}y-w,\\ \dot{w}=(m+p-2)yw,\end{array}\right.
\end{equation}
where derivatives are taken with respect to the independent variable $\eta_1$ given in \eqref{indep2}, with the two critical points $Q_1'=(0,0)$ and $Q_5'=((p-m)/2,0)$ being the restrictions of $Q_1$ and $Q_5$ to the plane $\{x=0\}$. We have the following
\begin{lemma}\label{lem.w0}
The critical point $Q_5'$ is a saddle point in the system \eqref{PSsystw0} and the critical point $Q_1'$ is a saddle-node in the system \eqref{PSsystw0}. The unique orbit contained in the unstable manifold of $Q_5'$ connects to the critical point $Q_3$, with $y(\eta_1)$ decreasing for $\eta_1\in\real$. The orbits going out of $Q_1'$ connect all of them to the critical point $Q_3$ as well, with the function $y(\eta_1)$ having at most one maximum point along these orbits.
\end{lemma}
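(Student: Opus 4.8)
The plan is to combine the local analysis at the two critical points with a global study of the planar system \eqref{PSsystw0} in the invariant region $\{w\geq0\}$, in the spirit of the proof of Lemma~\ref{lem.X0}. From the Jacobian of \eqref{PSsystw0} one reads off that at $Q_5'=((p-m)/2,0)$ the eigenvalues are $-\tfrac{p-m}{2}<0$ and $\mu:=\tfrac{(m+p-2)(p-m)}{2}>0$ (positive since $p>m>1$), so $Q_5'$ is a saddle; its stable eigendirection lies along the invariant $y$-axis $\{w=0\}$, and its unstable eigendirection, oriented into $\{w>0\}$, points into $\{0<y<(p-m)/2\}$. At $Q_1'=(0,0)$ the eigenvalues are $\tfrac{p-m}{2}>0$ and $0$; passing to the eigenbasis $(y,w)=\xi_1(1,0)+\xi_2(2,p-m)$, so that $y=\xi_1+2\xi_2$ and $w=(p-m)\xi_2$, one gets $\dot\xi_2=(m+p-2)(\xi_1+2\xi_2)\xi_2$, hence on the one-dimensional center manifold $\{\xi_1=O(\xi_2^2)\}$ the reduced flow is $\dot\xi_2=2(m+p-2)\xi_2^2+O(\xi_2^3)$, with non-vanishing quadratic coefficient, so $Q_1'$ is a saddle-node. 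Since $\xi_2=w/(p-m)\geq0$ in our region, the relevant branch of the center manifold lies in $\{w>0\}$ and the flow on it is directed away from $Q_1'$; moreover $\dot\xi_2$ has the sign of $y$, so the orbits leaving $Q_1'$ into $\{w>0\}$ do so into the quadrant $\{y>0,\,w>0\}$, filling the parabolic sector, while the remaining branch of the unstable manifold runs along the $y$-axis towards $Q_5'$.

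The global analysis rests on three facts in $\{w\geq0\}$: the only critical points are $Q_1'$ and $Q_5'$, both on the invariant line $\{w=0\}$; writing $\phi(y,w):=w-y(\tfrac{p-m}{2}-y)$ one has $\dot y=-\phi$, so $\{\dot y=0\}$ is the parabola $\{\phi=0\}$, whose branch in $\{w>0\}$ is the arc $\Gamma=\{\phi=0,\,0<y<(p-m)/2\}$; and $\dot\phi|_{\Gamma}=\dot w=(m+p-2)yw>0$. Consequently every orbit contained in $\{w>0\}$ crosses $\Gamma$ only towards increasing $\phi$, so $\dot y$ can change sign only once along it, and only from $+$ to $-$: along such an orbit $y$ has at most one critical point, which is a maximum. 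I also record that $\phi>0$ (i.e. $\dot y<0$) everywhere on $\{y\leq0,\,w>0\}$, that $\dot w$ has the sign of $y$, and that therefore $y$ is eventually monotone along the orbits of interest, which by itself excludes limit cycles.

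It then remains to trace the two families. For the unstable orbit of $Q_5'$: evaluating $\phi$ along the unstable eigendirection shows that it leaves $Q_5'$ into $\{\phi>0\}$, hence $\dot y<0$ there; since $\Gamma$ cannot be crossed towards decreasing $\phi$, the orbit stays in $\{\phi>0\}$, so $\dot y<0$ for all $\eta_1\in\real$ and $y$ decreases strictly from $(p-m)/2$ (as $\eta_1\to-\infty$) through $0$ to $-\infty$; once $y<0$ one has $\dot w<0$, forcing $w\to0$, which is exactly the asymptotics associated with $Q_3$. For an orbit leaving $Q_1'$ into $\{w>0\}$: it has at most one maximum of $y$, so $y$ is eventually monotone; it cannot be eventually increasing, since $\dot y<0$ on $\{y\geq(p-m)/2\}$ prevents $y$ from growing past $(p-m)/2$, the orbit cannot converge to $Q_5'$ (whose stable manifold lies in $\{w=0\}$) and cannot converge to $Q_1'$ (no orbit in $\{w>0\}$ returns to it); so $y$ is eventually strictly decreasing, and it must become negative, for if $y$ stayed $\geq0$ then $y\to y_\infty\geq0$ and $\dot w=(m+p-2)yw$ would force either $w\to\infty$ (hence $\dot y\to-\infty$ and $y\to-\infty$, a contradiction) or $w\to0$ (impossible, as $w$ is positive and increasing while $y>0$). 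Once $y<0$, both $y$ and $w$ decrease strictly thereafter; a bounded monotone orbit would converge to a rest point, but no rest point has $y<0$, so the orbit is unbounded, i.e. $y\to-\infty$, and then $\dot w=(m+p-2)yw$ drives $w\to0$: the orbit connects to $Q_3$.

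The step I expect to be the main obstacle is this last one — ruling out every pathological $\omega$-limit behaviour (a limit cycle, convergence to a different critical point at infinity, or $w$ blowing up while $y$ stays bounded) and pinning the limit down to $Q_3$ — since it must be done through the sign structure of the vector field rather than by a soft compactness argument. The center-manifold computation at $Q_1'$ is the other slightly delicate point: it is needed both to identify $Q_1'$ precisely as a saddle-node and to be sure that the orbits under consideration genuinely emanate from $Q_1'$ into $\{y>0,\,w>0\}$.
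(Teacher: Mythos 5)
Your proposal is correct and follows essentially the same route as the paper: linearization at $Q_5'$ and a center-manifold computation identifying $Q_1'$ as a saddle-node, then a phase-plane argument based on the isocline $\{\dot y=0\}$ and the one-way sign of the flow across it, yielding at most one maximum of $y$ and a monotone descent into $\{y<0\}$ towards $Q_3$. The only real divergence is the sub-step excluding that an orbit stalls with $y\geq0$ while $w\to\infty$: the paper argues via the inverse function theorem applied to $w(y)$ and the finiteness of $dw/dy$, whereas you use the simpler observation that $w\to\infty$ forces $\dot y\to-\infty$ (and the absence of equilibria with $w>0$), which is a legitimate minor simplification; just make sure you also invoke it explicitly for the orbit emanating from $Q_5'$, where you currently assert the crossing of $\{y=0\}$ without repeating the argument.
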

\begin{proof}
The matrix of the linearization of the system \eqref{PSsystw0} in a neighborhood of $Q_5'$ is
$$
M(Q_5')=\left(
          \begin{array}{cc}
            -\frac{p-m}{2} & -1 \\[1mm]
            0 & \frac{(m+p-2)(p-m)}{2} \\
          \end{array}
        \right),
$$
thus $Q_5'$ is a saddle point. Consider now the isocline where $\dot{y}=0$, that is,
\begin{equation}\label{iso2}
-y^2+\frac{p-m}{2}y-w=0, \qquad {\rm with \ normal} \qquad \overline{n}=\left(-2y+\frac{p-m}{2},-1\right),
\end{equation}
which lies in the region $w\geq0$ if and only if $0\leq y\leq (p-m)/2$. The flow of the system \eqref{PSsystw0} across the isocline is given by the sign of the expression $-(m+p-2)yw<0$, hence it points out into the increasing direction of $w$. This means that an orbit entering the region $\dot{y}<0$ remains there forever. Observing that, in a neighborhood of $Q_5'$, the isocline has the slope
\begin{equation}\label{interm15bis}
w\sim-\frac{p-m}{2}\overline{y}, \qquad \overline{y}=y-\frac{p-m}{2},
\end{equation}
while the slope of the actual orbit contained in the unstable manifold of the point $Q_5'$ is given by the eigenvector corresponding to the second eigenvalue of $M(Q_5')$, that is,
\begin{equation}\label{interm16bis}
w\sim-\frac{(m+p-1)(p-m)}{2}\overline{y},
\end{equation}
we deduce from \eqref{interm15bis}, \eqref{interm16bis} and the fact that $m+p-1>1$ that the orbit stemming from $Q_5'$ enters the region $\dot{y}<0$ (superior to the isocline \eqref{iso2}) and stays forever inside it. This proves that $y(\eta_1)$ is decreasing for $\eta_1\in\real$ along this orbit. It remains to show that this orbit crosses the axis $\{y=0\}$ at a finite point, that is, there is $\eta_1\in\real$ such that $y(\eta_1)=0$. Assume for contradiction that this is not the case, hence $y(\eta_1)>0$ for any $\eta_1\in\real$. We notice from the system \eqref{PSsystw0} that $\dot{y}(\eta_1)<0$ and $\dot{w}(\eta_1)>0$ for any $\eta_1\in\real$, so that, there exist
$$
y_0:=\lim\limits_{\eta_1\to\infty}y(\eta_1)\in\left[0,\frac{p-m}{2}\right], \qquad w_0:=\lim\limits_{\eta_1\to\infty}w(\eta_1)\in(0,\infty].
$$
If $w_0$ is finite, then $(y_0,w_0)$ would be a finite critical point of the system \eqref{PSsystw0} with $y\geq0$, $w>0$ and there is no such point. Thus, $w_0=\infty$ and thus the orbit will form a vertical asymptote from the right as $y\to y_0$. Moreover, since $y(\eta_1)$ is monotone (hence one-to-one) along the orbit stemming from $Q_5'$, for $\eta_1\in\real$, we can apply the inverse function theorem and express $w$ as a function $w(y)$, with derivative
$$
\frac{dw}{dy}=\frac{(m+p-2)yw(y)}{-y^2+(p-m)y/2-w(y)}.
$$
Passing to the limit as $y\to y_0$ and taking into account that $w(y)\to+\infty$, we find that
$$
\lim\limits_{y\to y_0, y>y_0}w'(y)=-(m+p-2)y_0,
$$
which is finite, contradicting the vertical asymptote from the right of the function $w(y)$ at $y=y_0$. We thus infer that there exists $\overline{\eta}_1\in\real$ such that $y(\overline{\eta}_1)=0$ on the orbit contained in the unstable manifold of $Q_5'$, and then $y(\eta_1)<0$ for any $\eta_1>\overline{\eta}_1$. Furthermore, the latter, together with the second equation of the system \eqref{PSsystw0}, implies that $\dot{w}(\eta_1)<0$ for any $\eta_1>\overline{\eta}_1$. This shows that the orbit contained in the unstable manifold of $Q_5'$ has a critical point as limit as $\eta_1\to\infty$, lying in the half-plane $\{y<0\}$ and with finite $w$-coordinate. There are no such finite critical points, and it is then easy to derive that
$$
\lim\limits_{\eta_1\to\infty}y(\eta_1)=-\infty, \qquad \lim\limits_{\eta_1\to\infty}\frac{w(\eta_1)}{y(\eta_1)}=0,
$$
thus the orbit will reach the critical point $Q_3$.

\medskip

We are left with the critical point $Q_1'$. The matrix of the linearization of the system \eqref{PSsystw0} in a neighborhood of $Q_1'$ is given by
$$
M(Q_1')=\left(
          \begin{array}{cc}
            \frac{p-m}{2} & -1 \\[1mm]
            0 & 0 \\
          \end{array}
        \right),
$$
thus we have a one-dimensional unstable manifold (contained in the invariant $y$-axis) and one-dimensional center manifolds. Applying the theory in \cite[Sections 2.4 and 2.5]{Carr} in order to analyze these center manifolds, we find that the Taylor expansion of them up to the second order is given by
$$
w=\frac{p-m}{2}y-(m+p-1)y^2+o(y^2),
$$
while the flow on these center manifolds in a neighborhood of the origin is given by
$$
\dot{w}=\frac{(m+p-2)(p-m)}{2}y^2+o(y^2)>0.
$$
Thus, all the center manifolds have an unstable flow, all these trajectories going out into the positive half-plane $\{y>0\}$ tangent to the line
$$
w=\frac{p-m}{2}y.
$$
They have to enter the closed region limited by the axis $\{w=0\}$ (which is invariant) and the isocline \eqref{iso2}, that is, the region where $\dot{y}>0$. Since the flow on the isocline \eqref{iso2} points into an unique direction (towards the open, exterior region where $\dot{y}<0$), it follows that all the orbits contained in center manifolds going out of $Q_1'$ will cross the isocline \eqref{iso2} at some point (and they can do this only once, due to the direction of the flow across \eqref{iso2}) and then enter and stay forever in the region where $\dot{y}<0$. Since they are bounded from above by the unique orbit contained in the unstable manifold of $Q_5'$, they have then to cross the axis $\{y=0\}$ at some finite point and enter the half-plane $\{y<0\}$. The same considerations as in the final part of the proof related to the orbit contained in the unstable manifold of $Q_5'$ show that all these orbits will end up connecting to $Q_3$, and $y(\eta_1)$ has a unique maximum point, which is exactly at the value of $\eta_1$ for which the orbit intersects the isocline \eqref{iso2}.
\end{proof}

\section{Proof of Theorem \ref{th.1}}\label{sec.main}

In this section we prove Theorem \ref{th.1}. Let then fix $p>p_s$ throughout all this section. We begin with several preparatory results on the global analysis of the system \eqref{PSsyst}.
\begin{lemma}\label{lem.tang}
Let $(X(\eta),Y(\eta),Z(\eta))$ with $\eta\in\real$ be a trajectory of the system \eqref{PSsyst}. If there is $\eta_0\in\real$ such that $Y(\eta_0)=\dot{Y}(\eta_0)=0$, then the trajectory is the explicit one
\begin{equation}\label{const.orbit}
X(\eta)=Z(\eta), \quad Y(\eta)=0, \quad {\rm for \ any} \ \eta\in\real.
\end{equation}
If in change, there is $\eta_1\in\real$ such that $Y(\eta_1)=-2/(p-m)$, $\dot{Y}(\eta_1)=0$, then the trajectory is the explicit one
\begin{equation}\label{stat.orbit}
Z(\eta)=Z_0:=\frac{c_s^{p-m}}{m}\frac{2[p(N-2)-mN]}{(p-m)^2}, \quad Y(\eta)=-\frac{2}{p-m}, \quad {\rm for \ any} \ \eta\in\real.
\end{equation}
\end{lemma}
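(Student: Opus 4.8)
The plan is to handle the two cases in parallel by the same device: at the distinguished parameter value, the hypothesis $\dot Y=0$ together with the prescribed value of $Y$ forces the remaining coordinates onto an \emph{explicit flow-invariant line} of \eqref{PSsyst}, after which uniqueness for the Cauchy problem of the (polynomial) system pins the whole trajectory to that line.

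For the first statement I would evaluate the middle equation of \eqref{PSsyst} at $\eta=\eta_0$. Since $Y(\eta_0)=0$, the right-hand side collapses to $X(\eta_0)-Z(\eta_0)$, so $\dot Y(\eta_0)=0$ is equivalent to $X(\eta_0)=Z(\eta_0)=:s_0$. I would then note that on $\{Y=0\}$ the first and third equations read $\dot X=2X$, $\dot Z=2Z$, while the second reads $\dot Y=X-Z$; hence $\eta\mapsto\bigl(s_0e^{2(\eta-\eta_0)},\,0,\,s_0e^{2(\eta-\eta_0)}\bigr)$ is a solution of \eqref{PSsyst} on all of $\real$. It agrees with the given trajectory at $\eta_0$, so by uniqueness for the polynomial (hence locally Lipschitz) vector field the two coincide for every $\eta\in\real$, which is exactly \eqref{const.orbit}. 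The borderline subcase $s_0=0$ is not excluded: then the explicit solution is the equilibrium $P_0$, still of the form \eqref{const.orbit}.

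For the second statement the key algebraic point is that $Y=-2/(p-m)$ is precisely the value at which the $X$-dependent part of $\dot Y$, namely $X\bigl(1+\tfrac{p-m}{2}Y\bigr)$, vanishes identically. Evaluating $\dot Y(\eta_1)=0$ therefore eliminates $X$ and leaves $\tfrac{2(N-2)}{p-m}-Z(\eta_1)-\tfrac{4m}{(p-m)^2}=0$, i.e. $Z(\eta_1)=\tfrac{2[p(N-2)-mN]}{(p-m)^2}=\tfrac{c_s^{p-m}}{m}$, which is the value $Z_0$ (also the third coordinate of $P_2$). On the line $\{Y=-2/(p-m),\ Z=Z_0\}$ one checks directly that $\dot Z=Z_0\bigl(2+(p-m)Y\bigr)=0$, that $\dot Y=0$ (this is the very relation defining $Z_0$), and that $\dot X=\tfrac{2(p-1)}{p-m}X$; hence $\eta\mapsto\bigl(X(\eta_1)\,e^{\frac{2(p-1)}{p-m}(\eta-\eta_1)},\,-\tfrac{2}{p-m},\,Z_0\bigr)$ is a global solution of \eqref{PSsyst} through $(X(\eta_1),-2/(p-m),Z_0)$, and the same uniqueness argument identifies the trajectory with \eqref{stat.orbit} (with the subcase $X(\eta_1)=0$ landing on the equilibrium $P_2$, consistently with the conclusion).

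There is essentially no hard step here: the substance is the two elementary simplifications of $\dot Y$ (at $Y=0$ and at $Y=-2/(p-m)$) and the verification that the two resulting lines are invariant. The only point I would state with care is the invocation of uniqueness for \eqref{PSsyst}, which is legitimate since the right-hand side is a polynomial vector field, so that exhibiting an explicit solution through the relevant point suffices to determine the whole orbit.
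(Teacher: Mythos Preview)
Your proof is correct and follows essentially the same approach as the paper's: in each case one evaluates the second equation of \eqref{PSsyst} at the given value of $Y$ to pin down the remaining coordinate, observes that the resulting point lies on an explicit invariant line (which the paper simply asserts is a trajectory, while you verify this directly), and then invokes uniqueness for the Cauchy problem. Your version is slightly more detailed in exhibiting the explicit parametrizations and in justifying uniqueness via local Lipschitz continuity, but the argument is the same.
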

\begin{proof}
In the former case, we deduce from the fact that $Y(\eta_0)=\dot{Y}(\eta_0)=0$ and the second equation of the system \eqref{PSsyst} that $X(\eta_0)=Z(\eta_0)$. Since \eqref{const.orbit} is a trajectory of the system \eqref{PSsyst}, the uniqueness of a trajectory passing through the point $(X(\eta_0),0,X(\eta_0))$ (which is not critical for \eqref{PSsyst}) implies the coincidence with \eqref{const.orbit}. In the latter case, we deduce from the fact that $Y(\eta_1)=-2/(p-m)$, $\dot{Y}(\eta_1)=0$ and the second equation of the system \eqref{PSsyst} that 
$$
Z(\eta_1)=\frac{2[p(N-2)-mN]}{(p-m)^2},
$$
and since \eqref{stat.orbit} is a trajectory of the system \eqref{PSsyst}, a similar uniqueness argument completes the proof.
\end{proof}

\noindent \textbf{Remark.} Notice that the orbit \eqref{const.orbit} corresponds to the constant profile $f(\xi)=k_*$ of the solution $U_*$ given in \eqref{const.sol}, while the orbit \eqref{stat.orbit} corresponds to the profile $U_s$ of the stationary solution given in \eqref{stat.sol}. 

\begin{lemma}\label{lem.maxmin}
Let $(X(\eta),Y(\eta),Z(\eta))$ with $\eta\in\real$ be a trajectory of the system \eqref{PSsyst}. Then, the function $\eta\mapsto Y(\eta)$ cannot have a local maximum in the region
\begin{equation}\label{reg1}
\mathcal{R}_1=\left\{X\geq\frac{2(N-2)}{p-1}, -\frac{2}{p-m}<Y<0\right\},
\end{equation}
and it cannot have a local minimum in the region
\begin{equation}\label{reg2}
\mathcal{R}_2=\left\{X\geq\frac{2(N-2)}{p-1}, -\infty<Y<-\frac{2}{p-m}\right\}.
\end{equation}
\end{lemma}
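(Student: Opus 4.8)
The plan is to rule out the two forbidden types of extrema by a second-order test carried out directly on the system \eqref{PSsyst}. Suppose $\eta_0\in\real$ is a critical point of the function $\eta\mapsto Y(\eta)$, i.e.\ $\dot Y(\eta_0)=0$. By the second equation of \eqref{PSsyst}, this is equivalent to
$$
Z(\eta_0)=X(\eta_0)\Bigl(1+\tfrac{p-m}{2}Y(\eta_0)\Bigr)-(N-2)Y(\eta_0)-mY(\eta_0)^2 .
$$
Differentiating the second equation of \eqref{PSsyst} once more and evaluating at $\eta_0$, the terms proportional to $\dot Y(\eta_0)$ drop out, and using $\dot X=X(2-(m-1)Y)$ and $\dot Z=Z(2+(p-m)Y)$ one is left with
$$
\ddot Y(\eta_0)=\dot X(\eta_0)\Bigl(1+\tfrac{p-m}{2}Y(\eta_0)\Bigr)-\dot Z(\eta_0).
$$

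Next I would substitute into this the expressions for $\dot X(\eta_0)$, $\dot Z(\eta_0)$ and for $Z(\eta_0)$, exploiting the elementary identity $2+(p-m)Y=2\bigl(1+\tfrac{p-m}{2}Y\bigr)$, which is exactly what makes the computation collapse. Writing for brevity $a:=1+\tfrac{p-m}{2}Y(\eta_0)$, a short and routine simplification yields the factorized formula
$$
\ddot Y(\eta_0)=Y(\eta_0)\,a\,\bigl(2(N-2)+2mY(\eta_0)-(p-1)X(\eta_0)\bigr),
$$
so that the sign of $\ddot Y(\eta_0)$ is simply the product of the signs of the three factors $Y(\eta_0)$, $a$ and $b:=2(N-2)+2mY(\eta_0)-(p-1)X(\eta_0)$.

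It then only remains to read off these signs in each of the two regions, using that a $C^2$ function cannot have a local maximum (resp.\ minimum) at a point where its second derivative is positive (resp.\ negative). In $\mathcal{R}_1$ we have $Y(\eta_0)<0$; the constraint $Y(\eta_0)>-\tfrac{2}{p-m}$ gives $a>0$; and the constraint $X(\eta_0)\geq\tfrac{2(N-2)}{p-1}$ together with $Y(\eta_0)<0$ gives $b=\bigl(2(N-2)-(p-1)X(\eta_0)\bigr)+2mY(\eta_0)<0$. Hence $\ddot Y(\eta_0)>0$, so every critical point of $Y$ in $\mathcal{R}_1$ is a strict local minimum and $Y$ cannot attain a local maximum there. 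In $\mathcal{R}_2$ one again has $Y(\eta_0)<0$ and, exactly as before, $b<0$, but now $Y(\eta_0)<-\tfrac{2}{p-m}$ forces $a<0$, whence $\ddot Y(\eta_0)<0$; so every critical point of $Y$ in $\mathcal{R}_2$ is a strict local maximum and $Y$ cannot attain a local minimum there.

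The only nontrivial step is the algebraic simplification producing the factorized expression for $\ddot Y(\eta_0)$; everything else is a sign inspection. It is worth noting that it is precisely the thresholds $X=\tfrac{2(N-2)}{p-1}$ and $Y=-\tfrac{2}{p-m}$ appearing in the definitions of $\mathcal{R}_1$ and $\mathcal{R}_2$ that make the factors $b$ and $a$ change sign, which is why the regions are cut out in exactly this way (and why keeping the inequalities on $Y$ strict guarantees $\ddot Y(\eta_0)\neq 0$, so the critical points there are non-degenerate).
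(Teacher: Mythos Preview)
Your proof is correct and follows essentially the same route as the paper's: differentiate the $\dot Y$-equation at a critical point, substitute $\dot X,\dot Z$ and the expression for $Z$ coming from $\dot Y=0$, and arrive at the factorization $\ddot Y(\eta_0)=[2+(p-m)Y(\eta_0)]\,Y(\eta_0)\,\bigl[(N-2)+mY(\eta_0)-\tfrac{p-1}{2}X(\eta_0)\bigr]$, which is exactly your $Y\cdot a\cdot b$ up to harmless constant factors. The only stylistic difference is that the paper argues by contradiction (assuming $\ddot Y\le 0$ at a putative maximum and deriving $X<\tfrac{2(N-2)}{p-1}$), whereas you directly compute the sign of $\ddot Y$ in each region; the content is identical.
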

\begin{proof}
Assume for contradiction that there is $\eta_0\in\real$ such that $Y(\eta_0)$ is a local maximum for the function $\eta\mapsto Y(\eta)$ with $(X(\eta_0),Y(\eta_0),Z(\eta_0))\in\mathcal{R}_1$. We thus have $-2/(p-m)<Y(\eta_0)<0$, $\dot{Y}(\eta_0)=0$ and $Y''(\eta_0)\leq0$. We next differentiate the second equation in \eqref{PSsyst} with respect to $\eta$ to get
\begin{equation*}
\begin{split}
Y''(\eta_0)&=\dot{X}(\eta_0)\left[1+\frac{p-m}{2}Y(\eta_0)\right]-\dot{Z}(\eta_0)\\
&=X(\eta_0)[2-(m-1)Y(\eta_0)]\left[1+\frac{p-m}{2}Y(\eta_0)\right]-Z(\eta_0)[2+(p-m)Y(\eta_0)]\\
&=[2+(p-m)Y(\eta_0)]\left[X(\eta_0)\left(1-\frac{m-1}{2}Y(\eta_0)\right)-Z(\eta_0)\right]
\end{split}
\end{equation*}
Since $\dot{Y}(\eta_0)=0$, we can replace $Z(\eta_0)$ from the second equation of the system \eqref{PSsyst} and continue the previous calculation as follows:
\begin{equation}\label{interm21}
\begin{split}
Y''(\eta_0)&=[2+(p-m)Y(\eta_0)]\left[X(\eta_0)-\frac{m-1}{2}X(\eta_0)Y(\eta_0)+(N-2)Y(\eta_0)\right.\\&\left.+mY^2(\eta_0)-X(\eta_0)-\frac{p-m}{2}X(\eta_0)Y(\eta_0)\right]\\
&=[2+(p-m)Y(\eta_0)]Y(\eta_0)\left[-\frac{p-1}{2}X(\eta_0)+N-2+mY(\eta_0)\right].
\end{split}
\end{equation}
Since $-2/(p-m)<Y(\eta_0)<0$, the third factor in the expression of $Y''(\eta_0)$ in \eqref{interm21} must be non-negative in order to ensure that $Y''(\eta_0)\leq0$. But, taking into account that $Y(\eta_0)<0$, this is only possible if
$$
X(\eta_0)<\frac{2(N-2)}{p-1},
$$
in contradiction with the definition of the region $\mathcal{R}_1$ given in \eqref{reg1}. This contradiction shows that there are no local maxima in the region $\mathcal{R}_1$. A completely similar argument with a point of minimum $Y(\eta_1)$ in the region $\mathcal{R}_2$ gives that once more the third term in \eqref{interm21} must be non-negative and we reach exactly the same contradiction.
\end{proof}
Our next two preparatory lemmas concern the global behavior of trajectories of the system \eqref{PSsyst} in the regions of the $(X,Y,Z)$ space limited by some important planes.
\begin{lemma}\label{lem.globalQ3}
Let $(X(\eta),Y(\eta),Z(\eta))$ with $\eta\in\real$ be a trajectory of the system \eqref{PSsyst}. If there exists $\eta_0\in\real$ such that
$$
Y(\eta_0)<-\frac{N-2}{m},
$$
then the trajectory enters the stable node $Q_3$. 
\end{lemma}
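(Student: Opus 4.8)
The plan is to show that the hypothesis $Y(\eta_0)<-\frac{N-2}{m}$ traps the trajectory in a region where $Y$ is strictly decreasing, to deduce $Y(\eta)\to-\infty$, and then to control the growth of $X$ against $|Y|$ sharply enough to identify the forward limit as $Q_3$ in the projective chart \eqref{change3}.

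\medskip

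First I would check that the set $\mathcal{R}:=\{X\geq0,\ Z\geq0,\ Y<-(N-2)/m\}$ is positively invariant and that $\dot Y<0$ on it. Writing the second equation of \eqref{PSsyst} as $\dot Y=X\bigl(1+\tfrac{p-m}{2}Y\bigr)-Y(N-2+mY)-Z$, one sees that when $Y<-(N-2)/m$ each summand is $\leq0$ and the middle one is $<0$: indeed $-Y>0$ and $N-2+mY<0$, while the equivalence $p>p_s\iff(p-m)(N-2)>4m$ gives $1+\tfrac{p-m}{2}Y<-1$. Since $Y(\eta_0)<-(N-2)/m$, the trajectory stays in $\{Y\leq Y(\eta_0)\}\subset\mathcal{R}$ on its forward existence interval $[\eta_0,\eta^*)$, with $Y$ strictly decreasing. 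On the same interval $\dot X=X(2-(m-1)Y)>0$, so $X$ increases, and since $Y<-(N-2)/m<-2/(p-m)$ (again by $p>p_s$) we have $\dot Z=Z(2+(p-m)Y)\leq0$, so $Z$ decreases and is bounded. Next I would prove $Y(\eta)\to-\infty$ as $\eta\to\eta^{*-}$: if not, $Y$ is monotone and bounded and $Z$ is monotone and bounded, and from $\dot X\leq CX$ with $C=2-(m-1)\inf Y<\infty$ Grönwall keeps $X$ bounded on finite subintervals; if $\eta^*<\infty$ this makes $(X,Y,Z)$ converge to a finite point, contradicting maximality of $\eta^*$, while if $\eta^*=+\infty$ either $X$ stays bounded and the trajectory converges to a critical point of \eqref{PSsyst} with $Y$-coordinate $<-(N-2)/m$ (none of $P_0,P_1,P_2,P_3$ qualifies), or $X\to+\infty$ and then $\dot Y\leq\bigl(1+\tfrac{p-m}{2}Y(\eta_0)\bigr)X+C_1\to-\infty$ forces $Y\to-\infty$; in all cases a contradiction.

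\medskip

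The decisive step is $X/|Y|\to0$. Rather than the raw ratio, I would use the rescaled quantity $\tilde r:=X\,|Y|^{-(m-1)/m}$, the exponent chosen so that the terms linear in $Y$ cancel; a direct computation then yields
$$\frac{d}{d\eta}\ln\tilde r=\left(2+\frac{(m-1)(N-2)}{m}\right)+\frac{m-1}{m}\cdot\frac{X-Z}{|Y|}-\frac{(m-1)(p-m)}{2m}\,X.$$
Since $Z\geq0$, the middle term is $\leq\tfrac{m-1}{m}\,X/|Y|$, which (because $|Y|\to\infty$) is eventually dominated by $\tfrac{(m-1)(p-m)}{4m}X$; as $X\to\infty$ the whole right-hand side becomes $\leq0$ for $\eta$ large. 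Hence $\tilde r$ is eventually non-increasing, so it is bounded, and therefore $X/|Y|=\tilde r\,|Y|^{-1/m}\to0$. Combined with $1/Y\to0$ and $Z/Y\to0$ (from $Z$ bounded and $|Y|\to\infty$), this shows that in the chart \eqref{change3} the trajectory converges to $(x,z,w)=(0,0,0)$, i.e.\ to $Q_3$; by Lemma \ref{lem.Q23}, $Q_3$ is a stable node, so the trajectory enters it. If $X(\eta_0)=0$ the orbit lies in the invariant plane $\{X=0\}$ and the same argument applies verbatim, the step on $\tilde r$ being vacuous and the ``no critical point with $Y<-(N-2)/m$'' remark applying to \eqref{PSsystX0}.

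\medskip

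The step I expect to be the main obstacle is precisely this control of $X/|Y|$: the naive ratio satisfies a differential inequality whose ``good'' negative term only overcomes the competing terms once $|Y|$ is large, and over a possibly finite $\eta$-interval a purely exponential decay bound does not suffice to push the ratio to $0$. The device of passing to $\tilde r=X|Y|^{-(m-1)/m}$ — for which mere boundedness, obtained from a sign condition, is enough — is what makes the argument close; and one must stay alert throughout to the fact that $\eta^*$ may be finite, the trajectory escaping to infinity in finite ``time'', which is exactly the phase-plane shadow of the profile vanishing at a finite $\xi_0$ with $(f^m)'(\xi_0)<0$.
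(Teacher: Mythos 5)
Your proof is correct, and its first half (positive invariance of $\{Y<-(N-2)/m\}$ via the same sign computations, strict monotonicity of $X$, $Y$, $Z$ there, and exclusion of finite limit points to force $Y\to-\infty$) coincides with the paper's argument. Where you genuinely diverge is in the decisive step showing that $Y$ dominates $X$, so that the limit is $Q_3$ rather than a point with nonzero $\overline X$-component. The paper splits into $X_\infty<\infty$ (immediate) and $X_\infty=\infty$, and in the latter case inverts $X(\eta)$ and integrates the orbit equation $dY/dX\sim(2mY-(p-m)X)/(2(m-1)X)$ ``keeping only the dominating terms'', obtaining $Y\sim CX^{m/(m-1)}+\frac{p-m}{2}X$ with $C<0$ and hence $Y/X\to-\infty$. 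Your quantity $\tilde r=X|Y|^{-(m-1)/m}$ is exactly the ratio whose boundedness that asymptotic integration encodes ($|Y|\gtrsim X^{m/(m-1)}$), but you derive it from an exact logarithmic-derivative identity and a sign condition, which buys two things: you avoid the informal ``dominant terms'' reduction and the unargued sign of the integration constant, and your argument is insensitive to whether the forward escape time $\eta^*$ is finite — a possibility the paper glosses over by writing all limits as $\eta\to\infty$ (trajectories reaching $Q_3$ do in fact correspond to finite $\eta^*=\ln\xi_0$ in the original time). You also replace the paper's appeal to \cite[Lemma 2.9]{IL13} (to rule out $Y_\infty$ finite with $X_\infty=\infty$) by the elementary observation that $\dot Y\to-\infty$ is incompatible with $Y$ bounded; both work.

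Two small points to tighten. First, your monotonicity argument for $\tilde r$ is phrased under ``as $X\to\infty$''; since $X$ is increasing, either $X\to\infty$ (your case) or $X$ stays bounded, in which case $X/|Y|\to0$ is immediate because $|Y|\to\infty$ — state this dichotomy explicitly so the boundedness of $\tilde r$ is covered in all cases. Second, when you pass to the chart \eqref{change3}, recall that for $Q_3$ the relevant system \eqref{PSinf2} carries the reversed time orientation (plus signs), so ``convergence to the origin of the chart'' should be read as convergence of the direction $(X,Y,Z)/\|(X,Y,Z)\|$ to $(0,-1,0)$, i.e. $X/|Y|\to0$, $Z/|Y|\to0$, $|Y|\to\infty$, which is precisely what you proved and is the same identification the paper uses; together with the nodal stability from Lemma \ref{lem.Q23} this closes the argument.
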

\begin{proof}
Let us consider the plane $\{Y=-(N-2)/m\}$. The direction of the flow of the system \eqref{PSsyst} across this plane (with normal direction $(0,1,0)$) is given by the sign of the expression
$$
E(X,Z)=\frac{mN-p(N-2)}{2m}X-Z<0,
$$
since $p>p_s$. We thus conclude that $Y(\eta)<-(N-2)/m$ for any $\eta\in(\eta_0,\infty)$. Noticing also that
$$
\frac{2}{p-m}-\frac{N-2}{m}=\frac{mN-p(N-2)}{m(p-m)}<0,
$$
for any $p>p_s$, we infer from the first and third equations of the system \eqref{PSsyst} that $\dot{X}(\eta)>0$ and $\dot{Z}(\eta)<0$ for any $\eta\in(\eta_0,\infty)$. We also have
\begin{equation}\label{interm18}
\dot{Y}(\eta)=-Y(\eta)(mY(\eta)+N-2)+X(\eta)\left(1+\frac{p-m}{2}Y(\eta)\right)-Z(\eta)<0,
\end{equation}
provided $Y(\eta)<-(N-2)/m$, whence all the coordinates $(X(\eta),Y(\eta),Z(\eta))$ are monotone for $\eta\in(\eta_0,\infty)$ and thus converge to limits $(X_{\infty},Y_{\infty},Z_{\infty})$ as $\eta\to\infty$, where $0\leq Z_{\infty}\leq Z(\eta_0)<\infty$. Observe that there is no finite critical point with these properties, so that either $Y_{\infty}=-\infty$, or $X_{\infty}=+\infty$. 

Assume for contradiction that $Y_{\infty}>-\infty$, and thus necessarily $X_{\infty}=+\infty$. In this case, we readily obtain from \eqref{interm18} that $\dot{Y}(\eta)\to\infty$ as $\eta\to\infty$, which is a contradiction with the fact that $Y_{\infty}$ is finite, as it can be seen from \cite[Lemma 2.9]{IL13} stating that there should be a subsequence $(\eta_j)_{j\geq1}$ such that $\eta_j\to\infty$ and $\dot{Y}(\eta_j)\to0$ as $j\to\infty$. It thus follows that $Y_{\infty}=-\infty$. If $X_{\infty}<\infty$, then the critical point $(X_{\infty},-\infty,Z_{\infty})$ is identified as $Q_3$ on the Poincar\'e hypersphere, since $Y(\eta)\to-\infty$ and dominates over the other two components. In the case $X_{\infty}=\infty$, we infer from the monotonicity of $X(\eta)$ and the inverse function theorem that, for $\eta\in(\eta_0,\infty)$, we can invert the dependence $X(\eta)$ and thus define a function $Y=Y(X)$ along the given trajectory, which, for $X$ very large, satisfies (keeping only the dominating terms)
$$
\frac{dY}{dX}\sim\frac{(p-m)XY/2-mY^2}{-(m-1)XY}=\frac{2mY-(p-m)X}{2(m-1)X}.
$$
By integration, the previous equivalence leads to
$$
Y(\eta)\sim CX^{m/(m-1)}(\eta)+\frac{p-m}{2}X(\eta), \qquad C<0, \qquad {\rm as} \ \eta\to\infty.
$$
Since $m>m-1$, we still infer that $Y(\eta)/X(\eta)\to-\infty$ as $\eta\to\infty$, so that our given trajectory connects to the node $Q_3$.
\end{proof}
\begin{lemma}\label{lem.globalQ1}
Let $(X(\eta),Y(\eta),Z(\eta))$ with $\eta\in\real$ be a trajectory of the system \eqref{PSsyst}. If there is $\eta_*\in\real$ such that 
$$
-\frac{N-2}{m}<Y(\eta)<0, \qquad {\rm for \ any} \ \eta\in(\eta_*,\infty),
$$
then the trajectory connects to the critical point $Q_1$.
\end{lemma}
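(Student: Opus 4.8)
The plan is to follow the pattern of the proof of Lemma \ref{lem.globalQ3}: first establish that $X(\eta)\to\infty$ as $\eta\to\infty$, and then identify the $\omega$-limit of the orbit on the Poincar\'e hypersphere with the critical point $Q_1$. Throughout I would work under the assumption (harmless for the intended applications) that $X(\eta)>0$ for every $\eta$; since the plane $\{X=0\}$ is invariant for \eqref{PSsyst}, the only alternative is that the whole orbit lies in that plane, which is not the situation in which this lemma will be used.

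First I would record two monotonicities. On $(\eta_*,\infty)$ one has $-(N-2)/m<Y<0<2/(m-1)$, so $\dot X=X(2-(m-1)Y)>0$; hence $X$ is increasing on $(\eta_*,\infty)$ and $X(\eta)\to X_\infty\in(0,+\infty]$. Combining the first and third equations of \eqref{PSsyst} shows that $\rho:=Z/X\geq0$ satisfies $\dot\rho=(p-1)Y\rho\leq0$ on $(\eta_*,\infty)$, so $\rho$ is non-increasing there and $\rho(\eta)\to\rho_\infty\in[0,\rho(\eta_*)]$. To see that $X_\infty=+\infty$, I would argue by contradiction: if $X_\infty<\infty$ then $Z=\rho X$ is bounded as well, so the whole orbit is bounded, and \cite[Lemma 2.9]{IL13} applied to the monotone bounded function $X$ produces a sequence $\eta_j\to\infty$ with $\dot X(\eta_j)\to0$, that is, $X(\eta_j)(2-(m-1)Y(\eta_j))\to0$; since $X(\eta_j)\to X_\infty>0$ this forces $Y(\eta_j)\to2/(m-1)>0$, contradicting $Y(\eta)<0$.

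With $X\to\infty$ established, I would pass to the chart \eqref{change2}, namely $(x,y,z)=(1/X,Y/X,Z/X)$, and note that $\eta\to\infty$ corresponds to $\eta_1\to\infty$ because $d\eta_1/d\eta=X\to\infty$. Since $Y$ is bounded, along the orbit $x\to0$, $y=Y/X\to0$ and $z=\rho\to\rho_\infty$, so the orbit converges, as $\eta_1\to\infty$, to the critical point $(0,0,\rho_\infty)$ of the system \eqref{PSinf1}. If $\rho_\infty=0$ this point is $Q_1$ and the trajectory connects to $Q_1$, as desired; the core of the argument is therefore to rule out $\rho_\infty>0$. In that case $(0,0,\rho_\infty)$ is the critical point $Q_\gamma$ with $\kappa=\rho_\infty\in(0,\infty)$, and the orbit, lying in $\{x>0\}$ for every finite time, would connect to $Q_\gamma$ from the positive part of the phase space. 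For $\kappa\neq1$ this is impossible by Lemma \ref{lem.Qg}, which asserts that no orbit reaches such a $Q_\gamma$ from $\{x>0\}$; for $\kappa=1$, i.e. $\gamma=\gamma_0$, the same lemma shows that the unique orbit entering $Q_{\gamma_0}$ from the positive part of the phase space is contained in the plane $\{Y=0\}$ (it is the one carrying the constant profile of $U_{*}$), which contradicts $Y(\eta)<0$ on $(\eta_*,\infty)$. Hence $\rho_\infty=0$ and the trajectory connects to $Q_1$.

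I expect the main obstacle to be this last step. The orbit converges to a point on the equator of the Poincar\'e hypersphere, and one must be careful (i) that it converges to a single critical point rather than to an entire arc of them — which is precisely what the monotonicity of $Z/X$ buys — and (ii) that the resonant value $\kappa=1$, that is, the critical point $Q_{\gamma_0}$ attached to the constant solution $U_{*}$, is excluded; this is where the strict inequality $Y<0$ in the hypothesis is genuinely needed and where the delicate local analysis of Lemma \ref{lem.Qg} is indispensable.
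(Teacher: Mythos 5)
Your proof is correct, and it takes a genuinely different route from the paper's. The paper fixes $\eta_*$ large enough that Lemma \ref{lem.maxmin} applies, splits the analysis according to whether the orbit reaches the plane $\{Y=-2/(p-m)\}$ (Lemma \ref{lem.tang} makes this dichotomy clean), and in the main subcase inverts $X(\eta)$ to derive $Z'(X)\le K_{\epsilon}Z/X$ with $K_{\epsilon}<1$, hence $Z\le CX^{K_{\epsilon}}$ and $Z/X\to0$; Lemma \ref{lem.Qg} is invoked only in the borderline subcase $Y(\eta)\to0$. You instead use the exact identity $\frac{d}{d\eta}\left(Z/X\right)=(p-1)Y\,(Z/X)$, so $Z/X$ is non-increasing and converges to some $\rho_\infty\ge0$; combined with $X\to\infty$ (your detour through \cite[Lemma 2.9]{IL13} is valid but unnecessary, since $Y<0$ gives $\dot X>2X$ and hence exponential growth), the orbit converges in the chart \eqref{change2} to $(0,0,\rho_\infty)$, and $\rho_\infty>0$ is excluded by Lemma \ref{lem.Qg}, with the resonant value $\kappa=1$ ruled out because the unique orbit entering $Q_{\gamma_0}$ from $\{x>0\}$ lies in $\{Y=0\}$, incompatible with $Y<0$. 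Your argument is shorter and more unified, avoids Lemmas \ref{lem.tang} and \ref{lem.maxmin} entirely, and is actually more careful than the paper at the point $\kappa=1$, which the paper's own $Y_\infty=0$ subcase glosses over; the trade-off is that it rests entirely on the center-manifold analysis of Lemma \ref{lem.Qg}, and to make that appeal airtight you should state explicitly that the linearization at $Q_{\gamma}$ has no negative eigenvalues, so an orbit converging to $Q_{\gamma}$ forward in time must lie on a center manifold, all of which are contained in $\{x=0\}$ when $\kappa\ne1$ — whereas the paper's differential inequality yields the decay of $Z/X$ in the main subcase without any reference to $Q_{\gamma}$. Your standing assumption $X>0$ is harmless and is implicitly made in the paper as well: an orbit in the invariant plane $\{X=0\}$ cannot connect to $Q_1$ (by Lemma \ref{lem.X0} such orbits are attracted to $P_2$), so the lemma is only meaningful, and only used, for orbits with $X>0$.
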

\begin{proof}
We work on the interval $(\eta_*,\infty)$. Since $Y(\eta)<0$ for any $\eta>\eta_*$, we infer from the first equation of the system \eqref{PSsyst} that $\dot{X}(\eta)>2X(\eta)>0$ for any $\eta>\eta_*$, which implies that $X(\eta)$ is increasing on $(\eta_*,\infty)$ and $X(\eta)\to\infty$ as $\eta\to\infty$. By eventually increasing $\eta_*$ such that we also have $X(\eta_*)>2(N-2)/(p-1)$, it follows from Lemma \ref{lem.maxmin} that there is no local maximum for $Y(\eta)$ such that $-2/(p-m)<Y(\eta)<0$ for any $\eta>\eta_*$. We have thus two possible situations:

$\bullet$ either $-2/(p-m)<Y(\eta)<0$ for any $\eta\in(\eta_*,\infty)$. In this case, according to Lemma \ref{lem.maxmin}, the mapping $\eta\mapsto Y(\eta)$ is either decreasing on $(\eta_*,\infty)$, or it has a minimum $\eta^*\in(\eta_*,\infty)$ and then it stays increasing on $(\eta^*,\infty)$, while the third equation in the system \eqref{PSsyst} ensures that $Z$ is increasing on $(\eta_*,\infty)$. We thus infer that there are limits $Y_{\infty}\in[-2/(p-m),0]$ and $Z_{\infty}>0$ of $Y(\eta)$, respectively $Z(\eta)$ as $\eta\to\infty$. If the limit of $Y(\eta)$ as $\eta\to\infty$ is negative, then there are $\epsilon>0$ and $\eta_{\epsilon}\in\real$ such that $Y(\eta)<-\epsilon$ for any $\eta\in(\eta_{\epsilon},\infty)$. Since $X$ is a monotone function of $\eta\in(\eta_{\epsilon},\infty)$ on the given trajectory, we can invert the dependence $X(\eta)$ on the interval $(\eta_{\epsilon},\infty)$ to get a well-defined function $\eta=\eta(X)$ and compose this inverse with $Z(\eta)$ to obtain a well-defined dependence $Z=Z(X)$ along the given trajectory. The inverse function theorem then easily entails that
\begin{equation}\label{interm19}
Z'(X)=\frac{Z(2+(p-m)Y)}{X(2-(m-1)Y)}<\frac{Z}{X}\frac{2-(p-m)\epsilon}{2+(m-1)\epsilon}=K_{\epsilon}\frac{Z}{X},
\end{equation}
with $K_{\epsilon}<1$. By comparing with the solutions of the differential equation given by exact equality in \eqref{interm19} and then undoing the inversion with respect to $\eta$, we find that $Z(\eta)<CX(\eta)^{K_{\epsilon}}$ for some $C>0$, for any $\eta\in(\eta_{\epsilon},\infty)$. In particular, and recalling also that $-(N-2)/m<Y(\eta)<0$ for any $\eta\in\real$, we arrive to the conclusion that
\begin{equation}\label{interm20}
\lim\limits_{\eta\to\infty}\frac{Z(\eta)}{X(\eta)}=0=\lim\limits_{\eta\to\infty}\frac{Y(\eta)}{X(\eta)},
\end{equation}
which, together with the fact that $X(\eta)\to\infty$ as $\eta\to\infty$, shows that the trajectory $l_C$ enters the critical point $Q_1$. In the case when $Y(\eta)\to0$ as $\eta\to\infty$, we can proceed as in \eqref{interm19} but only to deduce that $Z(\eta)\leq CX(\eta)$ for some $C>0$ and as $\eta\to\infty$. It then follows that the given trajectory either enters $Q_1$ (if $Z(\eta)/X(\eta)\to0$ as $\eta\to\infty$) or approaches some critical points $Q_{\gamma}$. But the latter is not possible, as it contradicts the outcome of Lemma \ref{lem.Qg}.

$\bullet$ or there exists $\tilde{\eta}\in(\eta_*,\infty)$ such that $Y(\tilde{\eta})=-2/(p-m)$ and $X(\tilde{\eta})>2(N-2)/(p-1)$. We infer from Lemmas \ref{lem.tang} and \ref{lem.maxmin} that
$$
-\frac{N-2}{m}<Y(\eta)\leq-\frac{2}{p-m}, \qquad \eta\in(\tilde{\eta},\infty),
$$
whence by the third equation in the system \eqref{PSsyst}, $Z$ is non-increasing in the interval $(\tilde{\eta},\infty)$ and thus there exists
$$
Z_{\infty}=\lim\limits_{\eta\to\infty}Z(\eta)\in[0,Z(\tilde{\eta})).
$$
Since $X(\eta)\to\infty$ as $\eta\to\infty$, we deduce again that \eqref{interm20} holds true and thus our given trajectory connects to the critical point $Q_1$.
\end{proof}
We end these preparations by a result establishing a lower bound for the trajectories $(l_C)_{C\in[1,\infty]}$.
\begin{lemma}\label{lem.lower}
The trajectories $(l_C)_{C\in[1,\infty]}$ on the unstable manifold of $P_0$ lie above the parabolic cylinder 
\begin{equation}\label{cyl2}
\mathcal{H}=\left\{(X,Y,Z)\in\real^3: Z=-(N-2)Y-mY^2, \ X\geq0, \ -\frac{2}{p-m}\leq Y\leq0\right\}
\end{equation}
\end{lemma}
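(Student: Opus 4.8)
The plan is to track the sign of the function $V(\eta):=Z(\eta)+(N-2)Y(\eta)+mY(\eta)^2$ along each orbit $l_C$, $C\in[1,\infty]$: the parabolic cylinder $\mathcal{H}$ is precisely $\{V=0\}$, and lying above it means $Z\ge-(N-2)Y-mY^2$, i.e. $V\ge0$. The identity that makes everything work is that, by the second equation of \eqref{PSsyst}, $\dot Y=X\left(1+\tfrac{p-m}{2}Y\right)-V$; in particular on $\mathcal{H}$ one has $\dot Y=X\left(1+\tfrac{p-m}{2}Y\right)\ge0$ whenever $X\ge0$ and $-\tfrac{2}{p-m}\le Y\le0$.

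First I would compute the direction of the flow of \eqref{PSsyst} across $\mathcal{H}$. Writing $\mathcal{H}=\{\Phi=0\}$ with $\Phi(X,Y,Z)=Z+(N-2)Y+mY^2$ and $\nabla\Phi=(0,(N-2)+2mY,1)$, one finds, on $\mathcal{H}$,
$$\big\langle\nabla\Phi,(\dot X,\dot Y,\dot Z)\big\rangle=\big[(N-2)+2mY\big]\,X\Big(1+\tfrac{p-m}{2}Y\Big)+(-Y)\big[(N-2)+mY\big]\big[2+(p-m)Y\big].$$
For $X\ge0$ and $-\tfrac{2}{p-m}\le Y\le0$ every factor is non-negative: $-Y\ge0$, $1+\tfrac{p-m}{2}Y\ge0$ and $2+(p-m)Y\ge0$ are immediate, while $(N-2)+2mY>0$ and $(N-2)+mY>0$ throughout this slab follow from $p>p_s$ (they are smallest at $Y=-\tfrac{2}{p-m}$, where they equal $\tfrac{p(N-2)-m(N+2)}{p-m}$ and $\tfrac{p(N-2)-mN}{p-m}$, respectively). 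Hence this scalar product is $\ge0$, so trajectories can only cross $\mathcal{H}$ from below to above inside the slab $-\tfrac{2}{p-m}\le Y\le0$; moreover it vanishes only at $P_0$ and along the line $\ell=\{Y=-\tfrac{2}{p-m},\ Z=Z_0\}$, which is exactly the explicit orbit \eqref{stat.orbit}.

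Next I would verify the starting position. On $l_C$ near $P_0$ the local expansion behind \eqref{beh.P0} (namely $Z\sim CX$ and $Y\sim(1-C)X/N$ from \eqref{interm1}--\eqref{interm2}) gives $V\sim\tfrac{N-2+2C}{N}\,X>0$ for every finite $C\ge1$, while on $l_\infty\subset\{X=0\}$ the unstable eigendirection at $P_0$ gives $Z\sim-NY$ with $Y<0$, hence $V\sim-2Y>0$. So each $l_C$, $C\in[1,\infty]$, starts (weakly) above $\mathcal{H}$. For finite $C$ the claim then follows by combining the transversality computation with Lemma \ref{lem.tang}: since $l_C$ differs from the orbit \eqref{stat.orbit}, it can never reach a point of $\ell$ (every point of $\ell$ has $Y=-\tfrac{2}{p-m}$ and $\dot Y=0$, so uniqueness forces any orbit through it to coincide with \eqref{stat.orbit}), whence $V$ cannot return to $0$ while $Y\in[-\tfrac{2}{p-m},0]$. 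For $l_\infty$ one uses instead Lemma \ref{lem.X0}: in the invariant plane $\{X=0\}$ the surface $\mathcal{H}$ is exactly the isocline $\{\dot Y=0\}$ of the reduced system \eqref{PSsystX0}, $l_\infty$ leaves $P_0$ above it, the flow crosses it upward for $-\tfrac{2}{p-m}<Y<0$, and $l_\infty$ runs monotonically in $Y$ from $0$ down to $-\tfrac{2}{p-m}$ while connecting $P_0$ to $P_2$.

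The delicate point, and the one I expect to need the most care, is the behaviour at the lower face of the slab: the transversality argument only prevents an $l_C$ from leaving the upper side of $\mathcal{H}$ through the interior, so one must still exclude that some $l_C$ re-enters the slab from $\{Y<-\tfrac{2}{p-m}\}$ on the wrong side of $\mathcal{H}$. Here the observation that $\dot Y=-V$ at $Y=-\tfrac{2}{p-m}$ shows that such a crossing can only be downward and only when $V>0$; combined with the absence of local minima of $Y$ in $\mathcal{R}_2$ (Lemma \ref{lem.maxmin}) and the fact that once $Y<-\tfrac{N-2}{m}$ the orbit runs straight into $Q_3$ (Lemma \ref{lem.globalQ3}), one checks that an $l_C$ that has dropped below $Y=-\tfrac{2}{p-m}$ never climbs back into the slab, so the statement holds along the whole of each $l_C$.
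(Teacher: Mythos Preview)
Your core argument---computing the flow of \eqref{PSsyst} across $\mathcal{H}$ via the normal $(0,(N-2)+2mY,1)$, checking each factor is non-negative on the slab because $p>p_s$, and verifying from the local expansion near $P_0$ that the $l_C$ start on the correct side---is exactly the paper's proof. The paper stops there, concluding only that the $l_C$ cannot cross $\mathcal{H}$ \emph{before} first hitting $\{Y=-2/(p-m)\}$, and this first-passage statement is all that is invoked later.

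You go further and raise a genuine point the paper glosses over: if some $l_C$ exits the slab through $\{Y=-2/(p-m)\}$ and later re-enters it, your own observation $\dot Y|_{Y=-2/(p-m)}=-V$ shows that re-entry ($\dot Y>0$) forces $V<0$, so the orbit would re-enter \emph{below} $\mathcal{H}$, contradicting the lemma as literally stated. Your proposed fix, however, is not quite complete: you appeal to Lemma~\ref{lem.maxmin} to forbid local minima of $Y$ in $\mathcal{R}_2$, but that lemma carries the hypothesis $X\ge 2(N-2)/(p-1)$, and nothing in your argument guarantees this bound along the relevant portion of an $l_C$ (indeed for $C$ large the orbit stays close to $\{X=0\}$ for a long time). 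So this last paragraph would need more work. For the paper's purposes you can simply drop it and state the conclusion in first-passage form, which is both what the paper proves and what the application needs.
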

\begin{proof}
It is easy to see from Lemma \ref{lem.P0} that the orbits lying on the unstable manifold of $P_0$ have the local behavior 
$$
Y(\eta)\sim\frac{X(\eta)-Z(\eta)}{N}, \qquad {\rm as} \ \eta\to-\infty,
$$
or equivalently,
$$
Z(\eta)\sim X(\eta)-NY(\eta)\geq-NY(\eta)\geq-(N-2)Y(\eta)-mY(\eta)^2, \qquad {\rm as} \ \eta\to-\infty,
$$
hence they go out in a neighborhood of $P_0$ above the surface $\mathcal{H}$ for $C\in[1,\infty]$, due to the non-positivity of $Y(\eta)$. Moreover, the direction of the flow of the system \eqref{PSsyst} across the surface \eqref{cyl2}, with normal direction $\overline{n}=(0,(N-2)+2mY,1)$, is given by the sign of the expression 
$$
H(X,Y)=\frac{1}{2}(2+(p-m)Y)\big[X(2mY+N-2)-2Y(mY+N-2)\big].
$$
Notice that, on the surface $\mathcal{H}$ defined in \eqref{cyl2}, we readily have $2+(p-m)Y\geq0$, $-2Y(mY+N-2)\geq0$ and 
$$
2mY+N-2\geq-\frac{4m}{p-m}+N-2=\frac{(N-2)(p-p_s)}{p-m}>0,
$$
since we are working with $p>p_s$. Thus $H(X,Y)\geq0$ on $\mathcal{H}$ and we conclude that the orbits $(l_C)_{C\in[1,\infty]}$ cannot cross the surface $\mathcal{H}$ before, at least, crossing the plane $\{Y=-2/(p-m)\}$, as claimed.
\end{proof}
We now move towards the proof of Theorem \ref{th.1}. In order to fix the terminology, we say that a trajectory $(X(\eta),Y(\eta),Z(\eta))$ of the system \eqref{PSsyst} \textbf{has an oscillation} with respect to the plane $\{Y=0\}$ if there exist $\eta_1<\eta_2\in\real$ such that $Y(\eta_1)=Y(\eta_2)=0$ and $Y(\eta)>0$ for $\eta\in(\eta_1,\eta_2)$. This is well-defined according to Lemma \ref{lem.tang} and extends in an obvious way to the definition of $K$ oscillations with respect to the plane $\{Y=0\}$ for any natural number $K\geq2$. 

Let us recall next how the Lepin exponent $p_L$ comes into play, according to \cite[Theorem 12.2]{GV97}. Starting from \eqref{SSODE}, we introduce the function $g(\xi)$ by
\begin{equation}\label{change0}
f^m(\xi)=\xi^{-2m/(p-m)}g(\xi)
\end{equation}
and recalling the independent variable $\eta=\ln\,\xi$ already employed in the system \eqref{PSsyst}, we deduce the differential equation solved by $g(\eta)$
\begin{equation}\label{SSODE2}
g''(\eta)+Ag'(\eta)-\frac{mB}{p-m}g(\eta)+g^{p/m}(\eta)-\frac{\beta}{m}(g^{(1-m)/m}g')(\eta)e^{\eta/\beta},
\end{equation}
where $\beta=(p-m)/2(p-1)$ and the coefficients are
\begin{equation}\label{coefs}
A=N-2-\frac{4m}{p-m}, \qquad B=2\left(N-2-\frac{2m}{p-m}\right).
\end{equation}
By the transformation \eqref{change0}, the stationary solution $U_s$ given in \eqref{stat.sol} is mapped into the constant function $G_s(\eta)=c_s^m$. We then linearize the equation \eqref{SSODE2} in a neighborhood of this constant solution by letting 
\begin{equation}\label{interm22}
g(\eta)=c_s^m+h(\eta),
\end{equation}
and in a first approximation one obtains the linear equation in general form 
\begin{equation}\label{SSODE3}
h''(\eta)+Ah'(\eta)+Bh(\eta)-Ce^{\eta/\beta}h'(\eta)=0,
\end{equation}
for $A$, $B$ given in \eqref{coefs} and some $C>0$. All the previous calculations are given in the proof of \cite[Theorem 12.2]{GV97}. Linear equations in the general form \eqref{SSODE3} have been studied by Lepin in \cite{Le89, Le90}, and it is established that for $p<p_L$ the function $h$ solution to \eqref{SSODE3} with the condition $h(\eta)\to C$ as $\eta\to\infty$ has at least 3 zeros. This is the main argument employed in the existence proof for $p_s<p<p_L$ in \cite{GV97}. However, \cite[Lemma 9]{Le90} also establishes that, if $p\geq p_L$, any solution to \eqref{SSODE3} such that $h(\eta)\to1$, and therefore also with the property $h(\eta)\to C$ for any $C>0$ (since \eqref{SSODE3} is a linear equation) as $\eta\to\infty$, has exactly two zeros. We can next state the following technical result translating the outcome of \cite[Lemma 9]{Le90} into trajectories of the system \eqref{PSsyst}.
\begin{lemma}\label{lem.crossP2}
There exists $K_0>0$ sufficiently small such that any trajectory of the system \eqref{PSsyst} entering the critical point $Q_1$ with
$$
\lim\limits_{\eta\to\infty}Z(\eta)=k\in(c_s^{p-m},c_s^{p-m}+K_0),
$$
(according to Lemma \ref{lem.Q1} and the remark at its end) has at least one intersection with the plane $\{Y=-2/(p-m)\}$
\end{lemma}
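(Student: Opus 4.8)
The plan is to pass to the variable $g$ of \eqref{change0}, to read the statement as a claim about non\nobreakdash-monotonicity of $g$, and then to contradict the failure of that claim with Lepin's oscillation count \cite[Lemma 9]{Le90}. First I would record the dictionary. On a trajectory entering $Q_1$ with $Z(\eta)\to k$, the remark after Lemma \ref{lem.Q1} gives $Y(\eta)\to-2/(p-m)$, $X(\eta)\to\infty$, and the associated profile $f$ satisfies \eqref{decay}; hence $g(\eta)=\xi^{2m/(p-m)}f^m(\xi)$ solves \eqref{SSODE2} and tends, as $\eta\to\infty$, to a finite limit $g_\infty$ which is a strictly increasing function of $k$ and which equals $c_s^m$ precisely for the trajectory \eqref{stat.orbit} (the orbit of $U_s$). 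Differentiating the relation $g'(\eta)/g(\eta)=m\bigl(Y(\eta)+\tfrac{2}{p-m}\bigr)$ obtained from \eqref{PSchange}, one sees that the trajectory meets the plane $\{Y=-2/(p-m)\}$ at some finite $\eta$ if and only if $g$ is not monotone on its maximal interval of existence. Finally, with $h:=g-c_s^m$, the zeros of $h$ are the intersections of $f$ with the stationary profile $U_s$ of \eqref{stat.sol}, and, as recalled after \eqref{interm22}, on any interval on which $h$ stays small it solves, up to a quadratic remainder, the linear equation \eqref{SSODE3} studied by Lepin.

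I would then argue by contradiction: assume the trajectory never meets $\{Y=-2/(p-m)\}$. By the local analysis of Section \ref{sec.syst}, a trajectory with $Z_\infty=k>0$ entering $Q_1$ must emanate from $P_0$ along some $l_C$ with $C>0$, or from $Q_5$, or from $Q_2$, or it is the orbit \eqref{stat.orbit}; the last is excluded since its limit value $Z(P_2)$ does not lie in the admissible range of $k$. In each of the remaining cases $Y$ takes values in $(-2/(p-m),\infty)$ near the left endpoint of the trajectory, so the assumption forces $Y(\eta)>-2/(p-m)$ for all $\eta$, hence $g'>0$ throughout; thus $g$ increases strictly from $g\to0$ (the common limiting value of $g$ at all those left endpoints) to $g_\infty$. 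Since $g_\infty>c_s^m$ for $k$ in the admissible range, $h=g-c_s^m$ is monotone and possesses \emph{exactly one} zero.

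To contradict this I would exploit that $K_0$ is small. For $k$ close enough to the limit value of \eqref{stat.orbit}, the trajectory shadows the heteroclinic \eqref{stat.orbit} and $g$ stays within any prescribed $\delta>0$ of $c_s^m$ on an interval $[\eta_1,\infty)$ whose left endpoint one controls in terms of $K_0$. On such an interval the normalized deviation $\tilde h:=h/(g_\infty-c_s^m)$ solves \eqref{SSODE3} up to a remainder that tends to $0$ as $K_0\to0$, and $\tilde h(\eta)\to1$ as $\eta\to\infty$; hence, by continuous dependence, $\tilde h$ converges, locally uniformly in $C^1$, to the solution $h_L$ of \eqref{SSODE3} with unit limit at $+\infty$. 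By \cite[Lemma 9]{Le90}, since $p\ge p_L$, the function $h_L$ has exactly two transversal zeros, and in particular a nondegenerate local minimum at which it is negative; this critical point persists under perturbation, so for $K_0$ small enough $h$ itself is non-monotone on $[\eta_1,\infty)$, contradicting the previous paragraph. This contradiction yields the claimed intersection.

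The hard part will be the last step, and specifically two points within it. One must (i) quantify, as a function of $K_0$, the interval $[\eta_1,\infty)$ on which $g$ remains $\delta$-close to $c_s^m$ — this is exactly where the shadowing of \eqref{stat.orbit} by trajectories entering $Q_1$ has to be made precise, so that $[\eta_1,\infty)$ is eventually large enough to capture the local minimum of $h_L$; and (ii) control the quadratic remainder in the reduction of \eqref{SSODE2} to \eqref{SSODE3} uniformly on that whole interval, despite the exponentially large factor $e^{\eta/\beta}$, so as to guarantee the $C^1_{\mathrm{loc}}$ convergence $\tilde h\to h_L$ and the survival of the nondegenerate critical point for the actual $h$. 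The remaining ingredients — the computation $g'/g=m(Y+2/(p-m))$, the identification of the left endpoints, and the monotonicity argument — are routine bookkeeping with the changes of variables \eqref{PSchange}, \eqref{change0} and the results of Section \ref{sec.syst}.
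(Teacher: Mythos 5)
Your proposal is correct and takes essentially the same route as the paper: pass to $g$ via \eqref{change0}, invoke Lepin's \cite[Lemma 9]{Le90} for the linearized equation \eqref{SSODE3} when $p\geq p_L$, and use that the sign of $Y+\tfrac{2}{p-m}$ governs the monotonicity of $Z$ (equivalently of $g$), with $K_0$ small guaranteeing the linearization is faithful. The only differences are cosmetic: the paper argues directly (exactly two zeros of $h$ mean two crossings of the half-plane $\{Z=Z_0,\ X>0\}$, which force $\dot Z$ to change sign), so your enumeration of possible left endpoints is unnecessary, and the delicate linear-to-nonlinear approximation step that you flag as the hard part is treated at a comparably brief level in the paper's proof.
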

\begin{proof}
We translate in our notation and terminology what the existence of exactly two zeros of solutions $h(\eta)$ to \eqref{SSODE3} means. Observe first that, in terms of profiles, any zero of the function $h(\eta)$ means an intersection of the profile $f(\xi)$ with the stationary solution $U_s$ given in \eqref{stat.sol}. Moreover, we also remark that there is a one-to-one mapping between $(f,\xi)$ and $(X,Z)$ variables, according to \eqref{PSchange}. Indeed, given $(X,Z)$ as in \eqref{PSchange}, we find that
$$
f=\left[\frac{Z}{(p-1)X}\right]^{1/(p-1)}, \qquad \xi=\sqrt{m}[(p-1)X]^{\beta}Z^{(m-1)/2(p-1)}.
$$
Thus, an intersection of a solution to \eqref{SSODE} with the profile $U_s$ means the same values of $(X,Z)$ in the phase space of \eqref{PSsyst}. Since $U_s$ identifies with the orbit \eqref{stat.orbit}, we conclude that a zero of the linearized function $h(\eta)$ solution to \eqref{SSODE3} is seen in the space $(X,Y,Z)$ of the system \eqref{PSsyst} as a point such that $X>0$, $Z=Z_0$, where $Z_0$ is defined in \eqref{stat.orbit}. Thus, \cite[Lemma 9]{Le90} implies that the corresponding trajectories will intersect twice the half-plane $\{Z=Z_0, X>0\}$. Since the third equation of the system \eqref{PSsyst} implies that $Z(\eta)$ is increasing while $Y(\eta)>-2/(p-m)$ and decreasing while $Y(\eta)<-2/(p-m)$, the existence of two intersections with $\{Z=Z_0, X>0\}$ is only possible if the trajectory intersects the plane $\{Y=-2/(p-m)\}$ at least once. Finally, since in the above we work with a linear approximation of the true equation for $h(\eta)$, this stays true for trajectories lying sufficiently close to the constant $c_s^m$ in \eqref{interm22}, that is, for trajectories such that $Z(\eta)\to k\in(Z_0,Z_0+K_0)$ for some $K_0>0$ sufficiently small, as claimed.
\end{proof}

With these are now ready to complete the proof of Theorem \ref{th.1} for $p>p_s$.
\begin{proof}[Proof of Theorem \ref{th.1}]
In view of Lemmas \ref{lem.P0} and \ref{lem.Q1}, we are looking for trajectories of the system \eqref{PSsyst} connecting the critical points $P_0$ and $Q_1$. We perform thus a shooting on the unstable manifold of the critical point $P_0$, recalling that the trajectories contained in its unstable manifold form a one-parameter family $(l_C)_{C\geq0}$, according to Lemma \ref{lem.P0}. Notice that $l_1$ is the trajectory \eqref{const.orbit}, and let us denote by $l_{\infty}$ the trajectory contained in the invariant plane $\{X=0\}$, which enters the critical point $P_2$ according to Lemma \ref{lem.X0}. We moreover recall from \eqref{interm2} that the orbits $l_C$ with $C\in(1,\infty)$ all go out into the negative half-space $\{Y<0\}$, that is, on each of them $Y(\eta)<0$ in a neighborhood of $-\infty$.

\medskip 

\noindent \textbf{Proof of Part (a)}. We split the interval $(1,\infty)$ into the following three sets: 
\begin{equation}\label{sets}
\begin{split}
&\mathcal{A}=\left\{C\in(1,\infty): {\rm there \ is} \ \eta_0\in\real, \ -\frac{2}{p-m}<Y(\eta)<0 \ {\rm for \ any} \ \eta\in(-\infty,\eta_0),\right.\\&\left. {\rm and} \ Y(\eta_0)=-\frac{2}{p-m}, \ \dot{Y}(\eta_0)<0\right\},\\
&\mathcal{C}=\left\{C\in(1,\infty): {\rm there \ is} \ \eta_0\in\real, \ -\frac{2}{p-m}<Y(\eta)<0 \ {\rm for \ any} \ \eta\in(-\infty,\eta_0),\right.\\&\left. {\rm and} \ Y(\eta_0)=0, \ \dot{Y}(\eta_0)>0\right\},\\
&\mathcal{B}=(1,\infty)\setminus(\mathcal{A}\cup\mathcal{C}).
\end{split}
\end{equation}
The definition readily imply that $\mathcal{A}$ and $\mathcal{C}$ are open sets. We show next that they are non-empty. 

The most involved part of the proof is to show that $\mathcal{A}$ is non-empty. To this end, we employ the following strategy: assuming for contradiction that $\mathcal{A}=\emptyset$, we build a system of joined tubular neighborhoods of the orbits (in opposite direction to the flow) \eqref{stat.orbit} connecting $P_2$ to $Q_1$, the orbit $l_{\infty}$ connecting $P_0$ to $P_2$ and the orbit contained in the $Y$ axis in the region $\{Y>0\}$ connecting $Q_2$ to $P_0$. We take the neighborhoods as thin as needed in order for a continuum of orbits $(l_C)_{C>C^*}$ for some $C^*>1$ to form a barrier for the trajectories inside the first two of these three tubular neighborhoods. With this construction, we will conclude that there is at least one trajectory connecting $Q_2$ to $Q_1$ for which $Y(\eta)>-2/(p-m)$ for any $\eta\in\real$, contradicting the outcome of Lemma \ref{lem.crossP2}. We next give the details of this construction.

Assume for contradiction, on the one hand, that $\mathcal{A}=\emptyset$ and also $\mathcal{B}=\emptyset$ (otherwise by Lemma \ref{lem.globalQ1} we would already have a solution and the proof would be completed), that is, for any $C\in(1,\infty)$, the trajectory $l_C$ meets first the plane $\{Y=0\}$ before intersecting the plane $\{Y=-2/(p-m)\}$. Then, on the orbit $l_{\infty}$ contained in the plane $\{X=0\}$ we must have $0\leq Y(\eta)<-2/(p-m)$ for any $\eta\in\real$, as otherwise continuity would imply that some $l_C$ with $C$ very large also crosses $\{Y=-2/(p-m)\}$. Pick $\epsilon\in(0,K_0)$ (with $K_0$ chosen so that Lemma \ref{lem.crossP2} holds true) such that \cite[Theorem 5.11]{Shilnikov} describing the local behavior of trajectories near a (generalized) saddle holds true in the neighborhood
\begin{equation}\label{neigh.Q1}
\mathcal{U}=\{0<x<\epsilon, |y|<\epsilon,0<w=xz<\epsilon\}
\end{equation}
of the critical point $Q_1$ in the system \eqref{PSinf1}. Taking into account that $w=xz$ and the change of variable \eqref{change2}, we readily infer that the neighborhood
\begin{equation}\label{neigh.Q1.main}
\begin{split}
\mathcal{V}=&\left\{(X,Y,Z)\in\real^3:X>\max\left\{\frac{1}{\epsilon},\frac{2}{(p-m)\epsilon},\sqrt{\frac{Z_0+\epsilon}{\epsilon}}\right\},\right.\\&\left. -\frac{2}{p-m}<Y<-\frac{2}{p-m}+\epsilon, Z_0-\epsilon<Z<Z_0+\epsilon\right\},
\end{split}
\end{equation}
where $Z_0$ is defined in \eqref{stat.orbit}, is contained in $\mathcal{U}$. Since by Lemma \ref{lem.X0}, the orbit $l_{\infty}$ enters the critical point $P_2$, the local behavior near a hyperbolic saddle (see \cite[Theorem 2.9, Section 2.8]{Shilnikov}), the fact that the only orbit in the unstable manifold of $P_2$ is the line \eqref{stat.orbit} and the assumption that the orbits $l_C$ do not cross the plane $\{Y=-2/(p-m)\}$ entail that there exists $C^*>1$ such that the orbit $l_C$ enters the neighborhood $\mathcal{V}$ defined in \eqref{neigh.Q1.main} for any $C\in(C^*,\infty)$, and then they approach the unstable sector of $Q_1$ analyzed in Lemma \ref{lem.w0} forcing them to intersect first the plane $\{Y=0\}$. We then construct a "tubular" right-neighborhood of the straight line \eqref{stat.orbit} which extends $\mathcal{V}$, namely
\begin{equation*}
\begin{split}
\mathcal{W}=&\left\{(X,Y,Z)\in\real^3: 0\leq X\leq\max\left\{\frac{1}{\epsilon},\frac{2}{(p-m)\epsilon},\sqrt{\frac{Z_0+\epsilon}{\epsilon}}\right\},\right.\\&\left.-\frac{2}{p-m}<Y<-\frac{2}{p-m}+\epsilon, Z_0-\epsilon<Z<Z_0+\epsilon\right\}
\end{split}
\end{equation*}
We complete this tubular neighborhood with a tubular half-neighborhood of the orbit $l_{\infty}$ defined as follows
\begin{equation*}
\mathcal{N}=\left\{(X,Y,Z)\in\real^3: 0\leq X<\epsilon, -(N-2)Y-mY^2<Z<Z_0+\epsilon, -\frac{2}{p-m}\leq Y\leq0\right\}.
\end{equation*}
Moreover, Lemma \ref{lem.lower} ensures that the orbits $l_C$ on the unstable manifold of $P_0$ for $C>1$ sufficiently large will enter and remain in the neighborhood $\mathcal{N}$ up to the region $\mathcal{N}\cap\mathcal{W}$.

\medskip

In the next step, we select our $\epsilon>0$. The assumption by contradiction, together with an application of \cite[Theorem 2.9]{Shilnikov} in a neighborhood of $P_2$ and an application of \cite[Theorem 5.11]{Shilnikov} in a neighborhood of $Q_1$ entail that there is $C^*\in(1,\infty)$ such that the orbits $(l_C)_{C\in[C^*,\infty]}$ form a continuum both in a (fixed) neighborhood $\mathcal{\overline{U}}$ of $P_2$ and a (fixed) neighborhood $\mathcal{\overline{V}}$ of $Q_1$. Lemma \ref{lem.tang} then gives that the distance $d_{C^*}$ between the part of the trajectory $(l_{C^*})$ lying between the two neighborhoods and the plane $\{Y=-2/(p-m)\}$ is positive. Choose $\epsilon<\min\{d_{C^*},K_0\}$. Letting now any plane $\{X=X_0\}$ for $\epsilon<X_0<1/\epsilon$, its intersection with the unstable manifold of $P_0$ describes a continuous curve with one endpoint on the trajectory \eqref{stat.orbit} and the other endpoint one the curve $(l_{C^*})$. Since $\epsilon<d_{C^*}$, we derive that the unstable manifold stemming from $P_0$ splits the ``tubular" neighborhood $\mathcal{W}\cup\mathcal{V}$ into two disjoint parts. A similar argument (eventually choosing $\epsilon$ even smaller) allows to show that the unstable manifold from $P_0$ splits also into two disjoint parts the set $\mathcal{N}$.

\medskip 

Let us shoot now backwards (in the reversed direction of the flow) from $Q_1$ with orbits (denoted by $r_K$) satisfying
\begin{equation}\label{interm23}
(r_K): \quad \lim\limits_{\eta\to\infty}Z(\eta)=K\in[Z_0,Z_0+\epsilon)\subset(Z_0,Z_0+K_0),
\end{equation}
according to Lemma \ref{lem.Q1} and the Remark at its end. By continuity with respect to the parameter $K$ on the two-dimensional center manifold of $Q_1$, we deduce that, for $K-Z_0$ sufficiently small, the orbits $r_K$ will be as close as we wish to the straight line \eqref{stat.orbit} (which corresponds to \eqref{interm23} with $K=Z_0$). Since the flow on the plane $\{Y=-2/(p-m)\}$, with normal vector $(0,1,0)$, is given by the sign of $Z_0-Z$, these orbits $r_K$ lying inside the neighborhood $\mathcal{V}\cup\mathcal{W}$ cannot cross the boundary formed by the unstable manifold of $P_0$ together with the region $\{Y=-2/(p-m), Z\geq Z_0\}$. Hence, they will remain in the neighborhood $\mathcal{V}\cup\mathcal{W}$ until approaching $P_2$ in the sense that for any such $K$ there is $\eta_K\in\real$ such that $(X,Y,Z)(\eta_K)\in\mathcal{W}\cap\mathcal{N}$. The behavior near the saddle $P_2$ \cite[Theorem 2.9]{Shilnikov} applied for $(X,Y,Z)\in\mathcal{W}\cap\mathcal{N}$ implies that these orbits $r_K$ for $K-Z_0$ sufficiently small come from the neighborhood $\mathcal{N}$. Since they cannot intersect neither the unstable manifold of $P_0$, nor the invariant plane $\{X=0\}$ while in $\mathcal{N}$, it follows that they have to stay in $\mathcal{N}$ until reaching the plane $\{Y=0\}$. A similar argument employing the local behavior near a saddle \cite[Theorem 2.9]{Shilnikov} in a neighborhood of $P_0$ included in $\mathcal{N}$ then shows that these orbits $r_K$ as above come from a "tubular" neighborhood of the unique orbit in the stable manifold of $P_0$ in the half-space $\{Y>0\}$, which is contained in the $Y$ axis according to Lemma \ref{lem.P0} and connects the unstable node $Q_2$ to $P_0$. In conclusion, the sequence of neighborhoods and the previous arguments allow us to find trajectories $r_K$ for $K$ sufficiently close to $Z_0$ stemming from $Q_2$ and not crossing at all the plane $\{Y=-2/(p-m)\}$. But this is a contradiction with Lemma \ref{lem.crossP2} and \eqref{interm23}. This contradiction implies that $\mathcal{A}$ is non-empty.

\medskip

On the other hand, the analysis performed in \cite[pp. 191-193]{S4}, based on the study of the linearization of \eqref{SSODE} with respect to the constant solution $f(\xi)=k_*$ introduced in \eqref{const.sol} and the number of zeros of the resulting Kummer series, proves that there is $C_*>1$ such that for any $C\in(1,C_*)$, the trajectory $l_C$ has at least one oscillation with respect to the plane $\{Y=0\}$. This proves that $\mathcal{C}$ is non-empty and it contains a right neighborhood $(1,C_*)$ of $C=1$. Standard topological arguments give then that the set $\mathcal{B}$ is non-empty. Picking any $C\in\mathcal{B}$, since $C\notin\mathcal{A}$ and $C\notin\mathcal{C}$, we infer from Lemma \ref{lem.tang} that $-2/(p-m)<Y(\eta)<0$ for any $\eta\in\real$ along the trajectory $l_C$. Lemma \ref{lem.globalQ1} then entails that the trajectory $l_C$ connects to the critical point $Q_1$. Observe that we have just obtained, by undoing the transformation \eqref{PSchange}, a self-similar profile which is monotone decreasing. Moreover, we deduce by dividing by $x^2$ in \eqref{cmf} in order to go back to variables $(X,Y,Z)$ that the trajectories $l_C$ with $C\in\mathcal{B}$ enter the center manifold of $Q_1$ with $Z>Z_0$, where $Z_0$ is defined in \eqref{stat.orbit}, hence $f(\xi)\sim C\xi^{-2/(p-m)}$ for some $C>c_s$ as $\xi\to\infty$, as claimed. We have plotted in Figure \ref{fig1} the outcome of some numerical experiments showing various trajectories $l_C$ for $C\in\mathcal{A}$, respectively $C\in\mathcal{C}$, according to \eqref{sets}, illustrating the general idea of the proof. We also show how their corresponding profiles behave in the $(\xi,f)$ variables.

\begin{figure}[ht!]
  \begin{center}
  \subfigure[Various trajectories in the sets $\mathcal{A}$ and $\mathcal{C}$]{\includegraphics[width=7.5cm,height=6cm]{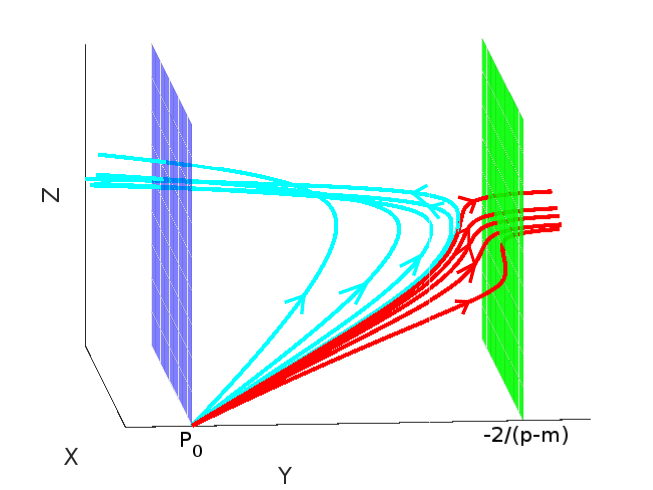}}
  \subfigure[Profiles corresponding to the previous trajectories]{\includegraphics[width=7.5cm,height=6cm]{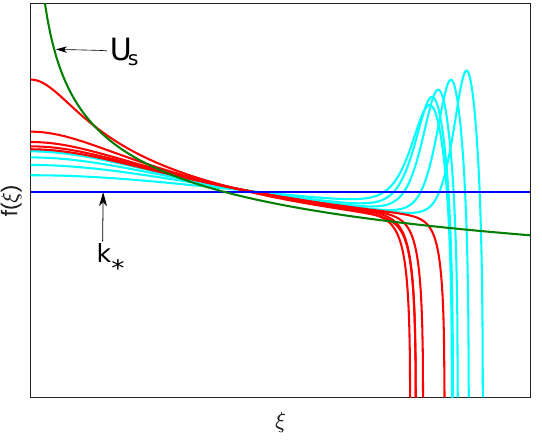}}
  \end{center}
  \caption{Various trajectories in the sets $\mathcal{A}$ and $\mathcal{C}$, seen in the phase space and in profiles. Experiments for $m=2$, $N=20$, $p=10$, with $p_L=3.2$.}\label{fig1}
\end{figure}

Notice also that, in particular, $C_0=\sup\mathcal{C}\in\mathcal{B}$, since $\mathcal{C}$ and $\mathcal{A}$ are open, so that, $l_{C_0}$ is one trajectory connecting $P_0$ to $Q_1$ and lying completely in the negative half-space $\{Y\leq0\}$. This remark is very useful for the proof of Part (b), as shown below.

\medskip 

\noindent \textbf{Proof of Part (b)}. We proceed by induction on the number of oscillations with respect to the plane $\{Y=0\}$. In order to follow better the induction step, we first show how to pass from $K=1$ to $K=2$. To this end, assume that $p>p_s$, $p<2m-1$. Recalling the three sets introduced in \eqref{sets}, we split $\mathcal{C}$ into the following three disjoint sets:
\begin{equation*}
\begin{split}
&\mathcal{A}_1=\left\{C\in\mathcal{C}: l_C \ {\rm has \ at \ most \ one \ oscillation \ and \ there \ is} \ \eta_0\in\real, Y(\eta_0)<-\frac{N-2}{m}\right\},\\
&\mathcal{C}_1=\{C\in\mathcal{C}: l_C \ {\rm has \ at \ least \ two \ oscillations \ with \ respect \ to} \ \{Y=0\}\},\\
&\mathcal{B}_1=\mathcal{C}\setminus(\mathcal{A}_1\cup\mathcal{C}_1).
\end{split}
\end{equation*}
On the one hand, Lemma \ref{lem.globalQ3} implies that the orbit $l_C$ enters the stable node $Q_3$ for any $C\in\mathcal{A}_1$, and thus $\mathcal{A}_1$ is open. We next prove that $\mathcal{A}_1$ is non-empty. Recall that $C_0=\sup\mathcal{C}\in\mathcal{B}$, as explained at the end of the previous step and the trajectory $l_{C_0}$ connects $P_0$ to $Q_1$ fully contained in the half-space $\{Y\leq0\}$. The continuity with respect to the parameter $C$ on the unstable manifold of $P_0$ shows that there is $\delta>0$ such that the orbits $l_C$ for $C\in(C_0-\delta,C_0)$ enter the neighborhood $\mathcal{U}$ of $Q_1$ defined in \eqref{neigh.Q1} for which the local behavior near a generalized saddle \cite[Theorem 5.11]{Shilnikov} applies and, moreover, since $C_0=\sup\mathcal{C}$, there exists $C\in(C_0-\delta,C_0)\cap\mathcal{C}$. Lemma \ref{lem.w0} together with the definition of the set $\mathcal{C}$ in \eqref{sets} imply that the orbit $l_C$ with $C\in(C_0-\delta,C_0)\cap\mathcal{C}$ has exactly one oscillation with respect to the plane $\{Y=0\}$ which occurs before crossing the plane $\{Y=-2/(p-m)\}$, that is, $C\in\mathcal{A}_1$ and thus $\mathcal{A}_1$ is non-empty. On the other hand, since $p<2m-1$, the analysis performed in \cite[pp. 191-193]{S4} implies the non-emptiness of the set $\mathcal{C}_1$, while its mere definition and Lemma \ref{lem.tang} show that $\mathcal{C}_1$ is an open set. We then infer that $\mathcal{B}_1$ is non-empty and in particular $C_1:=\sup\mathcal{C}_1\in\mathcal{B}_1$ by the fact that both $\mathcal{A}_1$ and $\mathcal{C}_1$ are open sets. Thus, the trajectory $l_{C_1}$ has exactly one oscillation with respect to the plane $\{Y=0\}$ (since $C_1\notin\mathcal{C}_1$) and on this trajectory, $-(N-2)/m<Y(\eta)<0$ for $\eta\in(\eta_*,\infty)$, where $\eta_*=\sup\{\eta: Y(\eta)=0\}$. We infer from Lemma \ref{lem.globalQ1} that $l_{C_1}$ connects $P_0$ and $Q_1$ and has exactly one oscillation with respect to $\{Y=0\}$, as desired.

\medskip

This allows us to continue the induction step exactly in the same way. More precisely, letting $K\geq3$, $p>p_s$ such that $p<(Km-1)/(K-1)$, and assuming that we have already proved in the step $K$ the existence of at least $K-1$ different self-similar solutions in the form \eqref{SSS} (having exactly one, two,..., $K-1$ oscillations with respect to the plane $\{Y=0\}$), at the step $K+1$ we split the previous set $\mathcal{C}_{K-1}$ (of parameters $C$ for which the orbit $l_C$ has at least $K$ oscillations) into three sets as follows:
\begin{equation*}
\begin{split}
&\mathcal{A}_K=\left\{C\in\mathcal{C}_{K-1}: l_C \ {\rm has \ exactly} \ K \ {\rm oscillations \ and \ there \ is} \ \eta_0\in\real, Y(\eta_0)<-\frac{N-2}{m}\right\},\\
&\mathcal{C}_K=\{C\in\mathcal{C}_{K-1}: l_C \ {\rm has \ at \ least} \ K+1 \ {\rm oscillations}\},\\
&\mathcal{B}_K=\mathcal{C}_{K-1}\setminus(\mathcal{A}_K\cup\mathcal{C}_K).
\end{split}
\end{equation*}
Showing that $\mathcal{A}_{K}$ and $\mathcal{C}_K$ are both non-empty and open and then characterizing an element in $\mathcal{B}_K$ follows exactly in the same way as in the previous paragraph, and we omit the details which are identical to the ones above. We have plotted in Figure \ref{fig2} the outcome of some numerical experiments showing trajectories $l_C$ with different numbers of oscillations, as analyzed in the current proof. We also plot their profiles in the $(\xi,f)$ variables, where the oscillations are seen with respect to the constant profile $f(\xi)=k_*$ of the solution \eqref{const.sol}.

\begin{figure}[ht!]
  \begin{center}
  \subfigure[Various trajectories with different numbers of oscillations]{\includegraphics[width=7.5cm,height=6cm]{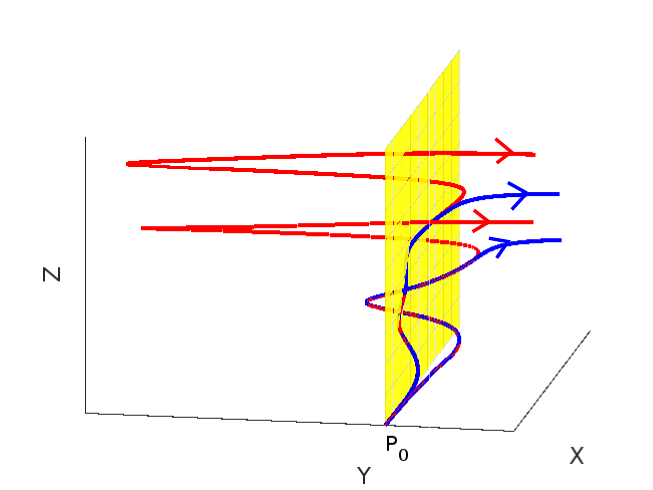}}
  \subfigure[Profiles corresponding to the previous trajectories]{\includegraphics[width=7.5cm,height=6cm]{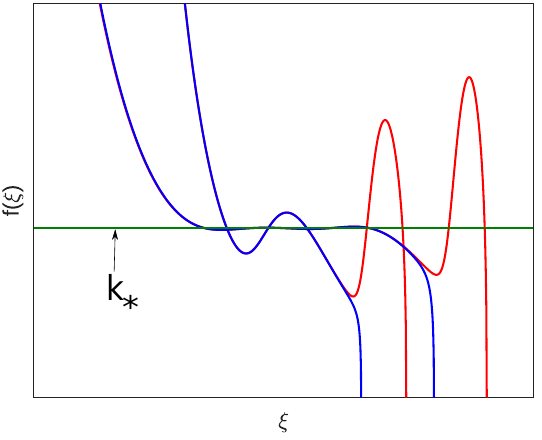}}
  \end{center}
  \caption{Various trajectories with oscillations, seen in the phase space and in profiles. Experiments for $m=2$, $N=100$, $p=2.2$, with $p_L\approx2.133$.}\label{fig2}
\end{figure}

Note in the end that solving with respect to $N$ the inequation $p_L=p_L(N)<(Km-1)/(K-1)$ leads to \eqref{LepK}, as claimed.
\end{proof}

\bigskip

\noindent \textbf{Acknowledgements} R. G. I. and A. S. are partially supported by the Project PID2020-115273GB-I00 and by the Grant RED2022-134301-T funded by MCIN/AEI/10.13039/ \\ 501100011033 (Spain).

\bibliographystyle{plain}

\end{document}